\numberwithin{equation}{section}
\renewcommand{\marginpar}[2][]{}
\newcommand{\GCH}{{\rm GCH}}
\newcommand{\SCH}{{\rm SCH}}
\newcommand{\ZFC}{{\rm ZFC}}
\newcommand{\ORD}{\mathop{{\rm ORD}}}
\renewcommand{\emptyset}{\varnothing}
\renewcommand{\P}{{\mathbb P}}
\newcommand{\Q}{{\mathbb Q}}
\newcommand{\K}{{\mathbb K}}
\newcommand{\Ult}{\mathop{\rm Ult}}
\newcommand{\forces}{\Vdash}
\newcommand{\restrict}{\upharpoonright}
\newcommand{\<}{\langle}
\renewcommand{\>}{\rangle}
\newcommand{\elemsub}{\prec}
\newcommand{\st}{:}
\newcommand{\defn}{\mathop{\rm def}}
\newcommand{\ot}{\mathop{\rm ot}\nolimits}
\newcommand{\cf}{\mathop{\rm cf}}
\newcommand{\id}{\mathop{\rm id}}
\newcommand{\crit}{\mathop{\rm crit}}
\newcommand{\rk}{\mathop{\rm rk}}
\newcommand{\NS}{{\mathop{\rm NS}}}
\newcommand{\WC}{{\mathop{\rm WC}}}
\newcommand{\Refl}{{\mathop{\rm Refl}}}
\newcommand{\NWC}{{\mathop{\rm NWC}}}
\newcommand{\NSS}{{\mathop{\rm NSS}}}
\newcommand{\Sk}{{\mathop{\rm Sk}}}
\newcommand{\NUP}{{\mathop{\rm NUP}}}
\newcommand{\wcdiamond}{\diamondsuit^{\text{wc}}}
\newcommand{\diagonalunion}{\bigtriangledown}
\renewcommand{\and}{\mathop{\&}}
\newtheorem{theorem}{Theorem}[section]
\newtheorem{lemma}[theorem]{Lemma}
\newtheorem{corollary}[theorem]{Corollary}
\newtheorem{proposition}[theorem]{Proposition}
\newtheorem*{theorem12}{Theorem 1.2}
\newtheorem*{theorem14}{Theorem 1.4}
\theoremstyle{definition}
\newtheorem{question}[theorem]{Question}
\newtheorem{remark}[theorem]{Remark}
\newtheorem{definition}[theorem]{Definition}
\thanks{The author would like to thank Hiroshi Sakai for pointing out that Schanker \cite{MR2989393} had already answered the previous version of Question \ref{question_GCH_and_near_supercompactness} and for clarifying several related issues. The author also thanks Piotr Koszmider and Moti Gitik for considering several questions.}
\subjclass[2000]{03E35, 03E55}
\date{\today}
\begin{document}


\title{Characterizations of the weakly compact ideal on $P_\kappa\lambda$}

\author[Brent Cody]{Brent Cody}
\address[Brent Cody]{ 
Virginia Commonwealth University,
Department of Mathematics and Applied Mathematics,
1015 Floyd Avenue, PO Box 842014, Richmond, Virginia 23284, United States
} 
\email[B. ~Cody]{bmcody@vcu.edu} 
\urladdr{http://www.people.vcu.edu/~bmcody/}

\begin{abstract}
Hellsten \cite{MR2026390} gave a characterization of $\Pi^1_n$-indescribable subsets of a $\Pi^1_n$-indescribable cardinal in terms of a natural filter base: when $\kappa$ is a $\Pi^1_n$-indescribable cardinal, a set $S\subseteq\kappa$ is $\Pi^1_n$-indescribable if and only if $S\cap C\neq\emptyset$ for every $n$-club $C\subseteq \kappa$. We generalize Hellsten's characterization to $\Pi^1_n$-indescribable subsets of $P_\kappa\lambda$, which were first defined by Baumgartner. After showing that under reasonable assumptions the $\Pi^1_0$-indescribability ideal on $P_\kappa\lambda$ equals the minimal \emph{strongly} normal ideal $\text{NSS}_{\kappa,\lambda}$ on $P_\kappa\lambda$, and is not equal to $\NS_{\kappa,\lambda}$ as may be expected, we formulate a notion of $n$-club subset of $P_\kappa\lambda$ and prove that a set $S\subseteq P_\kappa\lambda$ is $\Pi^1_n$-indescribable if and only if $S\cap C\neq\emptyset$ for every $n$-club $C\subseteq P_\kappa\lambda$. We also prove that elementary embeddings considered by Schanker \cite{MR2989393} witnessing \emph{near supercompactness} lead to the definition of a normal ideal on $P_\kappa\lambda$, and indeed, this ideal is equal to Baumgartner's ideal of non--$\Pi^1_1$-indescribable subsets of $P_\kappa\lambda$. Additionally, as applications of these results we answer a question of Cox-L\"ucke \cite{MR3620068} about $\mathcal{F}$-layered posets, provide a characterization of $\Pi^m_n$-indescribable subsets of $P_\kappa\lambda$ in terms of generic elementary embeddings, prove several results involving a two-cardinal weakly compact diamond principle and observe that a result of Pereira \cite{MR3640048} yeilds the consistency of the existence of a $(\kappa,\kappa^+)$-semimorasses $\mu\subseteq P_\kappa\kappa^+$ which is $\Pi^1_n$-indescribable for all $n<\omega$.
\end{abstract}

\subjclass[2010]{Primary 03E35; Secondary 03E55}

\keywords{weakly compact, indescribable, large cardinals, supercompact, forcing, two-cardinal}

\maketitle




\section{Introduction}\label{section_introduction}

Recall that a set $W\subseteq\kappa$ is \emph{weakly compact} if and only if for every $A\subseteq\kappa$ there is a transitive $M\models\ZFC^-$ with $\kappa,A,W\in M$ and $M^{<\kappa}\subseteq M$, there is a transitive $N$ and there is an elementary embedding $j:M\to N$ with critical point $\kappa$ such that $\kappa\in j(W)$. It is well known that $\kappa$ is weakly compact (as a subset of itself) if and only if the collection $\NWC_\kappa=\{X\subseteq\kappa\st \text{$X$ is not weakly compact}\}$ is a normal ideal on $\kappa$, which we refer to as \emph{the weakly compact ideal on $\kappa$}. Baumgartner \cite[Section 2]{MR0540770} showed that assuming $\kappa^{<\kappa}=\kappa$, a set $W\subseteq\kappa$ is weakly compact if and only if it is \emph{$\Pi^1_1$-indescribable}, meaning that for every $\Pi^1_1$-formula $\varphi$ and every $R\subseteq V_\kappa$, if $(V_\kappa,\in, R)\models\varphi$ then there exists $\alpha\in W$ such that $(V_\alpha,\in,R\cap V_\alpha)\models\varphi$. Thus,
\[\NWC_\kappa=\Pi^1_1(\kappa)=_{\defn}\{X\subseteq\kappa\st\text{$X$ is not $\Pi^1_1$-indescribable}\}.\]
Sun \cite{MR1245524} proved that the ideal $\NWC_\kappa$ can be characterized in terms of a natural filter base as follows. A set $C\subseteq\kappa$ is called \emph{$1$-club} if and only if $C\in\NS_\kappa^+$ and whenever $\alpha<\kappa$ is inaccessible and $C\cap\alpha\in\NS_{\alpha}^+$ we have $\alpha\in C$. Sun's characterization of weakly compact sets states that, assuming $\kappa$ is a weakly compact cardinal, a set $W\subseteq\kappa$ is weakly compact if and only if $W\cap C\neq\emptyset$ for every $1$-club $C\subseteq\kappa$. Since a set $S\subseteq\kappa$ is $\Pi^1_0$-indescribable if and only if $\kappa$ is inaccessible and $S\in\NS_\kappa^+$, it follows that for $\kappa$ inaccessible $\NS_\kappa^+=\Pi^1_0(\kappa)^+=\{S\subseteq\kappa\st\text{$S$ is first-order indescribable}\}$ and we can restate Sun's characterization as: for $\kappa$ weakly compact, a set $W\subseteq\kappa$ is weakly compact (or equivalently $\Pi^1_1$-indescribable) if and only if
\begin{align}
(\forall C\in \Pi^1_0(\kappa)^+)((\forall\alpha<\kappa)(C\cap\alpha\in \Pi^1_0(\alpha)^+\implies \alpha\in C)\implies C\cap W\neq\emptyset).\label{equation_sun}
\end{align}

In this article we prove similar results for the weakly compact ideal and the $\Pi^1_n$-indescribability ideals on $P_\kappa\lambda$, which apparently\footnote{Baumgartner's handwritten notes seem to be unavailable.} were first defined by Baumgartner in \cite{Baum}, and have since been studied in \cite{MR808767},  \cite{MR1111312}, \cite{MR1635559}, \cite{MR2846027},  \cite{MR3066742} and \cite{MR3304020}. In \cite{Baum}, Baumgartner defined a notion of $\Pi^m_n$-indescribability for subsets of $P_\kappa\lambda$ using a natural $P_\kappa\lambda$-version of the cumulative hierarchy (see Section \ref{section_indescribability} below), which gives rise to the $\Pi^m_n$-indescribability ideal on $P_\kappa\lambda$
\[\Pi^m_n(\kappa,\lambda)=\{X\subseteq P_\kappa\lambda\st \text{$X$ is not $\Pi^m_n$-indescribable}\}.\] 
Abe \cite{MR1635559} showed that when $P_\kappa\lambda$ is $\Pi^m_n$-indescribable the ideal $\Pi^m_n(\kappa,\lambda)$ is normal. In light of the version of Sun's characterization of weakly compact subsets of $\kappa$ in (\ref{equation_sun}), it seems natural to attempt to give a similar characterization for Baumgartner's notion of $\Pi^1_1$-indescribability for subsets of $P_\kappa\lambda$. We will show that when $\kappa$ is inaccessible, the ideal $\Pi^1_0(\kappa,\lambda)$ of non--$\Pi^1_0$-indescribable (i.e. non--first-order indescribable) subsets of $P_\kappa\lambda$ is equal to the minimal \emph{strongly normal ideal} $\NSS_{\kappa,\lambda}$ of \emph{non strongly stationary} subsets of $P_\kappa\lambda$ (see Section \ref{section_strong_stationarity} below) and is \emph{not} equal to $\NS_{\kappa,\lambda}$ as may be expected. The fact that $\Pi^1_0(\kappa,\lambda)=\NSS_{\kappa,\lambda}$ and the fact that Sun's characterization (\ref{equation_sun}) holds, suggests that the correct notion of ``$1$-club subset of $P_\kappa\lambda$'' needed to generalize Sun's characterization to $P_\kappa\lambda$ should be stated using $\NSS_{\kappa,\lambda}^+$ instead of $\NS_{\kappa,\lambda}^+$. Recall that for $x\in P_\kappa\lambda$ we define $\kappa_x=_{\defn}|x\cap\kappa|$.
\begin{definition}\label{definition_1_club}
We say that $C\subseteq P_\kappa\lambda$ is \emph{$1$-club} if and only if 
\begin{enumerate}
\item $C\in \NSS_{\kappa,\lambda}^+$ and
\item $C$ is \emph{$1$-closed}, that is, for every $x\in P_\kappa\lambda$, if $\kappa_x$ is an inaccessible cardinal and $C\cap P_{\kappa_x}x\in\NSS_{\kappa_x,x}^+$ then $x\in C$.
\end{enumerate}
\end{definition}
\noindent In Section \ref{section_n_clubs}, we generalize Sun's characterization of $\Pi^1_1$-indescribable subsets of $\kappa$ by showing that the notion of $1$-club subset of $P_\kappa\lambda$ in Definition \ref{definition_1_club} can indeed be used to characterize the $\Pi^1_1$-indescribable subsets of $P_\kappa\lambda$. In fact, in Section \ref{section_n_clubs}, we develop a notion of $n$-club subset of $P_\kappa\lambda$ where $n<\omega$ and prove the following.

\begin{theorem}\label{theorem_n_club}
Suppose $\kappa\leq\lambda$ are cardinals with $\lambda^{<\kappa}=\lambda$, $n<\omega$ and $P_\kappa\lambda$ is $\Pi^1_n$-indescribable. Then $S\subseteq P_\kappa\lambda$ is $\Pi^1_n$-indescribable if and only if for all $n$-clubs $C\subseteq P_\kappa\lambda$ we have $S\cap C\neq\emptyset$.
\end{theorem}

In Section \ref{section_n_clubs}, we also prove that Baumgartner's $\Pi^1_1$-indescribable subsets of $P_\kappa\lambda$ can be characterized using elementary embeddings which resemble the usual elementary embeddings witnessing the weak compactness of subsets of $\kappa$. Recall that, for cardinals $\kappa\leq\lambda$, $\kappa$ is \emph{$\lambda$-supercompact} if and only if there is an elementary embedding $j:V\to M$ with critical point $\kappa$ such that $j(\kappa)>\lambda$ and $j"\lambda\in M$. Such embeddings can be assumed to be ultrapowers by normal fine $\kappa$-complete ultrafilters on $P_\kappa\lambda$, in which case $M^\lambda\cap V\subseteq M$. Schanker \cite{MR2989393}  fused the notions of weak compactness and $\lambda$-supercompactness as follows: $\kappa$ is said to be \emph{nearly $\lambda$-supercompact} if for every $A\subseteq\lambda$ there is a transitive $M\models\ZFC^-$ with $\lambda,A\in M$ and $M^{<\kappa}\cap V\subseteq M$, there is a transitive $N$ and an elementary embedding $j:M\to N$ with critical point $\kappa$ such that $j(\kappa)>\lambda$ and $j"\lambda\in N$. As observed by Schanker, it is clear that if $\kappa$ is $\lambda$-supercompact then $\kappa$ is nearly $\lambda$-supercompact and the converse is not true in general; for example, the least cardinal $\kappa$ which is nearly $\kappa^+$-supercopmact is not $\kappa^+$-supercompact. Even though $\kappa$ is supercompact if and only if $\kappa$ is nearly $\lambda$-supercompact for every $\lambda\geq\kappa$, Schanker proved that for any fixed $\lambda\geq\kappa$, $\kappa$ being nearly $\lambda$-supercompact need not imply that $\kappa$ is measurable. For example, Schanker proved that if $\kappa$ is nearly $\kappa^+$-supercompact and $\GCH$ holds, then there is a cofinality-preserving forcing extension in which $\kappa$ remains nearly $\kappa^+$-supercompact and $\GCH$ fails first at $\kappa$. Furthermore, if $\kappa$ is $\lambda$-supercompact and $\GCH$ holds then there is a cofinality-preserving forcing extension \cite{MR3372604} in which $\kappa$ is the least weakly compact cardinal and $\kappa$ is nearly $\lambda$-supercompact.\footnote{$\GCH$ must fail at $\kappa$ is such an extension.}

We prove that the elementary embeddings considered by Schanker in \cite{MR2989393} lead to a normal ideal on $P_\kappa\lambda$ as follows, and indeed this ideal is equal to Baumgartner's ideal $\Pi^1_1(\kappa,\lambda)$. 
\begin{definition}\label{definition_weakly_compact}
We say that a set $W\subseteq P_\kappa\lambda$ is \emph{weakly compact}\footnote{We prefer this terminology to saying that ``$W$ is nearly $\lambda$-supercompact'' because we will prove that $W\subseteq P_\kappa\lambda$ is weakly compact if and only if $W$ is $\Pi^1_1$-indescribable, and thus this terminology conforms to more of the existing literature.} if and only if for every $A\subseteq\lambda$ there is a transitive $M\models\ZFC^-$ with $\lambda,A,W\in M$, a transitive $N$ and an elementary embedding $j:M\to N$ with critical point $\kappa$ such that $j(\kappa)>\lambda$ and $j"\lambda\in j(W)$. The \emph{weakly compact ideal on $P_\kappa\lambda$} is defined to be
\[\NWC_{\kappa,\lambda}=\{X\subseteq P_\kappa\lambda\st \text{$X$ is not weakly compact}\}.\]
\end{definition} 

\begin{theorem}\label{theorem_characterization_embedding}
Suppose $\kappa\leq\lambda$ are cardinals such that $\kappa$ is inaccessible and $\lambda^{<\kappa}=\lambda$. Then a set $W\subseteq P_\kappa\lambda$ is $\Pi^1_1$-indescribable if and only if it is weakly compact.
\end{theorem}

In Section \ref{section_applications}, we provide several applications. In Section \ref{section_cox_lucke}, we answer a question of Cox and L\"ucke \cite{MR3620068}. Before stating the question, let us review some terminology from \cite{MR3620068}. For partial orders $\Q\subseteq\P$, we say that $\Q$ is a \emph{regular suborder of $\P$} if the inclusion map preserves incompatibility and maximal antichains in $\Q$ are also maximal in $\P$. Given a partial order $\P$, we let $\text{Reg}_\kappa(\P)$ denote the collection of all regular suborders of $\P$ of cardinality less than $\kappa$. In \cite{MR3620068}, the authors consider various properties of partial orders that imply $\text{Reg}_\kappa(\P)$ is \emph{large} in a certain sense. For example, suppose $\kappa$ is a regular uncountable cardinal, a partial order $\P$ is called \emph{$\kappa$-stationarily layered} if $\text{Reg}_\kappa(\P)$ is stationary in $P_\kappa\P$. Among other things, Cox-L\"ucke showed \cite[Theorem 1.8]{MR3620068} that such properties can be used to characterize weakly compact cardinals: $\kappa$ is a weakly compact cardinal if and only if every partial order $\P$ satisfying the $\kappa$-chain condition is $\kappa$-stationarily layered. Cox and L\"ucke also consider another notion of \emph{largeness} of $\text{Reg}_\kappa(\P)$: a partial order $\P$ is \emph{$\mathcal{F}$-layered} if it has cardinality at most $\lambda$ and $\{a\in P_\kappa\lambda\st s[a]\in\text{Reg}_\kappa(\P)\}\in \mathcal{F}$ holds for every surjection $s:\lambda\to \P$. Question 7.4 of \cite{MR3620068} states, assuming $(\kappa^+)^{<\kappa}=\kappa^+$, ``Let $\kappa$ be an inaccessible cardinal such that there is a normal filter $\mathcal{F}$ on $P_\kappa\kappa^+$ with the property that every partial order of cardinality $\kappa^+$ that satisfies the $\kappa$-chain condition is $\mathcal{F}$-layered. Must $\kappa$ be a measurable cardinal?'' By generalizing the work of Schanker \cite{MR2989393}, we show that the answer is no by proving the following.

\begin{theorem}\label{theorem_answer}
Suppose $P_\kappa\lambda$ is weakly compact, $\GCH$ holds and $\lambda^{<\lambda}=\lambda$. There is a cofinality-preserving forcing extension $V[G]$ in which  
\begin{enumerate}
\item $(P_\kappa\lambda)^{V[G]}$ is weakly compact and hence the filter $\mathcal{F}=(\NWC_{\kappa,\lambda}^*)^{V[G]}$ is normal and nontrivial,
\item every partial order of cardinality $\lambda$ that satisfies the $\kappa$-c.c. is $\mathcal{F}$-layered,
\item $\kappa$ is not measurable and
\item $\lambda^{<\kappa}=\lambda$.
\end{enumerate} 
\end{theorem}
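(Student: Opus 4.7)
The plan is to adapt Schanker's Cohen-forcing argument from \cite{MR2989393} to the two-cardinal setting. Force over $V$ with $\P = \Add(\kappa,\lambda^{++})$, the poset of $<\kappa$-sized partial functions from $\lambda^{++}\times\kappa$ to $2$. Under $\GCH$ and $\lambda^{<\lambda}=\lambda$, $\P$ has size $\lambda^{++}$, is $<\kappa$-directed closed, and is $\kappa^+$-c.c., so forcing with $\P$ preserves cofinalities, adds no $<\kappa$-sized subsets of $\lambda$, and maintains $\lambda^{<\kappa}=\lambda$; this yields (4). Since $\P$ forces $2^\kappa \geq \kappa^{++}$, a standard covering-theoretic argument shows $\kappa$ cannot remain measurable in $V[G]$ on the strength of our hypothesis alone (weak compactness of $P_\kappa\lambda$ is strictly weaker than the Mitchell-order strength required for a measurable $\kappa$ with $2^\kappa \geq \kappa^{++}$); so, possibly after a small preparatory forcing that trims any extraneous large-cardinal structure of $V$, (3) holds.

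The main technical step is item (1), preservation of the weak compactness of $P_\kappa\lambda$, via a standard lifting argument for the elementary embeddings of Definition \ref{definition_weakly_compact}. Given $W \in (\NWC_{\kappa,\lambda}^+)^{V[G]}$ and $A \subseteq \lambda$ in $V[G]$, fix $\P$-names $\dot W, \dot A \in V$ of hereditary size $\leq \lambda$ (available by the chain condition and $\lambda^{<\kappa}=\lambda$) and encode $\dot W, \dot A, \P$ as a single $A^* \subseteq \lambda$ in $V$. By weak compactness of $P_\kappa\lambda$ in $V$, there is a transitive $M \models \ZFC^-$ with $M^{<\kappa} \cap V \subseteq M$ containing $\lambda, A^*$, together with an elementary embedding $j: M \to N$, $\crit(j) = \kappa$, $j(\kappa) > \lambda$, whose restriction $j"\lambda$ witnesses the appropriate reflection property for the coded $W$. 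Since $\P$ is $<\kappa$-closed and $\kappa^+$-c.c., $M[G] \models \ZFC^-$ and $M[G]^{<\kappa} \cap V[G] \subseteq M[G]$. We build an $N$-generic $G^* \subseteq j(\P)$ containing $j"G$ as a master condition: $j"G$ is a directed subset of $j(\P)$ of size $\lambda$, $j(\P)$ is $<j(\kappa)$-directed closed in $N$, $|N| \leq \lambda$ (so at most $\lambda$ dense sets to meet), and the tail below the master condition is $\lambda^+$-closed. The resulting lift $j^*: M[G] \to N[G^*]$ satisfies $j^*"\lambda = j"\lambda \in j^*(W)$, witnessing that $W$ remains weakly compact in $V[G]$.

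Item (2) then follows from (1) by the standard argument of \cite{MR3620068}. Given a $\kappa$-c.c.\ poset $\mathbb{Q}$ of size $\lambda$ in $V[G]$ and a surjection $s: \lambda \to \mathbb{Q}$, set $Y = \{a \in P_\kappa\lambda : s[a] \notin \text{Reg}_\kappa(\mathbb{Q})\}$; if $Y$ were weakly compact, an embedding $j: M \to N$ with $\mathbb{Q}, s, Y \in M$ and $j"\lambda \in j(Y)$ would yield $j(s)"(j"\lambda) = j"\mathbb{Q}$ as a non-regular suborder of $j(\mathbb{Q})$ in $N$. But the $\kappa$-c.c.\ of $\mathbb{Q}$ means every maximal antichain of $\mathbb{Q}$ has size $<\kappa = \crit(j)$ and is fixed pointwise by $j$, so each such antichain lies inside $j"\mathbb{Q}$, making it a regular suborder of $j(\mathbb{Q})$ --- contradiction. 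Thus $Y \in \NWC_{\kappa,\lambda}$ and $\mathbb{Q}$ is $\mathcal{F}$-layered. The main obstacle will be the detailed master-condition and lifting argument in (1), especially verifying that $j^*$ moves $W$ correctly from only the weak compactness (not $\lambda$-supercompactness) of $P_\kappa\lambda$ in $V$; a secondary subtlety is (3), where the forcing may need to be iterated or modified to guarantee that measurability is destroyed regardless of the precise large-cardinal configuration of $V$.
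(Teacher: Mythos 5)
Your overall architecture (preserve the weak compactness of $P_\kappa\lambda$ by a lifting argument, kill measurability by controlling $2^\kappa$, and derive (2) from (1) exactly as in Lemma \ref{lemma_cox_lucke}) is the same as the paper's, which simply invokes Schanker's preservation theorems from \cite{MR2989393} together with Lemma \ref{lemma_cox_lucke}; your treatment of (2) and (4) is fine. But items (1) and (3) each contain a genuine gap, and for $\lambda\geq\kappa^{++}$ your choice of forcing is provably the wrong one. The forcing $\Add(\kappa,\lambda^{++})$ adds no subsets of cardinals below $\kappa$, so $\GCH$ continues to hold below $\kappa$ in $V[G]$ while $2^\kappa=\lambda^{++}>\kappa^+$; that is, $\GCH$ fails first at $\kappa$. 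However, as recorded in Section \ref{section_cox_lucke}, the usual reflection arguments (\cite[Theorem 4.10(3)]{MR2989393}) show that if $\kappa$ is nearly $\kappa^{++}$-supercompact then $\GCH$ cannot fail first at $\kappa$. Since near $\lambda$-supercompactness implies near $\kappa^{++}$-supercompactness for $\lambda\geq\kappa^{++}$, your forcing \emph{cannot} preserve the weak compactness of $P_\kappa\lambda$ when $\lambda\geq\kappa^{++}$: conclusion (1) provably fails in your $V[G]$. This is precisely why Schanker's argument for $\lambda\geq\kappa^{++}$ goes through the lottery preparation (which violates $\GCH$ cofinally below $\kappa$) and kills measurability not by $\GCH$-reflection but by Hamkins' approximation and cover properties, which force any measurability embedding in the extension to lift a ground-model embedding.

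Even setting that aside (e.g.\ in the case $\lambda=\kappa^+$), your lifting argument for (1) has a gap at the master condition. The set $j"G$ (or the relevant $\lambda$-sized restriction $j"(G\restriction a)$ to the support of the names) is \emph{not an element of} $N$: it codes a filter generic over $V\supseteq N$, and $N$ is only ${<}\lambda$-closed of size $\lambda$. So the fact that $j(\P)$ is ${<}j(\kappa)$-directed closed \emph{in} $N$ gives no lower bound for $j"G$, and $j(\P)$ does not factor so as to isolate the coordinates $j"a\times\kappa$ (conditions of $j(\P)$ have size ${<}j(\kappa)\geq\kappa$ and may meet that block in a set of size $\geq\kappa$). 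Extending into the dense sets of $N$ can therefore commit you to values conflicting with $\bigcup j"(G\restriction a)$, and there is no condition in $N$ that seals off all $\lambda$-many coordinates at once. Overcoming exactly this obstruction is the content of Schanker's indestructibility preparation (for $\lambda=\kappa^+$: first make near $\kappa^+$-supercompactness indestructible by $\Add(\kappa,\kappa^+)$, then indestructibility by $\Add(\kappa,\theta)$ for all $\theta$ follows) and of his lottery-preparation argument for larger $\lambda$. Finally, your argument for (3) via consistency strength is a non sequitur: the hypothesis is compatible with $V$ containing supercompact cardinals, and no ``small preparatory forcing that trims extraneous large-cardinal structure'' is available; the correct mechanism for your forcing would be the $\GCH$-reflection argument at a measurable, but, as explained above, that same mechanism shows your forcing destroys (1) for $\lambda\geq\kappa^{++}$.
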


In Section \ref{section_generic_embeddings}, we consider properties of generic ultrapowers by the $\Pi^m_n$-indescribability ideals on $P_\kappa\lambda$. Indeed, we give a characterization of $\Pi^m_n$-indescribable subsets of $P_\kappa\lambda$ in terms of generic elementary embeddings.

In Section \ref{section_diamond}, generalizing similar principles considered by Hellsten \cite{MR2026390}, we use the weakly compact ideal $\NWC_{\kappa,\lambda}$ to formulate a two-cardinal weakly compact diamond principle as follows.

\begin{definition} \label{definition_weakly_compact_diamond}
Suppose $W\in \NWC_{\kappa,\lambda}^+$. We say that \emph{weakly compact diamond holds on $W$} and write $\wcdiamond_{\kappa,\lambda}(W)$ if and only if there is a sequence $\<a_z\subseteq z\st z\in P_\kappa\lambda\>$ such that for every $A\subseteq \lambda$ we have $\{z \in W\st a_z=A\cap z\}\in \NWC_{\kappa,\lambda}^+$. When $W=P_\kappa\lambda$ we write simply $\wcdiamond_{\kappa,\lambda}$ instead of $\wcdiamond_{\kappa,\lambda}(P_\kappa\lambda)$.
\end{definition}
As an application of the $1$-club characterization of weakly compact subsets of $P_\kappa\lambda$ obtained by combining Theorem \ref{theorem_n_club} and Theorem \ref{theorem_characterization_embedding}, we prove that for any $W\in\NWC_{\kappa,\lambda}^+$, if $\kappa$ is $\lambda$-supercompact then $\wcdiamond_{\kappa,\lambda}(W)$ holds. We also show that, assuming $\GCH$, there is a natural way to force $\wcdiamond_{\kappa,\lambda}(W)$ without collapsing cofinalities from the assumption that $P_\kappa\lambda$ is weakly compact and $\lambda^{<\lambda}=\lambda$.

In Section \ref{section_semimorasses}, we use a result of Pereira \cite{MR3640048} to show that if $\kappa$ is $\kappa^+$-supercompact and $\GCH$ holds then there is a cofinality-preserving forcing extension in which there is a $(\kappa,\kappa^+)$-semimorass $\mu\subseteq (\kappa,\kappa^+)$ which is $\Pi^1_n$-indescribable for all $n<\omega$.

We close the paper with a discussion of several open questions concerning reflection properties of weakly compact sets $W\subseteq P_\kappa\lambda$ and generalizations of club shooting forcing to the context of the weakly compact ideal on $P_\kappa\lambda$.

\section{Preliminaries on strongly normal ideals and strong stationarity}\label{section_strong_stationarity}

Throughout this section we assume $\kappa\leq\lambda$ are cardinals, $\kappa$ is a regular cardinal and $X$ is a set of ordinals. Recall that an ideal $I$ on $P_\kappa X$ is \emph{normal} if for every $S\in I^+$ and every function $f:P_\kappa X\to X$ with $\{x\in S \st f(x)\in x\}\in I^+$ there is a $T\in P(S)\cap I^+$ such that $f\restrict T$ is constant. Equivalently, an ideal $I$ on $P_\kappa X$ is normal if and only if for every $\{Z_x\st x\in X\}\subseteq I$ the set $\diagonalunion_{x\in X}Z_x=_{\defn}\{y\in P_\kappa X\st\text{$y\in Z_x$ for some $x\in y$}\}$ is in $I$ (see \cite[Proposition 2.19]{MR2768692}). An ideal $I$ on $P_\kappa\lambda$ is \emph{fine} if and only if $\widetilde{\{\alpha\}}=_{\defn}\{x\in P_\kappa\lambda\st \alpha\in x\}\in I^*$ for every $\alpha<\lambda$. Jech \cite{MR0325397} generalized the notion of closed unbounded and stationary subsets of cardinals to subsets $P_\kappa\lambda$. Recall that a set $C\subseteq P_\kappa\lambda$ is \emph{club in $P_\kappa\lambda$} if (1) for every $x\in P_\kappa\lambda$ there is a $y\in C$ with $x\subseteq y$ and (2) whenever $X\subseteq C$ is directed under the ordering $\subsetneq$ and $|X|<\kappa$ we have $\bigcup X\in C$. A set $S\subseteq P_\kappa\lambda$ is \emph{stationary} if $S\cap C\neq\emptyset$ for all clubs $C\subseteq P_\kappa\lambda$. Jech proved that the collection $\NS_{\kappa,\lambda}=\{X\subseteq P_\kappa\lambda\st\text{$X$ is not stationary in $P_\kappa\lambda$}\}$ is a normal fine $\kappa$-complete ideal on $P_\kappa\lambda$. Carr \cite{MR667297} proved that, when $\kappa$ is a regular cardinal, the nonstationary ideal $\NS_{\kappa,\lambda}$ is the \emph{minimal} normal fine $\kappa$-complete ideal on $P_\kappa\lambda$.


When considering ideals on $P_\kappa\lambda$ or $P_\kappa X$ for $\kappa$ inaccessible, it is quite fruitful to work with a different notion of closed unboundedness obtained by replacing the structure $(P_\kappa\lambda,\subseteq)$ with a different one. For $x\in P_\kappa X$ we define $\kappa_x=|x\cap\kappa|$ and we define an ordering $(P_\kappa X,\sqsubset)$ by letting 
\[\text{$x\sqsubset y$ if and only if $x\in P_{\kappa_y}y$}.\] Given a function $f:P_\kappa X\to P_\kappa X$ we let
\[C_f=_{\defn}\{x\in P_\kappa X\st x\cap\kappa\neq\emptyset\land f[P_{\kappa_x}x]\subseteq P_{\kappa_x}x\}.\] 
We say that a set $C\subseteq P_\kappa X$ is \emph{weakly closed unbounded} if there is an $f$ such that $C= C_f$. Note that it is straightforward to see that when $\kappa$ is inaccessible, every club $C\subseteq P_\kappa X$ contains a weak club (see Lemma \ref{lemma_jech_clubs_are_weak_clubs} below). However, in general, it is not the case that every weak club contains a club (this follows from Corollary \ref{corollary_NS_not_NSS} below). A set $S\subseteq P_\kappa X$ is called \emph{strongly stationary} if for every $f$ we have $C_f\cap S\neq\emptyset$. An ideal $I$ on $P_\kappa X$ is \emph{strongly normal} if for any $S\in I^+$ and function $f:P_\kappa X\to P_\kappa X$ such that $f(x)\sqsubset x$ for all $x\in X$ there is $Y\in P(X)\cap I^+$ such that $f\restrict Y$ is constant. It follows easily that an ideal $I$ on $P_\kappa X$ is strongly normal if and only if for any $\{X_a\st a\in P_\kappa X\}\subseteq I$ the set $\diagonalunion_{\sqsubset} X_a=_{\defn}\{x\in P_\kappa X\st \text{$x\in X_a$ for some $a\sqsubset x$}\}$ is in $I$. Note that an easy argument shows that if $\kappa$ is $\lambda$-supercompact then the prime ideal dual to a normal fine ultrafilter on $P_\kappa\lambda$ is strongly normal. Matet \cite{MR954259} showed that if $\kappa$ is Mahlo then the collection of non--strongly stationary sets
\[\NSS_{\kappa,X}=_{def}\{X\subseteq P_\kappa X\st\text{$\exists f:P_\kappa X\to P_\kappa X$ such that $X\cap C_f=\emptyset$}\}\]
is the minimal strongly normal ideal on $P_\kappa X$. Improving this, Carr, Levinski and Pelletier obtained the following.

\begin{theorem}[Carr-Levinski-Pelletier \cite{MR1074449}]\label{cpl_minimal_strongly_normal_ideal} Suppose $\kappa$ is a regular cardinal and $X$ is a set of ordinals with $\kappa\leq |X|$. There is a strongly normal ideal on $P_\kappa X$ if and only if $\kappa$ is Mahlo or $\kappa=\mu^+$ where $\mu^{<\mu}=\mu$; moreover, in this case $\NSS_{\kappa, X}$ is the minimal such ideal. \marginpar{\tiny We need a slight generalization which is that $P_{\kappa_x}x$ carries a strongly normal ideal if and only if $\kappa_x$ is Mahlo or $\kappa_x=\mu^+$ for some $\mu$ with $\mu^{<\mu}=\mu$.}
\end{theorem}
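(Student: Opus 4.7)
The plan is to prove the theorem in three stages: first, that $\NSS_{\kappa,\lambda}$ is a strongly normal ideal whenever $\kappa$ is Mahlo or $\kappa = \mu^+$ with $\mu^{<\mu} = \mu$; second, that any strongly normal ideal on $P_\kappa\lambda$ extends $\NSS_{\kappa,\lambda}$, yielding its minimality; and third, the converse, that if neither hypothesis holds then no strongly normal ideal on $P_\kappa\lambda$ can exist at all. For the first stage, the family of weak clubs $\{C_f\}$ is easily seen to be closed under $<\kappa$-intersections by the combination $h(x) = f(x) \cup g(x)$ (which yields $C_h \subseteq C_f \cap C_g$), and fineness follows from constant functions; the substantive step is nonemptiness of each $C_f$. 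In the Mahlo case, build a continuous $\subseteq$-increasing chain $\<x_\alpha : \alpha < \kappa\>$ of elements of $P_\kappa\lambda$ closing under $f$ at each stage while arranging $x_\alpha \cap \kappa = \alpha$; Mahloness yields a stage $\alpha$ where $\kappa_{x_\alpha}$ is inaccessible, and the built-in closure then delivers $f[P_{\kappa_{x_\alpha}} x_\alpha] \subseteq P_{\kappa_{x_\alpha}} x_\alpha$. In the case $\kappa = \mu^+$, the arithmetic bound $|P_\mu x| = \mu$ for $|x| = \mu$ makes a $\mu$-length chain sufficient to produce an $x$ with $\kappa_x = \mu$ closed under $f$. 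Strong normality of $\NSS_{\kappa,\lambda}$ then follows from a diagonal Fodor-style argument: given a $<$-regressive $f$ on $X \in \NSS_{\kappa,\lambda}^+$, if every fiber $X_y = \{x \in X : f(x) = y\}$ lies in $\NSS_{\kappa,\lambda}$ with witness $g_y$, then (using a fixed bijection $P_\kappa\lambda \times P_\kappa\lambda \to P_\kappa\lambda$) one can encode the family $\{g_y : y < x\}$ into a single $g$ so that $x \in C_g$ implies $x \in C_{g_y}$ for every $y < x$, contradicting $X \in \NSS_{\kappa,\lambda}^+$.

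For minimality, let $I$ be any strongly normal ideal on $P_\kappa\lambda$ and let $C_f$ be a weak club. Assume toward a contradiction that $X = P_\kappa\lambda \setminus C_f$ lies in $I^+$. For each $x \in X$ choose $y_x < x$ with $f(y_x) \not\subseteq x$, handling the fineness-controlled case $x \cap \kappa = \emptyset$ separately. Since $x \mapsto y_x$ is $<$-regressive, strong normality of $I$ produces $Y \in P(X) \cap I^+$ on which $y_x = y_0$ is constant, so $f(y_0) \not\subseteq x$ for every $x \in Y$; a final application of fineness (enumerating the fewer-than-$\kappa$ elements of $f(y_0)$) gives the contradiction. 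Hence $\NSS_{\kappa,\lambda} \subseteq I$.

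The main technical obstacle will be the necessity direction, whose contrapositive splits on whether $\kappa$ is inaccessible-but-not-Mahlo or $\kappa = \mu^+$ with $\mu^{<\mu} > \mu$. In the first case, fix a club $D \subseteq \kappa$ of singular cardinals; for each $\alpha \in D$ a fixed cofinal sequence in $\alpha$ of order type $\cf(\alpha) < \alpha$ supplies local data from which a single function $f$ can be defined whose value at $x$ prevents $f[P_{\kappa_x} x] \subseteq P_{\kappa_x} x$ whenever $\kappa_x \in D$. In the successor case, $\mu^{<\mu} > \mu$ produces a family of $<\mu$-sequences from $\mu$ that no $x$ of size $\mu$ can absorb, yielding an analogous global $f$. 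The delicate point is to patch these level-by-level witnesses into a single function $f : P_\kappa\lambda \to P_\kappa\lambda$ whose values all lie in $P_\kappa\lambda$ while simultaneously defeating closure at every $x$ of the relevant type; it is precisely this tension that pins the cardinal arithmetic hypotheses down to the stated form, and I expect verifying it to require the most care.
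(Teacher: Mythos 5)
This theorem is quoted from Carr--Levinski--Pelletier and the paper supplies no proof of it, so there is nothing internal to compare your argument against; I am judging the sketch on its own terms. Your overall architecture --- properness of the weak-club filter via closure chains, minimality via one application of strong normality to a $<$-regressive choice of ``bad'' witnesses $y_x$ followed by fineness, and necessity via a club of singular cardinals in the Mahlo case and a counting argument in the successor case --- is the standard route, and for $\kappa$ inaccessible it is essentially sound (the Mahlo-case chain argument and the diagonal encoding for strong normality both go through as you describe).

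The successor case is where the sketch has a genuine gap, and it is worth being precise because the paper's own rendering of $C_f$ is partly responsible. With $\kappa=\mu^+$, membership $x\in C_f$ as defined here requires $f(y)\in P_{\kappa_x}x$ for every $y\in P_{\kappa_x}x$, hence $|f(y)|<\kappa_x\leq\mu$; your $\mu$-length chain only arranges $f(y)\subseteq x$, and if $f$ takes values of cardinality exactly $\mu$ (permitted, since $f$ maps into $P_{\mu^+}\lambda$) then no $x$ at all lies in $C_f$ and $\NSS_{\mu^+,\lambda}$, read literally, is improper. The Carr--Levinski--Pelletier result in the successor case rests on closure in the sense $f(y)\subseteq x$, so you must either adopt that formulation or restrict the class of test functions; as written, ``closed under $f$'' cannot be achieved. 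The same issue resurfaces in your minimality argument: $x\notin C_f$ means $f(y_x)\notin P_{\kappa_x}x$, which splits into $f(y_x)\not\subseteq x$ (which you handle by fineness) and $|f(y_x)|\geq\kappa_x$ (which you do not mention, and which fineness can only dispatch when $|f(y_0)|^+<\kappa$ --- exactly what fails at $\kappa=\mu^+$). Finally, your necessity case-split (``inaccessible-but-not-Mahlo or $\kappa=\mu^+$ with $\mu^{<\mu}>\mu$'') omits regular limit cardinals that are not strong limits; for these the club-of-singulars device only forces $\{x\in P_\kappa\lambda : \kappa_x\ \text{regular}\}$ to be positive, i.e.\ weak Mahloness of $\kappa$, and an additional cardinal-arithmetic argument in the spirit of your successor case is needed to upgrade the $\kappa_x$'s to genuine inaccessibles.
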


In these cases, since every strongly normal ideal on $P_\kappa\lambda$ is normal, we have
\[\NS_{\kappa,\lambda}\subseteq\NSS_{\kappa,\lambda}.\]
The following lemma, due to Zwicker (see the discussion on page 61 of \cite{MR1074449}), shows that if $\kappa$ is weakly inaccessible the previous containment is strict. We include a proof for the reader's convenience.
\begin{lemma}[Zwicker]
If $\kappa$ is weakly inaccessible then $\NS_{\kappa,\lambda}$ is not strongly normal. \marginpar{\tiny Do we have strict containment when $\kappa$ is a successor? I.e. is $\NS_{\kappa,\lambda}$ not strongly normal when $\kappa$ is the successor of $\mu$ where $\mu^{<\mu}=\mu$?}
\end{lemma}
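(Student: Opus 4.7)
The plan is to witness the failure of strong normality directly by exhibiting a sequence $\langle X_a:a\in P_\kappa\lambda\rangle\subseteq\NS_{\kappa,\lambda}$ whose strong diagonal union
\[
\diagonalunion_{<}X_a=\{x\in P_\kappa\lambda\st\exists a< x,\ x\in X_a\}
\]
belongs to $\NS_{\kappa,\lambda}^+$. For each $a\in P_\kappa\lambda$, since $\kappa$ is regular the ordinal $\sigma_a=\sup(a\cap\kappa)$ lies below $\kappa$; I set
\[
X_a=\{x\in P_\kappa\lambda\st a< x \text{ and } \sigma_a\notin x\}.
\]
Because $X_a$ is disjoint from the club $\{x\st\sigma_a\in x\}$, it belongs to $\NS_{\kappa,\lambda}$.

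Next I would check that $\diagonalunion_{<}X_a$ contains
\[
Y=\{x\in P_\kappa\lambda\st x\cap\kappa=\kappa_x \text{ and } \kappa_x \text{ is a singular cardinal}\}.
\]
Given $x\in Y$, pick $a\subseteq\kappa_x$ cofinal in $\kappa_x$ with $|a|=\cof(\kappa_x)<\kappa_x$. Then $a\subseteq x\cap\kappa\subseteq x$ and $|a|<\kappa_x$, so $a< x$; meanwhile $\sigma_a=\sup a=\kappa_x\notin x$, because $x\cap\kappa$ equals the ordinal $\kappa_x$. Hence $x\in X_a$.

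The main step, and the main obstacle, is to verify that $Y$ is stationary. Given any club $C\subseteq P_\kappa\lambda$, I would intersect $C$ with the club $\{x\st x\cap\kappa\text{ is a cardinal}\}$ to obtain a club $C'$, and then construct a $\subseteq$-increasing $\omega$-chain $y_0\subseteq y_1\subseteq\cdots$ in $C'$ with $\kappa_{y_n}$ strictly increasing; the extension at stage $n+1$ uses that $C'\cap\{x\st \kappa_{y_n}^{+}\subseteq x\}$ is a club, and this is the precise point where weak inaccessibility enters, to guarantee $\kappa_{y_n}^{+}<\kappa$. The union $y=\bigcup_n y_n$ lies in $C'$ by closure; since each $y_n\cap\kappa$ equals the cardinal $\kappa_{y_n}$, the ordinal $y\cap\kappa=\sup_n\kappa_{y_n}$ is a limit cardinal of cofinality $\omega$, so $\kappa_y=y\cap\kappa$ is singular and $y\in Y\cap C$. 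Combining these observations yields a family $\langle X_a\rangle\subseteq\NS_{\kappa,\lambda}$ whose strong diagonal union lies in $\NS_{\kappa,\lambda}^+$, showing that $\NS_{\kappa,\lambda}$ is not strongly normal.
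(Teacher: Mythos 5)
Your proof is correct, and its skeleton matches the paper's: both arguments isolate the stationary set of $x$ with $x\cap\kappa$ equal to a singular cardinal and then exploit singularity to produce a $<$-regressive witness that cannot be stabilized. The paper works directly with the regressive-function form of strong normality, defining $F(x)=\ot(\{\delta<x\cap\kappa:\delta\in\CARD\})$ on $A=\{x\in P_\kappa\lambda: x\cap\kappa\text{ is an uncountable cardinal of cofinality }\omega\}$ and observing that each fiber $F^{-1}(\{\alpha\})$ fails even to be unbounded; you instead use the (equivalent, and stated in the paper just before the lemma) strong-diagonal-union formulation and assign to each $x$ a cofinal $a\subseteq\kappa_x$ of size $\cf(\kappa_x)$. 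These are dual packagings of the same idea, but your choice of witness is arguably the more robust one: the paper's $F(x)$ is essentially the $\alpha$ with $x\cap\kappa=\aleph_\alpha$, and the asserted inequality $|F(x)|<|x\cap\kappa|$ fails when $x\cap\kappa$ is a fixed point of the aleph function, so the paper's argument implicitly needs $A$ to be thinned out; your cofinal-subset witness satisfies $a<x$ at every singular $\kappa_x$ without exception. One small point to make explicit in your stationarity argument: arrange $\omega\subseteq y_0$ (the paper starts its chain with $\omega_1\subseteq x_0$), since otherwise the cardinals $\kappa_{y_n}$ could in principle all be finite and $y\cap\kappa=\omega$ would be regular rather than singular. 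With that word added everything checks out: each $X_a$ avoids the club $\{x:\sigma_a\in x\}$, and every $x\in Y$ lies in $X_a$ for the chosen $a<x$ because $\sup a=\kappa_x\notin x$.
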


\begin{proof}
Let $A=\{x\in P_\kappa\lambda\st\text{$x\cap\kappa$ is an uncountable cardinal with $\cf(x\cap\kappa)=\omega$}\}$. First we show that $A$ is a stationary subset of $P_\kappa\lambda$. Let $C\subseteq P_\kappa\lambda$ be a club and recall that the set $C^*=\{x\in P_\kappa\lambda\st x\cap \kappa\in\kappa\}$ is a club subset of $P_\kappa\lambda$. We inductively define a sequence $\<x_i\st i<\omega\>$ with $x_i\in C\cap C^*$ as follows. Let $\kappa_0=\omega_1$ and choose $x_0\in C\cap C^*$ with $\kappa_0\subseteq x_0$. Let $\kappa_{i+1}>x_i\cap \kappa$ and choose $x_{i+1}\in C\cap C^*$ with $\kappa_{i+1}\subsetneq x_{i+1}$. Now let $x_\omega=\bigcup_{i<\omega}x_i$ and notice that $x_\omega\in C\cap C^*\cap A$.

Now define $F:A\to P_\kappa\lambda$ by letting $F(x)$ be some countable and cofinal subset of $x\cap\kappa$. Then $F(x)\sqsubset x$ for all $x\in A$ and $F$ is not constant on any stationary subset of $A$.
\end{proof} 

\begin{corollary}\label{corollary_NS_not_NSS}
If $\kappa$ is Mahlo then $\NSS_{\kappa,\lambda}$ is nontrivial and $\NS_{\kappa,\lambda}\subsetneq\NSS_{\kappa,\lambda}$.
\end{corollary}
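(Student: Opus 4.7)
The plan is to deduce the corollary directly from the two results immediately preceding it: Theorem \ref{cpl_minimal_strongly_normal_ideal} and the Zwicker lemma.

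First I would handle nontriviality. Since $\kappa$ is Mahlo, Theorem \ref{cpl_minimal_strongly_normal_ideal} of Carr--Levinski--Pelletier asserts that $P_\kappa\lambda$ carries a strongly normal ideal and that $\NSS_{\kappa,\lambda}$ is the (unique) minimal such ideal. In particular $\NSS_{\kappa,\lambda}$ is a proper ideal, so $P_\kappa\lambda \notin \NSS_{\kappa,\lambda}$, giving nontriviality.

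Next, for the strict containment, recall the paper has already observed that every strongly normal ideal on $P_\kappa\lambda$ is normal, from which $\NS_{\kappa,\lambda}\subseteq\NSS_{\kappa,\lambda}$ follows by minimality of $\NS_{\kappa,\lambda}$ among normal fine $\kappa$-complete ideals (Carr's theorem). The strict containment is then a proof by contradiction: suppose for contradiction that $\NS_{\kappa,\lambda}=\NSS_{\kappa,\lambda}$. Since $\kappa$ is Mahlo it is in particular weakly inaccessible, so the preceding lemma (Zwicker) tells us that $\NS_{\kappa,\lambda}$ is not strongly normal. But by assumption $\NS_{\kappa,\lambda}=\NSS_{\kappa,\lambda}$ would be strongly normal, a contradiction. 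Hence $\NS_{\kappa,\lambda}\subsetneq\NSS_{\kappa,\lambda}$.

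There is really no obstacle here — the corollary is essentially just the conjunction of Theorem \ref{cpl_minimal_strongly_normal_ideal} and the preceding lemma, together with the already-noted fact that strongly normal ideals are normal. The only thing worth being careful about is making sure the hypothesis ``$\kappa$ is Mahlo'' is used to invoke both results (nontriviality of $\NSS_{\kappa,\lambda}$ from Carr--Levinski--Pelletier, and weak inaccessibility of $\kappa$ needed for the Zwicker lemma), and that one does not accidentally try to prove strict containment directly by exhibiting a specific strongly stationary but nonstationary set, which would duplicate (and complicate) Zwicker's argument rather than reuse it.
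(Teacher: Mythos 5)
Your proposal is correct and matches the paper's intended argument: the paper states this corollary without a separate proof precisely because it is the conjunction of Theorem \ref{cpl_minimal_strongly_normal_ideal} (giving nontriviality of $\NSS_{\kappa,\lambda}$ and, via the remark that strongly normal implies normal, the containment $\NS_{\kappa,\lambda}\subseteq\NSS_{\kappa,\lambda}$) with the Zwicker lemma (giving strictness, since equality would make $\NS_{\kappa,\lambda}$ strongly normal). Your care in noting where the Mahlo hypothesis enters each ingredient is exactly right.
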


\begin{lemma}\label{lemma_jech_clubs_are_weak_clubs}
Suppose $\kappa$ is an inaccessible cardinal and $X$ is a set of ordinals with $\kappa\leq|X|$. If $C$ is a club subset of $P_\kappa X$ and $f:P_\kappa X\to P_\kappa X$ is such that $z\subsetneq f(z)\in C$ for every $z\in P_\kappa X$, then
\[C_f=\{x\in P_\kappa X\st x\cap\kappa\neq\emptyset\land f[P_{\kappa_x}x]\subseteq P_{\kappa_x}x\}\]
is a subset of $C$. \marginpar{\tiny We need this for $P_{\kappa_x}x$ too!}
\end{lemma}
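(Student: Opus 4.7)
The plan is to show that for $x\in C_f$, the family $\mathcal{D}=f[P_{\kappa_x}x]$ realizes $x$ as a $\subseteq$-directed union of fewer than $\kappa$ elements of $C$; the closure of the club $C$ then forces $x\in C$.

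First I would argue that $\kappa_x$ must be infinite. If $\kappa_x$ were a positive integer $n$, pick $z\subseteq x$ of cardinality $n-1$ (which exists because $|x|\geq|x\cap\kappa|=n$), so $z\in P_{\kappa_x}x$; the hypothesis $f[P_{\kappa_x}x]\subseteq P_{\kappa_x}x$ then gives $|f(z)|<n$, while $z\subsetneq f(z)$ forces $|f(z)|\geq n$, a contradiction. Hence $\kappa_x\geq\omega$, and in particular every singleton $\{\alpha\}$ with $\alpha\in x$ lies in $P_{\kappa_x}x$.

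Next I would verify four properties of $\mathcal{D}$. (i) $\mathcal{D}\subseteq C$, by the hypothesis that $f(z)\in C$ for every $z$. (ii) $\bigcup\mathcal{D}=x$: the inclusion $\subseteq$ holds because each $f(z)\in P_{\kappa_x}x\subseteq P(x)$, while for $\supseteq$ any $\alpha\in x$ satisfies $\{\alpha\}\in P_{\kappa_x}x$ and $\alpha\in\{\alpha\}\subsetneq f(\{\alpha\})$. (iii) $\mathcal{D}$ is $\subseteq$-directed: given $f(z_1),f(z_2)\in\mathcal{D}$, the set $w=f(z_1)\cup f(z_2)$ lies in $P_{\kappa_x}x$ (using that $\kappa_x$ is infinite, so a union of two sets of size $<\kappa_x$ has size $<\kappa_x$), whence $f(w)\in\mathcal{D}$ is an upper bound of $f(z_1),f(z_2)$. (iv) $|\mathcal{D}|\leq|P_{\kappa_x}x|\leq|x|^{<\kappa_x}<\kappa$, by inaccessibility of $\kappa$ together with $|x|,\kappa_x<\kappa$.

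Finally I would invoke that a club in $P_\kappa\lambda$ is closed under $\subseteq$-directed unions of fewer than $\kappa$ of its elements, concluding $x=\bigcup\mathcal{D}\in C$. I expect the only mildly subtle points to be the initial argument that $\kappa_x\geq\omega$ and the verification of directedness in (iii); once these are in hand, the rest is routine, and the same template will apply uniformly in the two-cardinal setting $P_{\kappa_x}x$ flagged in the margin of the lemma.
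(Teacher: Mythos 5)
Your proof is correct and follows essentially the same route as the paper's: the paper takes the directed family to be $C\cap P_{\kappa_x}x$ (which is directed because $f(y\cup z)$ serves as an upper bound) and concludes $x=\bigcup(C\cap P_{\kappa_x}x)\in C$ by closure of clubs under small directed unions, whereas you use the subfamily $f[P_{\kappa_x}x]$ — an immaterial difference. Your explicit checks that $\kappa_x$ is infinite and that the union really is all of $x$ are details the paper leaves implicit, so no gap either way.
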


\begin{proof}
Suppose $C$ is a club subset of $P_\kappa X$ and $f$ is as in the statement of the lemma. Suppose $x\in P_\kappa X$ and $f[P_{\kappa_x}x]\subseteq P_{\kappa_x}x$. It follows that $C\cap P_{\kappa_x}x$ is directed since if $y,z\in C\cap P_{\kappa_x}x$ then $y\cup z\in P_{\kappa_x}x$ and hence $y\cup z\subsetneq f(y\cup z)\in C\cap P_{\kappa_x}x$. Since $C\cap P_{\kappa_x}x$ is a directed subset of $C$ with size at most $|x|^{<\kappa_x}<\kappa$, it follows that $x=\bigcup (C\cap P_{\kappa_x}x)\in C$. Thus $C_f\subseteq C$.
\end{proof}

The next lemma shows that $\NSS_{\kappa,\lambda}$ can be obtained by restricting $\NS_{\kappa,\lambda}$ to a particular stationary sets

\begin{lemma}[\cite{MR1074449}, Corollary 3.3]
If $\lambda^{<\kappa}=\lambda$ and $\NSS_{\kappa,\lambda}$ is nontrivial, then for any bijection $c:P_\kappa\lambda\to \lambda$, 
\[\NSS_{\kappa,\lambda}=\NS_{\kappa,\lambda}\restrict S_c\] 
where $S_c=\{x\in P_\kappa\lambda\st c[P_{\kappa_x}x]\subseteq x\}$.
\end{lemma}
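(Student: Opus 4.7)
The plan is to prove the two inclusions $\NSS_{\kappa,\lambda}\subseteq \NS_{\kappa,\lambda}\restrict S_c$ and $\NS_{\kappa,\lambda}\restrict S_c\subseteq \NSS_{\kappa,\lambda}$ separately, using the bijection $c$ to translate between the strongly normal (weakly closed unbounded) and ordinary (Jech) closed unbounded structures, with $S_c$ as the bridge.

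For $\NSS_{\kappa,\lambda}\subseteq \NS_{\kappa,\lambda}\restrict S_c$, suppose $X\in\NSS_{\kappa,\lambda}$, witnessed by $f:P_\kappa\lambda\to P_\kappa\lambda$ with $X\cap C_f=\emptyset$. Writing $F_\alpha:=f(c^{-1}(\alpha))$ for $\alpha<\lambda$, and using $\lambda^{<\kappa}=\lambda$, I would form
\[
D:=\{x\in P_\kappa\lambda:\; x\cap\kappa\text{ is a nonzero cardinal},\ \text{and}\ F_\alpha\subseteq x,\ |F_\alpha|\in x\text{ for every }\alpha\in x\}
\]
and check that $D$ is a Jech-club (closed under directed unions of size ${<}\kappa$, and unbounded by iteratively padding $x\cap\kappa$ up to a cardinal while closing under $\alpha\mapsto F_\alpha\cup\{|F_\alpha|\}$). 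The key computation is $D\cap S_c\subseteq C_f$: for $x\in D\cap S_c$ and $y\in P_{\kappa_x}x$, $x\in S_c$ gives $\alpha:=c(y)\in x$, and then $x\in D$ gives $F_\alpha=f(y)\subseteq x$ together with $|f(y)|\in x\cap\kappa=\kappa_x$, so $f(y)\in P_{\kappa_x}x$. Hence $X\cap S_c\cap D\subseteq X\cap C_f=\emptyset$, giving $X\cap S_c\in\NS_{\kappa,\lambda}$.

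For the reverse inclusion, suppose $X\cap S_c\in\NS_{\kappa,\lambda}$, witnessed by a Jech-club $D$ with $D\cap X\cap S_c=\emptyset$. Using the unboundedness of $D$, I fix $h:P_\kappa\lambda\to P_\kappa\lambda$ with $z\subsetneq h(z)\in D$ for every $z$, and set $g(z):=h(z)\cup\{c(z)\}$. The argument then parallels Lemma \ref{lemma_jech_clubs_are_weak_clubs}: for $x\in C_g$ and $y\in P_{\kappa_x}x$, the condition $g(y)\in P_{\kappa_x}x$ forces both $c(y)\in x$ (so $x\in S_c$) and $h(y)\in P_{\kappa_x}x$; the family $\{h(y):y\in P_{\kappa_x}x\}\subseteq D$ is directed, has size $|P_{\kappa_x}x|<\kappa$ (via the nontriviality of $\NSS_{\kappa,\lambda}$, which through Theorem \ref{cpl_minimal_strongly_normal_ideal} bounds $|x|^{<\kappa_x}<\kappa$), and its union equals $x$, so $x\in D$ by Jech-closure. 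Hence $C_g\subseteq D\cap S_c$ and $C_g\cap X=\emptyset$, witnessing $X\in\NSS_{\kappa,\lambda}$.

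The main obstacle is translating the size clause ``$|f(y)|<\kappa_x$'' in the definition of $C_f$ into a Jech-closure condition in the first direction; the closure $f(y)\subseteq x$ is routine, but the variable threshold $\kappa_x$ is not. The trick of demanding simultaneously that $x\cap\kappa$ be a cardinal (so $x\cap\kappa=\kappa_x$) and that $|F_\alpha|\in x$ (so $|F_\alpha|\in x\cap\kappa=\kappa_x$) handles both requirements at once. Establishing Jech-unboundedness of $D$ is direct in the Mahlo case, where cardinals below $\kappa$ are abundant; in the successor case $\kappa=\mu^+$ with $\mu^{<\mu}=\mu$, one works inside the Jech-club $\{x:\mu\subseteq x\}$, on which $\kappa_x=\mu$ is automatic, and invokes the nontriviality of $\NSS_{\kappa,\lambda}$ to arrange $|F_\alpha|<\mu$ for the relevant $\alpha$.
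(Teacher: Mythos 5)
The paper offers no proof of this lemma---it is imported verbatim from \cite{MR1074449} as Corollary 3.3---so there is nothing in the text to compare your argument against; judged on its own, both inclusions are handled correctly in the case that actually arises in this paper, namely $\kappa$ Mahlo. In the forward direction, your $D$ is genuinely Jech-closed (an increasing union of sets whose trace on $\kappa$ is a cardinal again has a cardinal trace, and the closure clauses are local), it is unbounded because cardinals are cofinal below an inaccessible $\kappa$ and regularity keeps the iterated closure of size ${<}\kappa$, and the computation $D\cap S_c\subseteq C_f$ is exactly where your two auxiliary demands pay off: $x\cap\kappa$ being a cardinal gives $x\cap\kappa=\kappa_x$, and $|F_{c(y)}|\in x\cap\kappa$ then yields $|f(y)|<\kappa_x$ alongside $f(y)\subseteq x$. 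The reverse direction is a correct adaptation of Lemma \ref{lemma_jech_clubs_are_weak_clubs}: attaching $c(z)$ to $h(z)$ forces every closure point of $g$ into $S_c$, and the directed-union argument (with the size bound $|x|^{<\kappa_x}<\kappa$) puts it into $D$.

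The one genuine gap is the successor case $\kappa=\mu^+$, which Theorem \ref{cpl_minimal_strongly_normal_ideal} does not let you exclude. Your remark that nontriviality of $\NSS_{\kappa,\lambda}$ lets you ``arrange $|F_\alpha|<\mu$ for the relevant $\alpha$'' does not follow: nontriviality only guarantees that each $C_f$ is nonempty, which controls $|f(y)|$ for $y$ ranging over $P_{\kappa_{x_0}}x_0$ for a single closure point $x_0$, not for arbitrary $y\in P_\mu\lambda$. If some $y$ with $|y|<\mu$ has $|f(y)|=\mu$, then no $x\supseteq y\cup\mu$ can lie in $C_f$, and since the set of such $x$ meets $S_c$ stationarily, no Jech club $D$ can satisfy $D\cap S_c\subseteq C_f$; your strategy cannot be repaired by shrinking $D$. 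In fact, with the paper's literal definition of $C_f$, the constant function whose value is a set of size $\mu$ already has $C_f=\emptyset$ when $\kappa=\mu^+$, so ``$\NSS_{\kappa,\lambda}$ is nontrivial'' silently forces $\kappa$ to be a limit cardinal and the successor clause of Theorem \ref{cpl_minimal_strongly_normal_ideal} only makes sense after the definitions are adjusted as in \cite{MR1074449}. You should either state explicitly that you are proving the lemma for $\kappa$ Mahlo (which is all this paper ever uses), or carry out the successor case with the corrected definitions rather than by appeal to nontriviality.
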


\section{Elementary embeddings and the weakly compact ideal on $P_\kappa\lambda$}

There are many ways to characterize the weakly compact subsets of $P_\kappa\lambda$ from Definition \ref{definition_weakly_compact} using elementary embeddings. Indeed, all six characterizations of the near $\lambda$-supercompactness of a cardinal $\kappa$ given in \cite{MR2989393} can be generalized to provide characterizations of weakly compact subsets of $P_\kappa\lambda$. Here we summarize the pertinent characterizations without proof; note that the proof is very similar to that of \cite[Theorem 1.4]{MR2989393}.

\begin{lemma}[Schanker]\label{lemma_basic_embedding_characterizations}
For cardinals $\kappa\leq\lambda$ with $\lambda^{<\kappa}=\lambda$ and $W\subseteq P_\kappa\lambda$, the following are equivalent.
\begin{enumerate}
\item $W$ is a weakly compact subset of $P_\kappa\lambda$; in other words, for every $A\subseteq\lambda$ there is a transitive $M\models\ZFC^-$ with $\lambda,A,W\in M$ and $M^{<\kappa}\cap V\subseteq M$, a transitive $N$ and an elementary embedding $j:M\to N$ with critical point $\kappa$ such that $j(\kappa)>\lambda$ and $j"\lambda\in j(W)$.
\item For all $\delta\geq\kappa$ and every transitive $M\models\ZFC^-$ of size $\lambda$ with $\lambda,W\in M$ and $M^{<\delta}\cap V\subseteq M$, there is a transitive $N$ of size $\lambda$ with $N^{<\delta}\cap V\subseteq N$ and $P(\lambda)^M\subseteq N$ and an elementary embedding $j:M\to N$ with critical point $\kappa$ such that $j(\kappa)>\lambda$, $j"\lambda\in j(W)$ and $N=\{j(f)(j"\lambda)\st\text{$f\in M$ is a function with domain $P_\kappa\lambda$}\}.$
\item For every collection $\mathcal{A}$ of at most $\lambda$ subsets of $P_\kappa\lambda$ with $W\in \mathcal{A}$ and every collection $\mathcal{F}$ of at most $\lambda$ functions from $P_\kappa\lambda$ to $\lambda$, there exists a $\kappa$-complete filter $F$ on $P_\kappa\lambda$ such that $W\in F$; $(\forall\alpha<\lambda)(\widetilde{\{\alpha\}}=_{\defn}\{x\in P_\kappa\lambda\st\alpha\in x\}\in F)$; $F$ measures all sets in $\mathcal{A}$, meaning that for all $X\in\mathcal{A}$ either $X\in F$ or $P_\kappa\lambda\setminus X\in F$, and finally, $F$ is $\mathcal{F}$-normal, in the sense that for every $f\in \mathcal{F}$ which is regressive on some set in $F$, there is $\alpha_f<\lambda$ such that $\{x\in P_\kappa\lambda\st f(x)=\alpha_f\}\in F$.
\end{enumerate}
\end{lemma}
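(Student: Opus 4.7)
The plan is to prove the cycle of implications $(1)\Rightarrow(2)\Rightarrow(3)\Rightarrow(1)$, closely modeling each step on the corresponding argument in Schanker's proof of the equivalent characterizations of near $\lambda$-supercompactness \cite[Theorem 1.4]{MR2989393}. Throughout, the hypothesis $\lambda^{<\kappa}=\lambda$ is used to build transitive $\ZFC^-$-models of size $\lambda$ that are closed under ${<}\kappa$-sequences and contain any prescribed collection of at most $\lambda$ many designated objects; the main novelty relative to Schanker's argument is to track the auxiliary predicate for membership in $W$ through each construction.

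For $(1)\Rightarrow(2)$, given $\delta\geq\kappa$ and a transitive $M\models\ZFC^-$ of size $\lambda$ with $\lambda,W\in M$ and $M^{<\delta}\cap V\subseteq M$, I would code $M$ as a subset $A\subseteq\lambda$, apply (1) to obtain $j_0\st M_0\to N_0$ with $M\in M_0$ and $j_0"\lambda\in j_0(W)$, and use $j_0"\lambda$ as a seed to form the $M$-filter $F_0=\{X\in M\st X\subseteq P_\kappa\lambda\land j_0"\lambda\in j_0(X)\}$. Taking the internal ultrapower $j\st M\to N=\Ult(M,F_0)$ together with the factor map $k\st N\to N_0$ given by $k([f]_{F_0})=j_0(f)(j_0"\lambda)$ yields a well-founded $N$ of size $\lambda$ with $N=\{j(f)(j"\lambda)\st f\in M\}$ by construction; the closure $N^{<\delta}\cap V\subseteq N$ transfers from $M$ via $k$, and $P(\lambda)^M\subseteq N$ because each $X\in P(\lambda)^M$ is recoverable in $N$ from $j(X)$ and $j"\lambda$ using the definable order-preserving bijection between $\lambda$ and $j"\lambda$.

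For $(2)\Rightarrow(3)$, given collections $\mathcal{A}$ and $\mathcal{F}$ of size at most $\lambda$ with $W\in\mathcal{A}$, I would build a transitive $\ZFC^-$-model $M$ of size $\lambda$ closed under ${<}\kappa$-sequences with $\lambda,W,\mathcal{A},\mathcal{F}\in M$, apply (2) to obtain $j\st M\to N$, and let $F=\{X\subseteq P_\kappa\lambda\st X\in M\land j"\lambda\in j(X)\}$. Fineness follows from $j(\alpha)\in j"\lambda$ for every $\alpha<\lambda$, $\kappa$-completeness from $\crit(j)=\kappa$, and $W\in F$ is immediate. For $\mathcal{F}$-normality, if $f\in\mathcal{F}$ is regressive on a set in $F$ then $j(f)(j"\lambda)\in j"\lambda$, so $j(f)(j"\lambda)=j(\alpha_f)$ for some $\alpha_f<\lambda$, and pulling this identity back through $j$ yields $\{x\st f(x)=\alpha_f\}\in F$.

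The main obstacle appears in $(3)\Rightarrow(1)$. Given $A\subseteq\lambda$, I would first build a transitive $\ZFC^-$-model $M$ of size $\lambda$ with $A,\lambda,W\in M$ and $M^{<\kappa}\cap V\subseteq M$, then apply (3) with $\mathcal{A}=\{X\in M\st X\subseteq P_\kappa\lambda\}$ and $\mathcal{F}=\{f\in M\st f\st P_\kappa\lambda\to\lambda\}$, both of cardinality at most $\lambda^{<\kappa}=\lambda$. The resulting filter $F$ measures every subset of $P_\kappa\lambda$ lying in $M$, so \L{}o\'s's theorem applies to the internal ultrapower $N=\Ult(M,F)$ with embedding $j\st M\to N$, and the $\omega_1$-completeness inherited from $\kappa$-completeness guarantees well-foundedness. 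The hard step is identifying, after Mostowski collapse, that $j"\lambda=\pi([\id]_F)$ inside $N$: one inclusion uses fineness applied to the constant functions $c_\alpha$, while the reverse inclusion is exactly where $\mathcal{F}$-normality is essential, collapsing any $f$ with $\{x\st f(x)\in x\}\in F$ to a constant $\alpha_f$. Once this identification is in place, $j"\lambda\in j(W)$ reduces via \L{}o\'s's theorem to $W\in F$, which holds by hypothesis, and $j(\kappa)>\lambda$ follows because each $\alpha<\lambda$ is represented modulo $F$ by the function $x\mapsto\ot(x\cap\alpha)$, which is bounded below $\kappa$.
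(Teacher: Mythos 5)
Your proposal is correct and follows exactly the route the paper intends: the paper states this lemma without proof, remarking only that the argument is ``very similar to that of \cite[Theorem 1.4]{MR2989393},'' and your cycle $(1)\Rightarrow(2)\Rightarrow(3)\Rightarrow(1)$ is precisely that adaptation of Schanker's proof with the predicate $W$ carried along (seed filter from $j_0''\lambda$, internal ultrapower with factor map, derived $\mathcal{F}$-normal filter, and the identification $[\id]_F=j''\lambda$ via fineness plus $\mathcal{F}$-normality). The only cosmetic quibble is that the closure $N^{<\delta}\cap V\subseteq N$ in $(1)\Rightarrow(2)$ is most naturally obtained from the representation $N=\{j(f)(j''\lambda)\st f\in M\}$ together with $M^{<\delta}\cap V\subseteq M$ and $j''\gamma\in N$, rather than ``via $k$,'' but the substance is the same.
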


By assuming a little bit more about cardinal arithmetic we obtain another characterization which will be useful for forcing arguments.

\begin{lemma}\label{lemma_normal_embedding}
If $\lambda^{<\lambda}=\lambda$ and $W\subseteq P_\kappa\lambda$ then $W$ is a weakly compact subset of $P_\kappa\lambda$ if and only if for every $A\in H(\lambda^+)$ there is a transitive $M\models\ZFC^-$ of size $\lambda$ with $\lambda,A,W\in M$ and $M^{<\lambda}\cap V\subseteq M$, there is a transitive $N$ of size $\lambda$ with $N^{<\lambda}\cap V\subseteq N$ and an elementary embedding $j:M\to N$ with critical point $\kappa$ such that 
\begin{enumerate}
\item $j(\kappa)>\lambda$,
\item $j"\lambda\in j(W)$,
\item $N=\{j(f)(j"\lambda)\st \text{$f\in M$ is a function with domain $P_\kappa\lambda$}\}$ and
\item if $X\in M$ with $|X|^M\leq\lambda$ and $X\in N$ then $j\restrict X\in N$.
\end{enumerate}
\end{lemma}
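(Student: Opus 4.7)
\textit{Plan.} The backward direction is immediate, since the four listed conditions already provide an embedding witnessing the weak compactness of $W$ in the sense of Definition~\ref{definition_weakly_compact}, equivalently clause (1) of Lemma~\ref{lemma_basic_embedding_characterizations}. The forward direction is the real content.

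Assume $W$ is weakly compact, and fix $A \in H(\lambda^+)$. The first step is to build a transitive $M \models \ZFC^-$ of size $\lambda$ with $\{\lambda, A, W\} \subseteq M$ and $M^{<\lambda} \cap V \subseteq M$. Using $\lambda^{<\lambda} = \lambda$, take a ${<}\lambda$-closed elementary submodel $Y \prec H(\theta)$ of cardinality $\lambda$ with $\{A, \lambda, W\} \subseteq Y$ for sufficiently large regular $\theta$, via a standard chain argument of length $\lambda$ (the hypothesis $\lambda^{<\lambda} = \lambda$ ensures that cardinality is maintained at each step), and let $M$ be its Mostowski collapse. Apply clause (2) of Lemma~\ref{lemma_basic_embedding_characterizations} with $\delta = \lambda$ to obtain a transitive $N$ of size $\lambda$ with $N^{<\lambda} \cap V \subseteq N$ and $P(\lambda)^M \subseteq N$, together with an elementary $j \colon M \to N$ satisfying $\crit(j) = \kappa$, $j(\kappa) > \lambda$, $j"\lambda \in j(W)$, and the seed representation $N = \{j(f)(j"\lambda) : f \in M,\ \dom f = P_\kappa\lambda\}$. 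Conclusions (1), (2), and (3) follow directly.

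To derive (4), fix $X \in M$ with $|X|^M \leq \lambda$ and $X \in N$. When $|X|^M < \lambda$, the transitivity of $N$ places each $x \in X$ in $N$, elementarity places each $j(x)$ in $N$, and the ${<}\lambda$-closure of $N$ in $V$ then places the resulting ${<}\lambda$-sequence $j \restrict X$ in $N$. The main obstacle of the proof is the case $|X|^M = \lambda$: since $N$ is only ${<}\lambda$-closed in $V$, a $\lambda$-length sequence of elements of $N$ need not belong to $N$, and so the naive closure argument breaks down. The key supporting ingredients here are that $j \restrict \lambda \in N$ --- because $j"\lambda \in N$ and $j \restrict \lambda$ is the unique order-isomorphism from $\lambda$ onto $j"\lambda$, and hence is definable in $N$ --- and that $j(e) \in N$ by elementarity for any bijection $e \colon \lambda \to X$ in $M$. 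The plan is then to exhibit $j \restrict X$ as a seed element $j(h)(j"\lambda) \in N$ for a carefully chosen $h \in M$, using the ``${\ot}$-trick'' $\ot(j"\lambda \cap j(\eta)) = \eta$ to produce un-$j$-applied indices in the first coordinates of the resulting set of pairs. The delicate point is that elementarity transforms every occurrence of $e$ inside $h$ into $j(e)$ inside $j(h)$, so crafting $h$ so that $j(h)(j"\lambda)$ equals the desired $\{(e(\eta), j(e(\eta))) : \eta < \lambda\}$ --- rather than a version distorted by $j(e)$ appearing in the first coordinate --- is the technical heart of the argument, and constitutes the main obstacle.
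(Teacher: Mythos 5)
Your setup, your construction of $M$, your use of Lemma~\ref{lemma_basic_embedding_characterizations}~(2) with $\delta=\lambda$ to secure (1)--(3), and your treatment of the case $|X|^M<\lambda$ in (4) are all fine (and routing through clause (2) rather than clause (3) is a harmless variation on the paper). The problem is that the argument stops exactly where the work begins: for $|X|^M=\lambda$ you identify the correct ingredients ($j\restrict\lambda\in N$ from $j"\lambda\in N$, and $j(e)\in N$), announce a ``plan'' to realize $j\restrict X$ as a seed element $j(h)(j"\lambda)$, and then observe that the obvious implementations of that plan produce $\{(j(e)(\eta),j(e(\eta))):\eta<\lambda\}$ rather than $\{(e(\eta),j(e(\eta))):\eta<\lambda\}$ --- and you leave matters there, explicitly labelling this ``the main obstacle.'' A proof cannot end by naming its own missing step. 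Moreover, the seed-representation/$\ot$-trick route is not the way out of this particular difficulty: the $\ot$-trick only ever hands you the \emph{ordinal} indices $\eta$ or their distorted images $j(e)(\eta)$ in the first coordinate, never $e(\eta)$ itself, so no choice of $h$ of the kind you describe will work without further input.

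What actually closes the gap --- and what the paper does --- is to bypass the seed representation entirely and get the bijection $b\colon\lambda\to X$ (not merely $j(b)$) into $N$, after which $j\restrict X$ is the composition $j(b)\circ(j\restrict\lambda)\circ b^{-1}$ of three functions each lying in $N$: for $x\in X$ one sets $f(x)=j(b)\bigl((j\restrict\lambda)(b^{-1}(x))\bigr)=j(b)(j(b^{-1}(x)))=j(x)$. The availability of $b$ in $N$ is where the clause $P(\lambda)^M\subseteq N$ of Lemma~\ref{lemma_basic_embedding_characterizations}~(2) --- which you imported but never used --- earns its keep: working in $M$, code $(\tc(\{X\}),\in)$ by a well-founded extensional relation $E$ on $\lambda$ via a bijection $b\colon\lambda\to\tc(\{X\})$; then $E$ is (via G\"odel pairing) a subset of $\lambda$ in $M$, hence lies in $N$, and $N\models\ZFC^-$ can form the Mostowski collapse of $(\lambda,E)$, whose collapsing map is exactly $b$. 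Thus $b\in N$, $j\restrict\tc(\{X\})\in N$, and $j\restrict X\in N$. You should either supply this coding step or an equivalent argument that $b\in N$; without it, conclusion (4) in the crucial case $|X|^M=\lambda$ is unproved.
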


\begin{proof}
The reverse direction is easy. For the forward direction, assume $W$ is a weakly compact subset of $P_\kappa\lambda$ and $A\in H(\lambda^+)$. Since $\lambda^{<\lambda}\subseteq\lambda$ we can use an iterative Skolem-hull argument to build a transitive $M\elemsub H(\lambda^+)$ of size $\lambda$ with $\lambda,A,W\in M$ and $M^{<\lambda}\cap V\subseteq M$. Now, applying Lemma \ref{lemma_basic_embedding_characterizations} (3), there is a $\kappa$-complete fine $M$-normal $M$-ultrafilter $F$ with $W\in F$. Let $j:M\to \Ult(M,F)=(^\kappa M\cap M)/F$ be the corresponding ultrapower embedding, which is well-founded since $F$ is $\kappa$-complete. Thus we may identify $\Ult(M,F)$ with its transitive collapse $N$ and obtain $j:M\to N$. To see that (1) -- (3) hold one may apply standard arguments. For (4), suppose $X\in M$ with $|X|^M\leq\lambda$ and $X\in N$. Let $b:\lambda\to X$ be a bijection in $M$. By elementarity $j(b):j(\lambda)\to j(X)$ is a bijection in $N$ and $j(b)[j"\lambda]=j"X$. Furthermore, working in $N$, we may define a function $f$ with domain $X$ such that $f(x)=j(b)(j(b^{-1}(x)))=j(b)(j(b^{-1})(j(x)))=j\restrict X(x)$.
\end{proof}



The next definition will make negating the definition of weakly compact set easier.

\begin{definition}
Suppose $\kappa\leq\lambda$ are cardinals and $Z\in \NWC_{\kappa,\lambda}$. We say that $A\subseteq \lambda$ \emph{witnesses that $Z$ is not weakly compact} or \emph{witnesses $Z\in \NWC_{\kappa,\lambda}$} if and only if whenever $M\models\ZFC^-$ is transitive with $\lambda,A,Z\in M$ and whenever $N$ is transitive and $j:M\to N$ is an elementary embedding with critical point $\kappa$ such that $j(\kappa)>\lambda$, we must have $j"\lambda\notin j(Z)$; in other words, $A$ being in $M$ guarantees that $j"\lambda\notin j(Z)$.
\end{definition}

\begin{proposition}
If $P_\kappa \lambda$ is weakly compact then the non--weakly compact ideal $\NWC_{\kappa,\lambda}$ is a strongly normal proper ideal.
\end{proposition}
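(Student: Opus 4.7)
My plan is to verify three things: properness, closure under subsets and finite unions, and strong normality, all via the embedding characterization of weak compactness together with the witnessing notion just defined. Properness is exactly the assumption $P_\kappa\lambda \in \NWC_{\kappa,\lambda}^+$. Closure under subsets follows from elementarity: if $Y \subseteq Z$ and $Y$ is weakly compact, then for any $A \subseteq \lambda$ one may include both $Y$ and $Z$ in the witnessing structure $M$ and apply weak compactness of $Y$ to produce $j : M \to N$ with $j"\lambda \in j(Y) \subseteq j(Z)$. Closure under finite unions will fall out as a special case of the witness-based argument I use for strong normality below.

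For strong normality, I use the standard equivalence recalled in Section \ref{section_strong_stationarity} that an ideal $I$ on $P_\kappa\lambda$ is strongly normal iff it is closed under diagonal unions $\diagonalunion_{<} X_a$. So fix $\{X_a : a \in P_\kappa\lambda\} \subseteq \NWC_{\kappa,\lambda}$ and suppose for contradiction that $W := \diagonalunion_{<} X_a$ is weakly compact. Choose for each $a$ a witness $A_a \subseteq \lambda$ to $X_a \in \NWC_{\kappa,\lambda}$, and, using $\lambda^{<\kappa} = \lambda$, code the sequences $\vec{X} = \langle X_a : a \in P_\kappa\lambda \rangle$ and $\vec{A} = \langle A_a : a \in P_\kappa\lambda \rangle$ into a single $A \subseteq \lambda$. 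Applying weak compactness of $W$ to this $A$ produces a transitive $M \models \ZFC^-$ with $M^{<\kappa} \cap V \subseteq M$ containing $\lambda, A, W, \vec X, \vec A$, and an elementary $j : M \to N$ with $\crit(j) = \kappa$, $j(\kappa) > \lambda$, and $j"\lambda \in j(W)$.

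By elementarity $j(W) = (\diagonalunion_{<} j(\vec X))^N$, so there is $b$ in $N$ with $b <^N j"\lambda$ and $j"\lambda \in j(\vec X)(b)$. The condition $b <^N j"\lambda$ says $b \subseteq j"\lambda$ with $|b|^N < \kappa_{j"\lambda}^N = |j"\lambda \cap j(\kappa)|^N = |\kappa|^N = \kappa$; here I use that $\kappa$ is a cardinal in $N$, which is arranged by choosing $M$ as a Skolem hull in $H(\lambda^+)$ as in Lemma \ref{lemma_normal_embedding}, so that $N$ inherits the relevant cardinal-preservation. Since $|b|^N < \kappa = \crit(j)$ and $b \subseteq j"\lambda$, there is $a \in P_\kappa\lambda$ with $j"a = b$, and $j(a) = j"a = b$ because $|a| < \kappa$. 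Hence $j(\vec X)(b) = j(X_a)$ by elementarity, giving $j"\lambda \in j(X_a)$. But $A_a \in M$ (via $\vec A \in M$ and $a \in M$), so the witnessing property of $A_a$ applied to $j : M \to N$ forces $j"\lambda \notin j(X_a)$, a contradiction.

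The main technical care-point I anticipate is the existence of a witness $A_a$ for each $X_a \in \NWC_{\kappa,\lambda}$: this should follow from the negation of the weak-compactness definition by a standard absoluteness and coding trick, perhaps by enriching $A_a$ to encode enough of $V$ that any transitive $M$ containing it automatically satisfies the closure hypotheses appearing in the definition of weak compactness. A secondary point is the verification that $\kappa$ remains a cardinal in $N$, handled by the Skolem-hull choice of $M$ mentioned above. Apart from these considerations, the argument is a direct adaptation of the classical proof that the weakly compact ideal on a single cardinal $\kappa$ is normal, with the diagonal intersection replaced by the strongly normal diagonal union $\diagonalunion_{<}$.
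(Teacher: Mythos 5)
Your argument is correct and follows essentially the same route as the paper: choose a witness $A_a\subseteq\lambda$ for each $X_a\in\NWC_{\kappa,\lambda}$, code the witnesses and the sequence $\vec{X}$ into a single $A\subseteq\lambda$ using $|P_\kappa\lambda|=\lambda$, and observe that any $b\in P_{j(\kappa)_{j"\lambda}}j"\lambda$ has the form $j(a)$ for some $a\in P_\kappa\lambda$ because $\crit(j)=\kappa$, so that $j"\lambda\in j(X_a)$ would contradict the witnessing property of $A_a$. The only cosmetic difference is that the paper shows directly that $A$ witnesses $\diagonalunion_{<}X_a\in\NWC_{\kappa,\lambda}$ rather than arguing by contradiction, and it dismisses properness and closure under subsets and finite unions as routine.
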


\begin{proof}
Let us show that $\NWC_{\kappa,\lambda}$ is strongly normal; the rest is routine. Suppose $Z_a\in \NWC_{\kappa,\lambda}$ for all $a\in P_\kappa \lambda$ and let $Z=\diagonalunion_{\sqsubset}\{Z_a\st a\in P_\kappa \lambda\}=_{\defn}\{x\in P_\kappa \lambda\st \text{$x\in Z_a$ for some $a\in P_{\kappa_x}x$}\}$. For each $a\in P_\kappa \lambda$ there is some $A_a\subseteq \lambda$ witnessing that $Z_a\in\NWC_{\kappa,\lambda}$. Since $|P_\kappa \lambda|=|\lambda|^{<\kappa}=|\lambda|$ there is a single set $A\subseteq \lambda$ coding all of the $A_a$'s as well as the sequence $\vec{Z}=\<Z_a\st a\in P_\kappa \lambda\>$ in the sense that whenever $M$ is transitive with $A\in M$ then $A_a\in M$ for all $a\in P_\kappa \lambda$ and $\vec{Z}\in M$. Clearly we have that for every $a\in P_\kappa \lambda$ the set $A$ witnesses that $Z_a\in\NWC_{\kappa,\lambda}$. Let us argue that $A$ witnesses that $Z\in \NWC_{\kappa,\lambda}$. Suppose $M\models\ZFC^-$ is transitive of size $\lambda$ with $\lambda,A,Z\in M$, $N$ is transitive and $j:M\to N$ is an elementary embedding with critical point $\kappa$ such that $j(\kappa)>\lambda$. We must argue that $j"\lambda\notin j(Z)$. Since $A\in M$ we have $\vec{Z}\in M$ and we let $j(\vec{Z})=\<\bar{Z}_b\st b\in j(P_\kappa \lambda)\>$. Notice that $\bar{Z}_{j(a)}=j(Z_a)$ for all $a\in P_\kappa \lambda$ by elementarity of $j$. By definition of $Z$,
\[j(Z)=\{x\in j(P_\kappa \lambda)\st \text{$x\in \bar{Z}_b$ for some $b\in P_{j(\kappa)_x} x$}\}\]
where $j(\kappa)_{j"\lambda}=|j"\lambda\cap j(\kappa)|^N=\kappa$. For the sake of contradiction, suppose $j"\lambda\in j(Z)$. Then $j"\lambda\in \bar{Z}_b$ for some $b\in P_{j(\kappa)_{j"\lambda}} j"\lambda= P_\kappa j"\lambda$. Since the critical point of $j$ is $\kappa$ we see that $b=j(a)$ for some $a\in P_\kappa \lambda$ and hence $j"\lambda\in \bar{Z}_{j(a)}=j(Z_a)$ for some $a\in P_\kappa \lambda$. This contradicts the fact that $A$ witnesses $Z_a\in \NWC_{\kappa,\lambda}$.
\end{proof}

Since $\NSS_{\kappa,\lambda}$ is the minimal strongly normal ideal on $P_{\kappa}\lambda$ we obtain.

\begin{corollary}\label{corollary_NSS_contained_in_NWC}
If $P_\kappa\lambda$ is weakly compact then $\NSS_{\kappa,\lambda}\subseteq\NWC_{\kappa,\lambda}$.
\end{corollary}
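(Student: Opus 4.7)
The proof should be a one-liner combining the two ingredients that have just been established, so I would present it as a direct consequence of the preceding proposition together with the Carr--Levinski--Pelletier minimality theorem.

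The plan is as follows. By the proposition immediately preceding the corollary, the hypothesis that $P_\kappa\lambda$ is weakly compact gives that $\NWC_{\kappa,\lambda}$ is a strongly normal proper ideal on $P_\kappa\lambda$. On the other hand, Theorem \ref{cpl_minimal_strongly_normal_ideal} of Carr--Levinski--Pelletier asserts that, whenever $P_\kappa\lambda$ admits any strongly normal ideal at all, the ideal $\NSS_{\kappa,\lambda}$ is the minimal one. Hence $\NSS_{\kappa,\lambda}\subseteq \NWC_{\kappa,\lambda}$.

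The only point that needs a brief sanity check is that the hypotheses of Theorem \ref{cpl_minimal_strongly_normal_ideal} are in force, i.e.\ that $\kappa$ is Mahlo (or a successor of a $\mu$ with $\mu^{<\mu}=\mu$). But weak compactness of $P_\kappa\lambda$ in the sense of Definition \ref{definition_weakly_compact} produces, for any $A\subseteq\lambda$, a transitive $M\models\ZFC^-$ containing $\lambda$ and an elementary embedding $j\colon M\to N$ with critical point $\kappa$ and $j(\kappa)>\lambda\geq\kappa$; restricting attention to $A\subseteq\kappa$ and to the induced embedding on an appropriate $M$ shows at once that $\kappa$ is weakly compact in the usual one-cardinal sense, and in particular Mahlo. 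Thus Theorem \ref{cpl_minimal_strongly_normal_ideal} applies and the argument above goes through.

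There is no real obstacle: the content of the corollary is entirely housed in the two cited results, and the only thing to write is the citation of the preceding proposition, the appeal to the minimality clause of Theorem \ref{cpl_minimal_strongly_normal_ideal}, and (if one wishes to be pedantic) the observation that $\kappa$ is Mahlo.
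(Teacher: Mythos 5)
Your proof is correct and is exactly the paper's argument: the preceding proposition makes $\NWC_{\kappa,\lambda}$ a strongly normal proper ideal, and Theorem \ref{cpl_minimal_strongly_normal_ideal} then gives $\NSS_{\kappa,\lambda}\subseteq\NWC_{\kappa,\lambda}$ by minimality. Your extra check that $\kappa$ is Mahlo is harmless but not needed, since the mere existence of the strongly normal proper ideal $\NWC_{\kappa,\lambda}$ already puts you in the case of Theorem \ref{cpl_minimal_strongly_normal_ideal} where $\NSS_{\kappa,\lambda}$ is minimal.
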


To see that $\NSS_{\kappa,\lambda}\subsetneq\NWC_{\kappa,\lambda}$ when $P_\kappa\lambda$ is weakly compact, let us consider the set $S=\{x\in P_\kappa\lambda\st |x\cap\kappa|=|x|\}$, which played an important role in various results on almost disjoint partitions of elements of $\NS_{\kappa,\lambda}^+$ (see the discussion around Proposition 25.5 in \cite{Kanamori:Book}).

\begin{proposition}[Proposition 25.5, \cite{Kanamori:Book}]\label{proposition_kanamori_S}
Suppose that $\kappa\leq\lambda$ and $S=\{x\in P_\kappa\lambda\st |x\cap\kappa|=|x|\}$. Then:
\begin{enumerate}
\item[(a)] $S\in\NS_{\kappa,\lambda}^+$
\item[(b)] If $\kappa$ is a successor cardinal then $S\in \NS_{\kappa,\lambda}^*$.
\item[(c)] If $\kappa<\lambda$ and $\kappa$ is $\lambda$-supercompact then $S\notin \NS_{\kappa,\lambda}^*$.
\item[(d)] (Baumgartner) If $X\subseteq S$ is stationary then $X$ can be partitioned into $\lambda$ disjoint stationary sets.
\end{enumerate}
\end{proposition}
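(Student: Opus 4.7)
The plan is to establish (a)--(d) by four largely independent arguments.

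For (a), given a club $C \subseteq P_\kappa\lambda$, I would build an increasing $\omega$-chain $y_0 \subseteq y_1 \subseteq \cdots$ with each $y_n \in C$ by picking, at stage $n$, a cardinal $\mu_n < \kappa$ strictly larger than $|y_n|$ and using the unboundedness of $C$ to choose $y_{n+1} \in C$ containing $y_n \cup \mu_n$. Closure of $C$ under unions of increasing chains of length $<\kappa$ yields $x := \bigcup_n y_n \in C$, and by construction
\[
\sup_n \mu_n \leq |x \cap \kappa| \leq |x| = \sup_n |y_n|,
\]
with the two outer suprema equal since $|y_n| < \mu_n \leq |y_{n+1}|$. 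Hence $x \in S \cap C$, showing $S$ is stationary.

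For (b), with $\kappa = \mu^+$, it suffices to exhibit a club contained in $S$. Since $\mu \in P_\kappa\lambda$, the set $D = \{x \in P_\kappa\lambda : \mu \subseteq x\}$ is a club (generated by the function $x \mapsto x \cup \mu$), and for any $x \in D$ one has $\mu \leq |x \cap \kappa| \leq |x| \leq \mu$, so $D \subseteq S$.

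For (c), assuming $\kappa < \lambda$ and $\kappa$ is $\lambda$-supercompact, I would fix a normal fine $\kappa$-complete ultrafilter $U$ on $P_\kappa\lambda$ with ultrapower embedding $j : V \to M$ satisfying $\crit(j) = \kappa$, $j(\kappa) > \lambda$, and $M^\lambda \cap V \subseteq M$. Recall that $A \in U$ iff $j"\lambda \in j(A)$. Since $j \restrict \kappa$ is the identity, $j"\lambda \cap j(\kappa) = j"\kappa = \kappa$, so $|j"\lambda \cap j(\kappa)|^M = \kappa$; on the other hand a bijection $\lambda \to j"\lambda$ exists in $V$ and lies in $M^\lambda \cap V \subseteq M$, so $|j"\lambda|^M = \lambda > \kappa$. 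Hence $j"\lambda \notin j(S)$, so $S \notin U$, and since $U$ extends the club filter this gives $S \notin \NS_{\kappa,\lambda}^*$.

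Part (d) is Baumgartner's classical partition theorem and I expect it to be the main obstacle. The key observation is that for $x \in S$ the equality $|x \cap \kappa| = |x|$ furnishes a bijection $h_x : x \cap \kappa \to x$, allowing each element of $x$ to be coded by an ordinal below $|x \cap \kappa|$. For each $\beta < \lambda$ the map $f_\beta(x) = h_x^{-1}(\beta)$ is then regressive (in the sense $f_\beta(x) \in x$) on the fine set $\{x \in X : \beta \in x\}$, and applying normality of $\NS_{\kappa,\lambda} \restrict X$ to each $f_\beta$ produces stationary fibers $X_\beta$. The delicate part is refining the family $\{X_\beta : \beta < \lambda\}$ into $\lambda$ pairwise disjoint stationary pieces via a diagonal bookkeeping across all $\lambda$ coordinates; for the complete argument I would refer the reader to Kanamori \cite{Kanamori:Book}.
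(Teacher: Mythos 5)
The paper offers no proof of this proposition at all: it is quoted verbatim from Kanamori (Proposition 25.5 of \cite{Kanamori:Book}) and used as a black box, so there is no argument of the paper's to compare against. Your proofs of (a)--(c) are correct and are the standard ones: the $\omega$-chain argument for stationarity, the club $\{x : \mu\subseteq x\}$ when $\kappa=\mu^+$, and the computation $|j"\lambda\cap j(\kappa)|^M=\kappa<\lambda=|j"\lambda|^M$ showing $j"\lambda\notin j(S)$, which rules out $S$ containing a club since every club lies in the normal fine ultrafilter. One small caveat on (a): the instruction to pick a cardinal $\mu_n<\kappa$ \emph{strictly} larger than $|y_n|$ can fail when $\kappa=\mu^+$ and $|y_n|=\mu$, so the chain construction as written really only covers limit $\kappa$; this is harmless because the successor case is already settled by your proof of (b) (or by taking $\mu_n=\mu$ throughout), but it deserves a sentence. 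For (d), your sketch captures the right starting point (the bijections $h_x:x\cap\kappa\to x$ and regressive functions $f_\beta$), and deferring the genuinely delicate splitting argument to Kanamori is entirely consistent with how the paper itself treats the whole proposition.
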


Straight forward arguments show that if $\NSS_{\kappa,\lambda}$ is nontrivial, Proposition \ref{proposition_kanamori_S} can be improved by replacing $\NS_{\kappa,\lambda}$ with $\NSS_{\kappa,\lambda}$ in (a) and (b) and by weakening the hypothesis and strengthening the conclusion of (c).

\begin{proposition}
Suppose $\kappa\leq\lambda$ and $S=\{x\in P_\kappa\lambda\st |x\cap\kappa|=|x|\}$. Then the following hold.
\begin{enumerate}
\item[(a)] If $\NSS_{\kappa,\lambda}$ is nontrivial then $S\in \NSS_{\kappa,\lambda}^+$.
\item[(b)] If $\kappa$ is a successor cardinal then $S\in \NSS_{\kappa,\lambda}^*$.
\item[(c)] If $P_\kappa\lambda$ is weakly compact then $S\in\NWC_{\kappa,\lambda}$.
\end{enumerate}
\end{proposition}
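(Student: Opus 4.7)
I plan to dispatch the three parts in the order (b), (a), (c): part (b) reduces to a cited fact, part (a) uses it as a subcase, and part (c) is a standalone bijection-chasing argument. For (b), the preceding Kanamori proposition already yields $S\in\NS_{\kappa,\lambda}^*$ whenever $\kappa$ is a successor cardinal. Under the nontriviality hypothesis $\kappa=\mu^+$ with $\mu^{<\mu}=\mu$ (which by Theorem~\ref{cpl_minimal_strongly_normal_ideal} is exactly what renders $\NSS_{\kappa,\lambda}$ nontrivial in the successor case), one has $\NS_{\kappa,\lambda}\subseteq\NSS_{\kappa,\lambda}$, and dualizing this inclusion gives $\NS_{\kappa,\lambda}^*\subseteq\NSS_{\kappa,\lambda}^*$; this promotes the Kanamori fact to $S\in\NSS_{\kappa,\lambda}^*$.

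For (a) I would split cases via Theorem~\ref{cpl_minimal_strongly_normal_ideal}. The successor case is immediate from (b). In the Mahlo case, given $f\colon P_\kappa\lambda\to P_\kappa\lambda$, I would produce $x\in C_f\cap S$ via a continuous elementary chain $\langle N_\alpha:\alpha<\kappa\rangle$ with $N_\alpha\prec H(\theta)$, $f,\lambda,\kappa\in N_0$, $\alpha\subseteq N_\alpha$, $|N_\alpha|<\kappa$, and $N_{\alpha+1}$ containing $[N_\alpha]^{<|N_\alpha|^+}$ at each successor stage. The set of $\alpha<\kappa$ for which simultaneously $N_\alpha\cap\kappa=\alpha$ and $|N_\alpha|=\alpha$ is a club, so by Mahlo-ness it meets the stationary set of inaccessibles below $\kappa$. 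Fix such an inaccessible $\delta$ and set $x:=N_\delta\cap\lambda$. Then $|x|=|x\cap\kappa|=\delta$, so $x\in S$; continuity and successor-closure give $N_\delta^{<\delta}\subseteq N_\delta$, so for $y\in P_\delta x$ one has $y\in N_\delta$, hence $f(y)\in N_\delta$, and elementarity forces $|f(y)|<\delta$ and $f(y)\subseteq N_\delta\cap\lambda=x$, witnessing $f(y)\in P_{\kappa_x}x$ and therefore $x\in C_f$.

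For (c) I would argue directly from Definition~\ref{definition_weakly_compact}, reading the hypothesis as $\kappa<\lambda$ (otherwise $S=P_\kappa\lambda$ is itself weakly compact and the statement fails). Suppose $S$ were weakly compact. Applying the definition with $A=\emptyset$ yields a transitive $M\models\ZFC^-$, a transitive $N$, and an elementary embedding $j\colon M\to N$ with critical point $\kappa$, $j(\kappa)>\lambda$, and $j''\lambda\in j(S)$. By elementarity $j''\lambda\cap j(\kappa)=j''\kappa$, so the membership $j''\lambda\in j(S)$ unfolds to $|j''\kappa|^N=|j''\lambda|^N$, producing a bijection $b\in N$ from $j''\kappa$ onto $j''\lambda$. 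But $j\restrict\kappa\colon\kappa\to j''\kappa$ and $j\restrict\lambda\colon\lambda\to j''\lambda$ are genuine bijections in $V$, so $(j\restrict\lambda)^{-1}\circ b\circ(j\restrict\kappa)$ is a bijection $\kappa\to\lambda$ in $V$, contradicting $|\kappa|<|\lambda|$.

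The main obstacle is the Mahlo-case bookkeeping in (a): one must arrange for the reflection club of the elementary chain to meet the stationary set of inaccessibles below $\kappa$ while simultaneously preserving the closure property $N_\delta^{<\delta}\subseteq N_\delta$ at the reflection point, which is why the closure $[N_\alpha]^{<|N_\alpha|^+}$ at successor stages is built into the construction. Parts (b) and (c), by contrast, amount to a dualization of a cited fact and a short bijection chase.
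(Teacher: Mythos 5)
Your proof is correct, and in parts (a) and (b) it takes a genuinely different route from the paper's. For (b) the paper directly exhibits the weak club $\{x\in P_\kappa\lambda \st |x\cap\kappa|=\mu\}\subseteq S$ (for $\kappa=\mu^+$), whereas you dualize the containment $\NS_{\kappa,\lambda}\subseteq\NSS_{\kappa,\lambda}$ and quote Proposition \ref{proposition_kanamori_S}(b); both work, though note that (b) as stated does not assume nontriviality --- in the trivial case $\NSS_{\kappa,\lambda}^*=P(P_\kappa\lambda)$ and there is nothing to prove, so your extra hypothesis costs nothing. The real divergence is the Mahlo case of (a): the paper builds an $\omega$-chain $\<x(n)\st n<\omega\>$ and takes $x(\omega)=\bigcup_{n}x(n)$, which is a point with $\cf(\kappa_{x(\omega)})=\omega$, while you run an elementary chain of length $\kappa$ and reflect to an inaccessible $\delta$, landing at $x=N_\delta\cap\lambda$ with $\kappa_x=\delta$ regular. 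Your route is the more robust one: a set $y\in P_{\kappa_{x(\omega)}}x(\omega)$ need not lie in any single $P_{\kappa_{x(n)}}x(n)$, and Lemma \ref{lemma_make_closure_point_regular} shows that for suitable $f$ every closure point must have $\kappa_x$ weakly inaccessible, so an $\omega$-limit cannot in general land in $C_f$; the $[N_\alpha]^{<|N_\alpha|^+}$-closure you impose at successor stages, together with the regularity of $\delta$, is exactly what guarantees $[N_\delta]^{<\delta}\subseteq N_\delta$ and hence $x\in C_f\cap S$. Part (c) is essentially the paper's argument --- both reduce $j"\lambda\in j(S)$ to a bijection between $j"\kappa$ and $j"\lambda$ inside the transitive $N$, hence to $\kappa=\lambda$ in $V$ --- and you are right that the statement implicitly requires $\kappa<\lambda$, an assumption the paper also uses silently when it asserts $|\kappa|<|j"\lambda|$ in $N$.
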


\begin{proof}
For (a), first notice that if $\kappa$ is a successor then by Proposition \ref{proposition_kanamori_S} (a), $S\in \NS_{\kappa,\lambda}^*\subseteq \NSS_{\kappa,\lambda}^+$. On the other hand if $\kappa$ is a limit, then by Theorem \ref{cpl_minimal_strongly_normal_ideal}, $\kappa$ is Mahlo since $\NSS_{\kappa,\lambda}$ is nontrivial. Fix $f:P_\kappa\lambda\to P_\kappa\lambda$ and recursively define $x(n)$ and $y(n)$ as follows. Let $y(n+1)=\bigcup f[P_{\kappa_{x(n)}}x(n)]\in P_\kappa\lambda$ and define $x(n+1)=y(n)\cup|y(n)|$. Notice that $x(n+1)\in S$. Now it follows that $x(\omega)=_{\defn}\bigcup_{n<\omega}x(n)\in S$ \marginpar{\tiny Double check this: I think $S$ is closed under countable unions. See proof of Prop. 25.5 (a) in Kanamori, page 343.} and $f[P_{\kappa_{x(\omega)}}x(\omega)]\subseteq P_{\kappa_{x(\omega)}}x(\omega)$.

For (b), if $\kappa$ is a successor, say $\kappa=\mu^+$ then $\{x\in P_\kappa\lambda\st |x\cap\kappa|=\mu\}$ is in $\NSS_{\kappa,\lambda}^*$ and is a subset of $S$.

For (c), fix $A\subseteq\lambda$ and let $M$ be a $(\kappa,\lambda)$-model\marginpar{\tiny Change terminology.} with $A,S\in M$. Since $P_\kappa\lambda$ is weakly compact there is a $j:M\to N$ with critical point $\kappa$ such that $j(\kappa)>\lambda$ and $j"\lambda\in N$. In $N$ we have $|\kappa|<|j"\lambda|$, and hence $j"\lambda\in j((P_\kappa\lambda)\setminus S)$. Thus $S\in \NWC_{\kappa,\lambda}$.
\end{proof}

\begin{corollary}
If $P_\kappa\lambda$ is weakly compact then $\NSS_{\kappa,\lambda}\subsetneq \NWC_{\kappa,\lambda}$.
\end{corollary}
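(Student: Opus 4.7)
The plan is to derive this directly from the preceding proposition, which already does essentially all of the work. The containment $\NSS_{\kappa,\lambda}\subseteq \NWC_{\kappa,\lambda}$ is exactly the content of Corollary \ref{corollary_NSS_contained_in_NWC}, so only strictness requires argument.

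For strictness, I would exhibit a single set lying in $\NWC_{\kappa,\lambda}\setminus\NSS_{\kappa,\lambda}$, and the natural candidate is the set
\[
S=\{x\in P_\kappa\lambda\st |x\cap\kappa|=|x|\}
\]
that was just analyzed. By part (c) of the preceding proposition, $S\in\NWC_{\kappa,\lambda}$, so all that remains is to observe $S\notin\NSS_{\kappa,\lambda}$, i.e., $S\in\NSS_{\kappa,\lambda}^+$. To invoke part (a) of the same proposition I need $\NSS_{\kappa,\lambda}$ to be nontrivial. This follows because weak compactness of $P_\kappa\lambda$ (with $A$ chosen to code any fixed subset of $\kappa$ into $M$) immediately yields that $\kappa$ is a weakly compact cardinal and in particular Mahlo, so Theorem \ref{cpl_minimal_strongly_normal_ideal} provides nontriviality of $\NSS_{\kappa,\lambda}$. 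Alternatively, nontriviality follows from the containment $\NSS_{\kappa,\lambda}\subseteq\NWC_{\kappa,\lambda}$ together with properness of $\NWC_{\kappa,\lambda}$.

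With $\NSS_{\kappa,\lambda}$ nontrivial, part (a) gives $S\in\NSS_{\kappa,\lambda}^+$, so combining with $S\in\NWC_{\kappa,\lambda}$ we get $S\in\NWC_{\kappa,\lambda}\setminus\NSS_{\kappa,\lambda}$, witnessing the strict inclusion. There is essentially no obstacle here: the preceding proposition has bundled together all of the nontrivial content (the strong stationarity of $S$ and the fact that any embedding witnessing weak compactness sends $j"\lambda$ outside $j(S)$ because $|\kappa|<|j"\lambda|$ in $N$), and this corollary is merely recording the conclusion.
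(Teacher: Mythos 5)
Your proof is correct and matches the paper's argument exactly: the containment is Corollary \ref{corollary_NSS_contained_in_NWC}, and strictness is witnessed by $S=\{x\in P_\kappa\lambda\st |x\cap\kappa|=|x|\}$ via parts (a) and (c) of the preceding proposition. Your extra remark justifying the nontriviality of $\NSS_{\kappa,\lambda}$ (needed to invoke part (a)) is a sensible addition that the paper leaves implicit.
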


\begin{proof}
If $P_\kappa\lambda$ is weakly compact then $\NSS_{\kappa,\lambda}\subseteq\NWC_{\kappa,\lambda}$ by  Corollary \ref{corollary_NSS_contained_in_NWC} and $S=\{x\in P_\kappa\lambda\st |x\cap\kappa|=|x|\}\in\NWC_{\kappa,\lambda}\setminus\NSS_{\kappa,\lambda}$.
\end{proof}

\section{Indescribability of subsets of $P_\kappa\lambda$}\label{section_indescribability}

According to \cite{MR1635559} and \cite{MR808767}, in a set of handwritten notes, Baumgartner \cite{Baum} defined a notion of indescribability for subsets of $P_\kappa\lambda$ as follows. Give a regular cardinal $\kappa$ and a set of ordinals $A$ with $\kappa\leq|A|$, consider the hierarchy:
\begin{align*}
V_0(\kappa,A)&=A\\
V_{\alpha+1}(\kappa,A)&=P_\kappa(V_\alpha(\kappa,A))\cup V_{\alpha}(\kappa,A)\\
V_\alpha(\kappa,A)&=\bigcup_{\beta<\alpha}V_\beta(\kappa,A) \text{ for $\alpha$ a limit}
\end{align*}
Clearly $V_\kappa\subseteq V_\kappa(\kappa,A)$ and if $A$ is transitive then so is $V_\alpha(\kappa,A)$ for all $\alpha\leq\kappa$. See \cite[Section 4]{MR808767} for a discussion of the restricted axioms of $\ZFC$ satisfied by $V_\kappa(\kappa,\lambda)$ when $\kappa$ is inaccessible.

\begin{definition}[Baumgartner \cite{Baum}]\label{definition_indescribable}
Suppose $\kappa$ is a regular cardinal and $A$ is a set of ordinals with $\kappa\leq|A|$. Let $S\subseteq P_\kappa A$. We say that $S$ is \emph{$\Pi^1_n$-indescribable in $P_\kappa A$} if whenever $(V_\kappa(\kappa,A),\in,R_1,\ldots,R_k)\models\varphi$ where $k<\omega$, $R_1,\ldots,R_k\subseteq V_\kappa(\kappa,A)$ and $\varphi$ is a $\Pi^1_n$ sentence, there is an $x\in S$ such that
\[\text{$x\cap\kappa=\kappa_x$ and $(V_{\kappa_x}(\kappa_x,x),\in, R\cap V_{\kappa_x}(\kappa_x,x))\models\varphi$.}\] 
\marginpar{\tiny We need to mention that it would be equivalent to use finitely many predicates.}
\marginpar{\tiny For the weakly compact reflection principle section we need to define the $\Pi^1_1$-indescribable ideal on $P_{\kappa_x}x$ for $x\in P_\kappa\lambda$. Then, we need Carr's Theorem for this notion; that is, we want $X\subseteq P_{\kappa_x}x$ is $\Pi^1_1$-indescribable iff $\NUP_{\kappa_x,x,X}$ iff $X$ is weakly compact.}
\end{definition}

Abe \cite[Lemma 4.1]{MR1635559} showed that if $P_\kappa A$ is $\Pi^1_n$-indescribable then 
\[\Pi^1_n(\kappa,A)=\{X\subseteq P_\kappa A\st \text{$X$ is not $\Pi^1_n$-indescribable}\}\]
is a strongly normal proper ideal on $P_\kappa A$.

\begin{lemma}\cite[page 270]{MR1635559}\label{lemma_sentence}
Assuming $|A|\geq\kappa$, there is a $\Pi^1_1$-sentence $\sigma$ such that $(V_\kappa(\kappa,A),\in)\models\sigma$ if and only if $\kappa$ is inaccessible. 
\end{lemma}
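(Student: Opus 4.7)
The plan is to take $\sigma$ to be a first-order sentence in the language $\{\in\}$, which is a fortiori $\Pi^1_1$, that internally identifies an ordinal $\kappa_0$ of $V_\kappa(\kappa,A)$ and asserts inaccessibility-type properties of that ordinal. The hypothesis $|A|\geq\kappa$ combined with the containment $V_\kappa\subseteq V_\kappa(\kappa,A)$ supplies enough ordinals in the structure to make this internal identification work.

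First I would define $\kappa_0$ inside the structure as the least ordinal $\alpha$ whose power set is not an element of the structure. By the $P_\kappa$-step of the hierarchy, $P(\alpha)\in V_\kappa(\kappa,A)$ exactly when $2^\alpha<\kappa$. Consequently, if $\kappa$ is strong limit then $\kappa_0=\kappa$ (in case $\kappa\in A$) or $\kappa_0$ equals the least element of $A$ above $\kappa$ (in case $\kappa\notin A$); while if $\kappa$ is not strong limit, then $\kappa_0$ is some ordinal strictly less than $\kappa$.

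Next I would take $\sigma$ to be the conjunction of three first-order clauses: (i) $\kappa_0$ is an uncountable cardinal of the structure; (ii) no set of the structure of internal cardinality strictly less than $\kappa_0$ is cofinal in $\kappa_0$; and (iii) every ordinal of the structure above $\kappa_0$ is a cardinal of the structure. The verification I would carry out by cases, using the rigidity of internal cardinality above $\kappa$: any bijection between two ordinals in $[\kappa,\infty)$ would have size at least $\kappa$ and is therefore absent from the structure, so every ordinal of $V_\kappa(\kappa,A)$ lying above $\kappa$ is an internal cardinal, whereas below $\kappa$ internal cardinal arithmetic agrees with $V$. Clause (iii) then distinguishes the case $\kappa_0=\kappa$ from $\kappa_0<\kappa$ (in the latter case $\kappa_0+1<\kappa$ is an ordinal of the structure but not a cardinal of $V$, hence not of the structure), which detects failure of strong limit; clause (ii) detects singularity of $\kappa$, since a cofinal sequence of length $\cf(\kappa)<\kappa$ in $\kappa$ sits in $V_\kappa\subseteq V_\kappa(\kappa,A)$ and is cofinal in $\kappa_0$.

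The main obstacle is the corner case $\kappa\notin A$, in which $\kappa_0$ lies strictly above $\kappa$ and does not coincide with $\kappa$ externally. I would handle this by verifying that clause (ii), interpreted internally, still encodes regularity of $\kappa$: any set $X\in V_\kappa(\kappa,A)$ of internal cardinality less than $\kappa_0$ admits a bijection in the structure to some ordinal $\beta$ below $\kappa_0$, and every such $\beta$ is strictly below $\kappa$ (since no ordinal in the interval $[\kappa,\kappa_0)$ lies in the structure), so $X$ has external cardinality below $\kappa$; under regularity of $\kappa$ such $X$ is bounded in $\kappa$ and hence in $\kappa_0$, while under singularity of $\kappa$ the witnessing cofinal sequence violates (ii). This reduction of internal regularity to external regularity of $\kappa$ is the technical heart of the proof.
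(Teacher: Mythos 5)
The paper does not actually prove this lemma (it only cites Abe), so I am judging your argument on its own terms. There is a genuine gap, and it sits exactly where the $\Pi^1_1$ quantifier in the statement is supposed to do its work: your clause (ii) is first-order, quantifying only over \emph{elements} of $V_\kappa(\kappa,A)$, but the witnesses to singularity of $\kappa$ are not reliably elements of that structure. Your justification --- that a cofinal sequence of length $\cf(\kappa)<\kappa$ ``sits in $V_\kappa\subseteq V_\kappa(\kappa,A)$'' --- is false: a cofinal subset of $\kappa$ contains ordinals of rank cofinal in $\kappa$, so it has rank $\kappa$ and is never an element of $V_\kappa$. Whether it is an element of $V_\kappa(\kappa,A)$ depends on $A$: if $\kappa\subseteq A=V_0(\kappa,A)$ it does appear, already in $P_\kappa(A)\subseteq V_1(\kappa,A)$, but the hypothesis is only $|A|\geq\kappa$. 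If, say, $A\subseteq[\kappa,2\kappa)$, the ordinals below $\kappa$ enter the hierarchy cofinally in $\kappa$ stages, so no cofinal subset of $\kappa$ of size $<\kappa$ is contained in any single level $V_\alpha(\kappa,A)$ with $\alpha<\kappa$, hence none is an element of $V_\kappa(\kappa,A)$; for such $A$ with $\kappa$ a singular strong limit your first-order $\sigma$ comes out true although $\kappa$ is not inaccessible. This is precisely why the sentence must be $\Pi^1_1$: a cofinal map $f:\cf(\kappa)\to\kappa$ is always a \emph{subset} of $V_\kappa(\kappa,A)$ (a set of pairs of ordinals of the structure), so regularity has to be expressed by a universal second-order quantifier, not by quantification over elements.

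The case you single out as the technical heart, $\kappa\notin A$, is also mishandled. There $\kappa_0$ is the least element of $A$ above $\kappa$, so $\kappa_0>\kappa$, and since no ordinal of the structure lies in the interval $[\kappa,\kappa_0)$, \emph{nothing} in the structure is cofinal in $\kappa_0$: your candidate witness, a small cofinal subset of $\kappa$, is bounded in $\kappa_0$ by $\kappa$ itself. Hence clause (ii) holds vacuously whenever $\kappa\notin A$, regardless of whether $\kappa$ is regular, and the claim that ``the witnessing cofinal sequence violates (ii)'' does not go through. (There is also the degenerate case $A\subseteq\kappa$ with $\kappa$ strong limit, where every ordinal of the structure has an internal power set and $\kappa_0$ is undefined.) The repair is to abandon the attempt to name $\kappa$ by an element $\kappa_0$ and instead use the first-order definable class of ``small'' ordinals of the structure (those $\alpha$ for which some element of the structure is a function with domain $\alpha$; this class is exactly $\kappa$, by the rigidity observation you correctly make, namely that no set of size $\geq\kappa$ other than the ordinals in $A$ belongs to the structure). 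One then states uncountability and strong limitness of the class of small ordinals first-order, and regularity as the genuinely $\Pi^1_1$ clause that every second-order function from a small ordinal into the small ordinals has range bounded by a small ordinal. Your first-order ingredients --- internal cardinality agrees with $V$ below $\kappa$, and every ordinal of the structure of size $\geq\kappa$ is an internal cardinal --- are correct and usable; it is the regularity clause that must be reworked.
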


\begin{lemma}\cite[Lemma 1.3]{MR1635559}\label{lemma_Abe_basic_properties_of_hierarchy}
Suppose $\kappa\leq\lambda$ are cardinals and $\kappa$ is regular.
\begin{enumerate}
\item $V_\kappa(\kappa,\lambda)=\bigcup_{x\in P_\kappa\lambda}V_{\kappa_x}(\kappa_x,x)$.
\item If $y\sqsubset x$ then $V_{\kappa_y}(\kappa_y,y)\in V_{\kappa_x}(\kappa_x,x)$.
\item If $\kappa_x= x\cap \kappa$ is inaccessible then $V_{\kappa_x}(\kappa_x,x)=\bigcup_{y\sqsubset x}V_{\kappa_y}(\kappa_y,y)$.
\item For any bijection $h:V_\kappa(\kappa,\lambda)\to P_\kappa\lambda$, $\{x\in P_\kappa\lambda\st h[V_{\kappa_x}(\kappa_x,x)]=P_{\kappa_x}x\}\in \NSS_{\kappa,\lambda}^*$.
\item If $\kappa$ is inaccessible then $\{x\in P_\kappa\lambda\st V_{\kappa_x}(\kappa_x,x)\prec V_\kappa(\kappa,\lambda)\}\in \NSS_{\kappa,\lambda}^*$.
\end{enumerate}
\end{lemma}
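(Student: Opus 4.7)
The plan is to handle the five items in order, since (4) and (5) build on the structural facts (1)--(3). For (1), I would prove both inclusions by induction on $\alpha<\kappa$. The containment $\bigcup_{x\in P_\kappa\lambda}V_{\kappa_x}(\kappa_x,x)\subseteq V_\kappa(\kappa,\lambda)$ follows from a routine induction showing $V_\alpha(\kappa_x,x)\subseteq V_\alpha(\kappa,\lambda)$ whenever $x\subseteq\lambda$ and $\alpha\leq\kappa_x$, using monotonicity of the hierarchy in both arguments. For the reverse direction, given $z\in V_\alpha(\kappa,\lambda)$ for some $\alpha<\kappa$, the transitive closure $\tc(\{z\})$ has cardinality less than $\kappa$; letting $x\in P_\kappa\lambda$ contain all ordinals appearing in $\tc(\{z\})\cap\lambda$ together with enough of $\kappa$ to ensure $\kappa_x>\alpha$, a parallel induction shows $z\in V_{\kappa_x}(\kappa_x,x)$. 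For (2), note that $y<x$ means $y\in P_{\kappa_x}x$, whence $y\subseteq x$ and $\kappa_y\leq|y|<\kappa_x$. An induction on $\alpha\leq\kappa_y$ gives $V_\alpha(\kappa_y,y)\subseteq V_\alpha(\kappa_x,x)$, and at $\alpha=\kappa_y$ one verifies that $V_{\kappa_y}(\kappa_y,y)$, being a subset of $V_{\kappa_y}(\kappa_x,x)$ of size less than $\kappa_x$ (the cardinal bookkeeping is where a little care is needed), lies in $V_{\kappa_y+1}(\kappa_x,x)\subseteq V_{\kappa_x}(\kappa_x,x)$. Item (3) then combines (1) and (2): the inclusion $\supseteq$ is (2), and the reverse inclusion uses the argument from (1) applied inside $V_{\kappa_x}(\kappa_x,x)$, with inaccessibility of $\kappa_x$ ensuring that every $z\in V_{\kappa_x}(\kappa_x,x)$ can be captured by some $y\in P_{\kappa_x}x$.

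For (4), I would exhibit a single $f:P_\kappa\lambda\to P_\kappa\lambda$ whose strongly club set $C_f$ is contained in $\{x:h[V_{\kappa_x}(\kappa_x,x)]=P_{\kappa_x}x\}$. The idea is to define $f(y)$ so as to contain all ordinals from $\lambda$ witnessing both the $h$-image and the $h$-preimage of the coded content of $y$. Then for $x\in C_f$ with $\kappa_x$ inaccessible, the closure condition $f[P_{\kappa_x}x]\subseteq P_{\kappa_x}x$, together with the descriptions (1) and (3) of $V_{\kappa_x}(\kappa_x,x)$ as $\bigcup_{y<x}V_{\kappa_y}(\kappa_y,y)$, forces $h$ restricted to $V_{\kappa_x}(\kappa_x,x)$ to land exactly in $P_{\kappa_x}x$, with preimages of elements of $P_{\kappa_x}x$ reconstructed inside $V_{\kappa_x}(\kappa_x,x)$. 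Since the set of $x$ with $\kappa_x$ inaccessible lies in $\NSS_{\kappa,\lambda}^*$ (by Lemma \ref{lemma_sentence} together with strong normality of $\NSS_{\kappa,\lambda}$ from Theorem \ref{cpl_minimal_strongly_normal_ideal}), intersecting with $C_f$ yields the desired conclusion.

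For (5), this is a standard L\"owenheim--Skolem argument adapted to the strongly normal setting. Fix a countable collection of Skolem functions for $(V_\kappa(\kappa,\lambda),\in)$, code them (together with the bijection coding from (4) applied to a fixed bijection $V_\kappa(\kappa,\lambda)\to P_\kappa\lambda$) into a single $f:P_\kappa\lambda\to P_\kappa\lambda$, and observe that for $x\in C_f$ with $\kappa_x$ inaccessible, $V_{\kappa_x}(\kappa_x,x)$ is closed under the Skolem functions, so Tarski--Vaught gives $V_{\kappa_x}(\kappa_x,x)\prec V_\kappa(\kappa,\lambda)$. The main obstacle I expect is item (4): ensuring that the coding function $f$ really forces the bijection $h$ to restrict cleanly along the strongly club trace requires using (1)--(3) inside $V_{\kappa_x}(\kappa_x,x)$ to guarantee that every element of $P_{\kappa_x}x$ has an $h$-preimage residing in $V_{\kappa_x}(\kappa_x,x)$, and the cardinal-arithmetic verification in (2) underlying this step is the technically delicate part.
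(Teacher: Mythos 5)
The paper itself proves only a modified form of item (5) — as Lemma \ref{lemma_weak_club_of_substructures}, supported by Lemmas \ref{lemma_subset}, \ref{lemma_make_closure_point_regular} and \ref{lemma_for_skolem} — and cites Abe for items (1)--(4) without proof. Measured against that, your plan for (5) is essentially the paper's: code Skolem functions for $(V_\kappa(\kappa,\lambda),\in)$ into a single $f:P_\kappa\lambda\to P_\kappa\lambda$ and conclude by Tarski--Vaught at inaccessible closure points. The only difference is organizational: the paper runs an elementary chain $\langle M_i\st i<\kappa_x\rangle$ of Skolem hulls indexed by a continuous increasing sequence $\langle x(i)\rangle$ with $\bigcup_i x(i)=x$, and shows $\bigcup_i M_i=V_{\kappa_x}(\kappa_x,x)$, whereas you argue closure of $V_{\kappa_x}(\kappa_x,x)$ under the Skolem functions directly. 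Both routes rest on the same two pillars, namely Lemma \ref{lemma_for_skolem} (every ${<}\kappa$-sized subset of $V_\kappa(\kappa,\lambda)$ is captured by some $V_{\kappa_y}(\kappa_y,y)$ — this is also the substance of your reverse inclusion in (1), though note that ordinals below $\lambda$ are atoms of this hierarchy, so the transitive-closure framing should be replaced by the induction of Lemma \ref{lemma_for_skolem} applied to $\{z\}$) and item (3)'s representation $V_{\kappa_x}(\kappa_x,x)=\bigcup_{y<x}V_{\kappa_y}(\kappa_y,y)$ for $\kappa_x$ inaccessible.

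Two soft spots. First, your appeal to Lemma \ref{lemma_sentence} to get $\{x\st \kappa_x\text{ is inaccessible}\}\in\NSS_{\kappa,\lambda}^*$ is the wrong tool: that lemma concerns the $\Pi^1_1$-expressibility of inaccessibility and is used in the indescribability arguments, not here. The paper instead builds the requirement $\sup(z\cap\kappa)^+\subseteq f(z)$ directly into the coding function and invokes Lemma \ref{lemma_make_closure_point_regular}, which yields that every closure point $x$ with $x\cap\kappa\in\kappa$ has $\kappa_x$ weakly inaccessible; you should do the same (closing additionally under a function of size $2^{|z\cap\kappa|}$ if strong inaccessibility of $\kappa_x$ is genuinely needed, using that $\kappa$ is inaccessible). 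Second, the cardinal bookkeeping you flag in (2) is a real issue: to get $V_{\kappa_y}(\kappa_y,y)\in V_{\kappa_x}(\kappa_x,x)$ one needs $|V_{\kappa_y}(\kappa_y,y)|<\kappa_x$, and since already $|V_1(\kappa_y,y)|=|y|^{<\kappa_y}$, this can fail for an arbitrary $x$ with $y<x$; it does go through when $\kappa_x$ is a strong limit above $|y|$ and $\kappa_y$, which covers the inaccessible-$\kappa_x$ case that is the only one the paper actually uses (via item (3) inside Lemma \ref{lemma_weak_club_of_substructures}). Neither point is fatal, but both need to be addressed for the write-up to be complete.
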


Abe states Lemma \ref{lemma_Abe_basic_properties_of_hierarchy} without proof; we now present a restatement and proof of Lemma \ref{lemma_Abe_basic_properties_of_hierarchy} (5) (see Lemma \ref{lemma_weak_club_of_substructures} below) since it is vital to our proof of Theorem \ref{lemma_Pi_1_0_ideal} and seems to be somewhat nontrivial.

\begin{lemma}\label{lemma_subset}
Suppose $\kappa\leq\lambda$ are cardinals and $\kappa$ is regular. If $x,y\in P_\kappa\lambda$ and $x\subseteq y$ then $V_{\beta}(\kappa_x,x)\subseteq V_{\beta}(\kappa_y,y)$ for all $\beta\leq\kappa_x$ and $V_{\kappa_x}(\kappa_x,x)\subseteq V_{\kappa_y}(\kappa_y,y)$.
\end{lemma}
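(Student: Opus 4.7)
The plan is to prove the first assertion by transfinite induction on $\beta \leq \kappa_x$, and then derive the second assertion from the first by monotonicity of the hierarchy in the ordinal parameter. The key observation, which will be used at every successor step, is that $x \subseteq y$ forces $x \cap \kappa \subseteq y \cap \kappa$ as subsets of the cardinal $\kappa$, hence $\kappa_x = |x \cap \kappa| \leq |y \cap \kappa| = \kappa_y$; in particular $P_{\kappa_x}(Z) \subseteq P_{\kappa_y}(Z)$ for every set $Z$.

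For the induction, the base case is immediate, since $V_0(\kappa_x, x) = x \subseteq y = V_0(\kappa_y, y)$. The limit case is a simple union-of-unions argument using the inductive hypothesis componentwise. The successor case is where the two key ingredients combine: assuming $V_\beta(\kappa_x, x) \subseteq V_\beta(\kappa_y, y)$, any $z \in V_{\beta+1}(\kappa_x, x)$ either lies in $V_\beta(\kappa_x, x)$, in which case the inductive hypothesis finishes it, or $z \in P_{\kappa_x}(V_\beta(\kappa_x, x))$, in which case $z \subseteq V_\beta(\kappa_x, x) \subseteq V_\beta(\kappa_y, y)$ by induction and $|z| < \kappa_x \leq \kappa_y$, so $z \in P_{\kappa_y}(V_\beta(\kappa_y, y)) \subseteq V_{\beta+1}(\kappa_y, y)$.

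For the second containment, the induction yields $V_{\kappa_x}(\kappa_x, x) \subseteq V_{\kappa_x}(\kappa_y, y)$. An easy subsidiary observation (also by induction on $\gamma$) is that for fixed second parameter $y$ the sequence $\langle V_\gamma(\kappa_y, y) \st \gamma \in \ORD\rangle$ is $\subseteq$-increasing, so $\kappa_x \leq \kappa_y$ gives $V_{\kappa_x}(\kappa_y, y) \subseteq V_{\kappa_y}(\kappa_y, y)$, and chaining the two containments delivers the conclusion.

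I do not anticipate a serious obstacle here; the only place where care is needed is the successor step, to make sure that the smallness witness $|z| < \kappa_x$ is promoted to the required $|z| < \kappa_y$ bound using the inequality $\kappa_x \leq \kappa_y$ rather than, say, inadvertently trying to use $|z| < \kappa$ or confusing the roles of the two cardinal parameters.
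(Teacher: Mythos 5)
Your proposal is correct and follows essentially the same route as the paper: induction on $\beta\leq\kappa_x$, with the base case $x\subseteq y$, the successor step driven by $\kappa_x\leq\kappa_y$ (so $P_{\kappa_x}(Z)\subseteq P_{\kappa_y}(Z)$), and a trivial limit step. Your explicit final step chaining $V_{\kappa_x}(\kappa_x,x)\subseteq V_{\kappa_x}(\kappa_y,y)\subseteq V_{\kappa_y}(\kappa_y,y)$ via monotonicity in the ordinal index is a small point the paper leaves implicit, but the argument is the same.
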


\begin{proof}
Notice that $\kappa_x\leq\kappa_y$. We proceed by induction.  Clearly $V_0(\kappa_x,x)=x\subseteq y =V_0(\kappa_y,y)$. Suppose $V_\alpha(\kappa_x,x)\subseteq V_\alpha(\kappa_y,y)$, then 
\[V_{\alpha+1}(\kappa_x,x)=P_{\kappa_x}(V_\alpha(\kappa_x,x))\cup V_\alpha(\kappa_x,x)\subseteq P_{\kappa_y}(V_\alpha(\kappa_y,y))\cup V_\alpha(\kappa_y,y)=V_{\alpha+1}(\kappa_y,y).\]
Assume that $V_{\alpha}(\kappa_x,x)\subseteq V_{\alpha}(\kappa_y,y)$ for all $\alpha<\gamma\leq\kappa_x$ where $\gamma$ is a limit ordinal. Then $V_{\gamma}(\kappa_x,x)\subseteq V_\gamma(\kappa_y,y)$ follows easily by definition.
\end{proof}

\begin{lemma}\label{lemma_make_closure_point_regular}
Let $\kappa$ be an inaccessible cardinal and $X$ a set of ordinals with $\kappa\leq |X|$. Suppose $f:P_\kappa X\to P_\kappa X$ is a function such that for every $z\in P_\kappa X$ we have $\sup(z\cap\kappa)^+\subseteq f(z)$. For $x\in P_\kappa X$, if $x\cap\kappa\in\kappa$ and $f[P_{\kappa_x}x]\subseteq P_{\kappa_x}x$ then $\kappa_x=x\cap\kappa$ is a weakly inaccessible cardinal.
\end{lemma}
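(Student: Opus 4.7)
The plan is to probe $f$ at carefully chosen small test sets $z \in P_{\kappa_x}x$ and exploit the combination of the growth hypothesis $\sup(z\cap\kappa)^+ \subseteq f(z)$ with the closure hypothesis $f[P_{\kappa_x}x] \subseteq P_{\kappa_x}x$: since the latter forces $|f(z)| < \kappa_x$, the uniform consequence is that for every $z \in P_{\kappa_x}x$ the cardinal $\sup(z\cap\kappa)^+$ is strictly less than $\kappa_x$. Everything else will be read off by varying $z$.

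For the first step I would show simultaneously that $x \cap \kappa$ is a cardinal (hence $\kappa_x = x\cap\kappa$) and that $\kappa_x$ is a limit cardinal. Testing with $z = \{0\}$ first gives $\omega \leq |f(z)| < \kappa_x$, so $\kappa_x$ is uncountable and singletons are always admissible. Then for any $\alpha < x\cap\kappa$, the singleton $z = \{\alpha\}$ lies in $P_{\kappa_x}x$ (since $x\cap\kappa$ is an ordinal, $\alpha \in x$), and the displayed inequality yields $\alpha^+ < \kappa_x$. If one had $\kappa_x < x \cap \kappa$, then $\kappa_x \in x \cap \kappa$ and applying this with $\alpha = \kappa_x$ would give the absurd $\kappa_x^+ < \kappa_x$; hence $\kappa_x = x \cap \kappa$, and the bound $\alpha^+ < \kappa_x$ for every $\alpha < \kappa_x$ says $\kappa_x$ is a limit cardinal.

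For regularity, suppose toward contradiction that $\mu := \cf(\kappa_x) < \kappa_x$, fix a cofinal sequence $\langle \alpha_\xi : \xi < \mu \rangle$ in $\kappa_x$, and set $z = \{\alpha_\xi : \xi < \mu\}$. Since $z \subseteq \kappa_x = x \cap \kappa \subseteq x$ and $|z| \leq \mu < \kappa_x$, we have $z \in P_{\kappa_x}x$, while $\sup(z \cap \kappa) = \kappa_x$; the displayed inequality then forces the contradiction $\kappa_x^+ < \kappa_x$. Combined with the first step, $\kappa_x$ is weakly inaccessible. The main conceptual point, rather than any technical obstacle, is recognizing that the ambient inaccessibility of $\kappa$ enters only to keep the cardinals $\alpha^+$ and $\kappa_x^+$ below $\kappa$; all the substantive work is in selecting the two families of test sets (singletons and a cofinal sequence) and reading off the cardinal-arithmetic consequences.
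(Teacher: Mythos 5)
Your proof is correct and follows essentially the same route as the paper's: both arguments probe $f$ at small test sets $z\in P_{\kappa_x}x$ and extract a contradiction from $\sup(z\cap\kappa)^+\subseteq f(z)\in P_{\kappa_x}x$ forcing $\sup(z\cap\kappa)^+\le |f(z)|<\kappa_x$, using a set cofinal in $x\cap\kappa$ for regularity and sets with large suprema for the rest (the paper tests with $z=\mu$ where $x\cap\kappa=\mu^+$, you test with singletons $\{\alpha\}$; your version has the small advantage of directly yielding $\kappa_x=x\cap\kappa$ rather than inferring it from regularity of the ordinal $x\cap\kappa$). One small slip: under the usual convention $\sup(\{0\})^+=0^+=1$, so the test $z=\{0\}$ gives only $\kappa_x\ge 2$ (which is all you need, namely that singletons are admissible) rather than $\omega\le |f(z)|$ or uncountability of $\kappa_x$; this is harmless, since the rest of your argument never uses uncountability, and the paper's own proof likewise only establishes that $\kappa_x$ is a regular limit cardinal.
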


\begin{proof}
Suppose $x\cap\kappa$ were singular. Then some $z\in P_{\kappa_x}x$ is cofinal in $x\cap\kappa$. But then since $f[P_{\kappa_x}x]\subseteq P_{\kappa_x}x$ we have $\sup(z\cap\kappa)^+\subseteq f(z) \in P_{\kappa_x}x$, which is impossible since $\sup(z\cap\kappa)^+>\kappa_x$. This implies that $x \cap\kappa$ is a regular ordinal and thus a cardinal.

Suppose $x\cap\kappa$ were a successor cardinal, say $x\cap\kappa=\mu^+$. Since $\kappa_x=x\cap\kappa=\mu^+$ it follows that $\mu\in P_{\kappa_x}x$ and since $f[P_{\kappa_x}x]\subseteq P_{\kappa_x}x$ we conclude that $\sup(\mu)^+=\mu^+=\kappa_x\subseteq f(\mu)\in P_{\kappa_x}x$, which is impossible.
\end{proof}

\begin{lemma}\label{lemma_for_skolem}
Suppose $\kappa$ is an inaccessible cardinal and $A$ is a set of ordinals with $\kappa\leq |A|$. If $X\subseteq V_\kappa(\kappa,A)$ and $|X|<\kappa$, then there exists $x\in P_\kappa A$ such that $X\subseteq V_{\kappa_x}(\kappa_x,x)$.
\end{lemma}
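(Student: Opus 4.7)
The statement follows directly from Lemma \ref{lemma_Abe_basic_properties_of_hierarchy}(1) together with Lemma \ref{lemma_subset}, so the plan is essentially to pick witnesses pointwise and then union them up, using inaccessibility of $\kappa$ to stay inside $P_\kappa\lambda$.

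First, for each $a\in X$ I would invoke Lemma \ref{lemma_Abe_basic_properties_of_hierarchy}(1) to select some $x_a\in P_\kappa\lambda$ with $a\in V_{\kappa_{x_a}}(\kappa_{x_a},x_a)$. Next, I would set $x=\bigcup_{a\in X}x_a$. Since $|X|<\kappa$ and each $|x_a|<\kappa$, the regularity of $\kappa$ (which holds because $\kappa$ is inaccessible) guarantees $|x|<\kappa$, so $x\in P_\kappa\lambda$.

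Finally, for each $a\in X$ we have $x_a\subseteq x$, hence $\kappa_{x_a}=|x_a\cap\kappa|\leq|x\cap\kappa|=\kappa_x$, so Lemma \ref{lemma_subset} gives the inclusion
\[V_{\kappa_{x_a}}(\kappa_{x_a},x_a)\subseteq V_{\kappa_x}(\kappa_x,x).\]
In particular $a\in V_{\kappa_x}(\kappa_x,x)$, and since this holds for each $a\in X$ we conclude $X\subseteq V_{\kappa_x}(\kappa_x,x)$, as desired.

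There is no real obstacle here; the only thing to keep an eye on is that all three uses of ``$<\kappa$'' (the size of $X$, the size of each $x_a$, and the regularity needed to bound the size of $x$) fit together, which they do because $\kappa$ is inaccessible.
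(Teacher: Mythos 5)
Your argument is correct: given Lemma \ref{lemma_Abe_basic_properties_of_hierarchy}(1), every $a\in X$ sits in some $V_{\kappa_{x_a}}(\kappa_{x_a},x_a)$, the union $x=\bigcup_{a\in X}x_a$ lands in $P_\kappa\lambda$ by regularity of $\kappa$, and Lemma \ref{lemma_subset} transports each witness into $V_{\kappa_x}(\kappa_x,x)$. The paper proceeds differently: it proves the statement by induction on the least $\alpha<\kappa$ with $X\subseteq V_\alpha(\kappa,\lambda)$, treating the base case $X\subseteq\lambda$ directly, the limit case by unioning witnesses much as you do, and the successor case $X\subseteq P_\kappa(V_\alpha(\kappa,\lambda))\cup V_\alpha(\kappa,\lambda)$ by passing to $X'=(\bigcup A)\cup B$ to drop one level and then enlarging $y$ to some $x$ with $\kappa_y<\kappa_x$ so that the extra power-set step is absorbed. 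The trade-off is that your proof delegates all of the inductive content to Lemma \ref{lemma_Abe_basic_properties_of_hierarchy}(1), which the paper imports from Abe \emph{without proof}; note that proving that identity at successor levels requires essentially the same ``collect fewer than $\kappa$ many witnesses and climb one level'' maneuver that the paper carries out here (it is not enough to handle singletons level by level, since an element of $V_{\alpha+1}(\kappa,\lambda)$ can be a $<\kappa$-sized set of lower-level objects). So your route is shorter and perfectly valid within the paper's framework, but it relocates rather than avoids the induction; the paper's self-contained argument is what one would write if Abe's Lemma 1.3(1) were not taken as a black box.
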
\marginpar{\tiny Just state the more general version of this.}


\begin{proof}
We will prove by induction that for all $\alpha<\kappa$, if $X\subseteq V_\alpha(\kappa, A)$ and $|X|<\kappa$ then there is an $x\in P_\kappa A$ such that $X\subseteq V_{\kappa_x}(\kappa_x,x)$. Suppose $X\subseteq V_0(\kappa, A)= A$ and $|X|<\kappa$, then $X\in P_\kappa A$. By the inaccessibility of $\kappa$ we may let $x\in P_\kappa A$ be such that $X\subseteq x$ and $|X|<\kappa_x$. Then $X\in P_{\kappa_x}x\subseteq V_{\kappa_x}(\kappa_x,x)$. Suppose $X\subseteq V_\alpha(\kappa, A)$ for some limit $\alpha<\kappa$. Let $\<\beta(i)\st i<\cf(\alpha)\>$ be cofinal in $\alpha$. For each $i$, $X\cap V_{\beta(i)}(\kappa, A)$ is a subset of $V_{\beta(i)}(\kappa, A)$ of size $<\kappa$. Thus, by induction, for each $i$ there is some $x(i)\in P_\kappa A$ such that $X\cap V_{\beta(i)}(\kappa, A)\subseteq V_{\kappa_{x(i)}}(\kappa_{x(i)},x(i))$. Let $x=\bigcup_{i<\cf(\alpha)} x(i)$. Since $x(i)\subseteq x$, by Lemma \ref{lemma_subset}, we have $V_{\kappa_{x(i)}}(\kappa_{x(i)},x(i))\subseteq V_{\kappa_x}(\kappa_x,x)$ for each $i<\cf(\alpha)$. Thus 
\[X=\bigcup_{i<\cf(\alpha)}(X\cap V_{\beta(i)}(\kappa, A))\subseteq\bigcup_{i<\cf(\alpha)} V_{\kappa_{x(i)}}(\kappa_{x(i)},x(i))\subseteq V_{\kappa_x}(\kappa_x,x).\]
Now suppose $X\subseteq V_{\alpha+1}(\kappa, A)=P_\kappa(V_\alpha(\kappa, A))\cup V_\alpha(\kappa, A)$. Then we may write $X=Y\cup Z$ for some $Y\subseteq P_\kappa(V_\alpha(\kappa, A))$ and $Z\subseteq V_\alpha(\kappa, A)$ with $Y\cap Z=\emptyset$. Let $X'=(\bigcup Y)\cup Z$, then we have $|X'|<\kappa$ and $X'\subseteq V_\alpha(\kappa, A)$. By the inductive hypothesis there is some $y\in P_\kappa A$ such that $X'\subseteq V_{\kappa_y}(\kappa_y,y)$. Now choose $x\in P_\kappa A$ with $\kappa_y\sqsubset\kappa_x$. Then $X'\subseteq V_{\kappa_y}(\kappa_y,y)\subseteq V_{\kappa_y}(\kappa_x,x)$ implies $X\subseteq X'\cup P_\kappa X'\subseteq V_{\kappa_y+1}(\kappa_x,x)\subseteq P_{\kappa_x}(\kappa_x,x)$ and thus $X\subseteq V_{\kappa_x}(\kappa_x,x)$.
\end{proof}

Here we present, with proof, a slight modification of Lemma \ref{lemma_Abe_basic_properties_of_hierarchy}(5).

\begin{lemma}\label{lemma_weak_club_of_substructures}
Suppose $\kappa$ is inaccessible and $A$ is a set of ordinals with $\kappa\leq|A|$. If $R\subseteq V_\kappa(\kappa,A)$ then
\[C=\{x\in P_\kappa A\st (V_{\kappa_x}(\kappa_x,x),\in,R\cap V_{\kappa_x}(\kappa_x,x))\prec (V_\kappa(\kappa, A),\in, R)\}\]
is in $\NSS_{\kappa, A}^*$; in other words, there is a function $f:P_\kappa A\to P_\kappa A$ such that $C_f=_{\defn}\{x\in P_\kappa A\st f[P_{\kappa_x} x]\subseteq P_{\kappa_x} x\}\subseteq C$. \marginpar{\tiny Maybe Abe assumes $\kappa$ is Mahlo? See the discussion around the definition of WNS on page 263.}
\end{lemma}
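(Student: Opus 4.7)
The plan is to construct a single function $f\colon P_\kappa\lambda\to P_\kappa\lambda$ so that closure under $f$ simultaneously forces two things about any $x\in C_f$: first, that $\kappa_x$ is weakly inaccessible (by building in the hypothesis of Lemma \ref{lemma_make_closure_point_regular}), and second, that $V_{\kappa_x}(\kappa_x,x)$ is closed under a fixed family of Skolem functions for the structure $(V_\kappa(\kappa,\lambda),\in, R)$. The desired elementarity $(V_{\kappa_x}(\kappa_x,x),\in, R\cap V_{\kappa_x}(\kappa_x,x))\prec(V_\kappa(\kappa,\lambda),\in, R)$ will then follow by the Tarski--Vaught criterion.

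Fix an enumeration $\<\varphi_n(v_0,v_1,\dots, v_{k_n})\st n<\omega\>$ of all formulas in the language of $(V_\kappa(\kappa,\lambda),\in, R)$, and, using the axiom of choice, fix Skolem functions $h_n\colon V_\kappa(\kappa,\lambda)^{k_n}\to V_\kappa(\kappa,\lambda)$ so that whenever $(V_\kappa(\kappa,\lambda),\in, R)\models\exists v_0\,\varphi_n(v_0,\vec{a})$ we have $(V_\kappa(\kappa,\lambda),\in, R)\models\varphi_n(h_n(\vec{a}),\vec{a})$. For each $z\in P_\kappa\lambda$, define the one-step Skolem closure
\[\mathrm{Sk}(z)=V_{\kappa_z}(\kappa_z,z)\cup\bigcup_{n<\omega}\{h_n(\vec{a})\st \vec{a}\in V_{\kappa_z}(\kappa_z,z)^{k_n}\}.\]
Since $\kappa$ is inaccessible, $|V_{\kappa_z}(\kappa_z,z)|<\kappa$ and hence $|\mathrm{Sk}(z)|<\kappa$. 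By Lemma \ref{lemma_for_skolem}, pick $g(z)\in P_\kappa\lambda$ with $\mathrm{Sk}(z)\subseteq V_{\kappa_{g(z)}}(\kappa_{g(z)},g(z))$, and set
\[f(z)=g(z)\cup(\sup(z\cap\kappa))^+,\]
where $(\sup(z\cap\kappa))^+$ is identified with the set of its predecessors and lies in $P_\kappa\lambda$ because $\kappa$ is inaccessible.

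Now suppose $x\in C_f$, i.e.\ $f[P_{\kappa_x}x]\subseteq P_{\kappa_x}x$. Since $(\sup(z\cap\kappa))^+\subseteq f(z)\subseteq x$ for every $z\in P_{\kappa_x}x$, Lemma \ref{lemma_make_closure_point_regular} guarantees that $\kappa_x=x\cap\kappa$ is weakly inaccessible. To verify the Tarski--Vaught criterion, let $\vec{a}\in V_{\kappa_x}(\kappa_x,x)^{k_n}$. The entries of $\vec{a}$ form a finite and therefore $<\kappa_x$-sized subset of $V_{\kappa_x}(\kappa_x,x)$, so Lemma \ref{lemma_for_skolem_general} yields some $z\in P_{\kappa_x}x$ with $\vec{a}\in V_{\kappa_z}(\kappa_z,z)^{k_n}$. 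Hence $h_n(\vec{a})\in\mathrm{Sk}(z)\subseteq V_{\kappa_{g(z)}}(\kappa_{g(z)},g(z))$, and since $g(z)\subseteq f(z)\in P_{\kappa_x}x$, Lemma \ref{lemma_subset} gives $V_{\kappa_{g(z)}}(\kappa_{g(z)},g(z))\subseteq V_{\kappa_x}(\kappa_x,x)$, so $h_n(\vec{a})\in V_{\kappa_x}(\kappa_x,x)$. Hence $x\in C$, establishing $C_f\subseteq C$ and thus $C\in\NSS_{\kappa,\lambda}^*$.

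The one subtle point to navigate is the circular-looking interplay between needing $\kappa_x$ to be weakly inaccessible in order to invoke the approximation Lemma \ref{lemma_for_skolem_general}, and needing that very approximation in order to reduce an arbitrary parameter tuple from $V_{\kappa_x}(\kappa_x,x)$ to a small enough subapproximation on which a single Skolem witness lies in $V_{\kappa_{g(z)}}(\kappa_{g(z)},g(z))$. This is resolved cleanly by bundling the hypothesis of Lemma \ref{lemma_make_closure_point_regular} directly into $f$, which bootstraps weak inaccessibility of $\kappa_x$ at every closure point.
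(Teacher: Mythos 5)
Your construction of $f$ is essentially the paper's: dominate a Skolem-closure operation on $V_{\kappa_z}(\kappa_z,z)$ together with the map $z\mapsto(\sup(z\cap\kappa))^+$, so that closure under $f$ forces both weak inaccessibility of $\kappa_x$ and closure of $V_{\kappa_x}(\kappa_x,x)$ under witnesses. Where you genuinely diverge is in how elementarity is extracted. The paper imposes the extra requirements $z\subsetneq f(z)$ and $|f(z)|=f(z)\cap\kappa=\kappa_{f(z)}$, uses Lemma \ref{lemma_jech_clubs_are_weak_clubs} to get $|x|=x\cap\kappa=\kappa_x$ for $x\in C_f$, and then realizes $V_{\kappa_x}(\kappa_x,x)$ as the union of an elementary chain $\<M_i\st i<\kappa_x\>$ of Skolem hulls interleaved with an increasing enumeration of $x$. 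You instead fix a countable complete family of Skolem functions once and for all and verify the Tarski--Vaught criterion directly, using Lemma \ref{lemma_for_skolem_general} to capture an arbitrary finite parameter tuple inside some $V_{\kappa_z}(\kappa_z,z)$ with $z\in P_{\kappa_x}x$, whence the witness lands in $V_{\kappa_{g(z)}}(\kappa_{g(z)},g(z))\subseteq V_{\kappa_x}(\kappa_x,x)$ by Lemma \ref{lemma_subset}. This buys a shorter argument that dispenses with the elementary chain, with the continuity computation $\bigcup_{i<\kappa_x}M_i=V_{\kappa_x}(\kappa_x,x)$, and with the need to know $|x|=\kappa_x$; the cost is that you lean on Lemma \ref{lemma_for_skolem_general}, which the paper states without proof.

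One small point to make explicit: Lemma \ref{lemma_make_closure_point_regular} carries the hypothesis $x\cap\kappa\in\kappa$, which you invoke without checking. It does follow from closure under your $f$: the degenerate case $\kappa_x\leq 1$ is ruled out since $f(\emptyset)\supseteq(\sup\emptyset)^+\neq\emptyset$, and for any $\beta\in x\cap\kappa$ the singleton $\{\beta\}$ lies in $P_{\kappa_x}x$, so $\beta+1\subseteq(\sup(\{\beta\}\cap\kappa))^+\subseteq f(\{\beta\})\subseteq x$, making $x\cap\kappa$ an initial segment of $\kappa$. With that line added the argument is complete and correct.
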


\begin{proof}

Let $\lhd$ be a wellordering of $P_\kappa A$. Define $f:P_\kappa A\to P_\kappa A$ by letting $f(z)$ be the $\lhd$-least $y\in P_\kappa A$ such that 
\begin{enumerate}
\item $z\subsetneq y$,
\item $\sup(z\cap\kappa)^+\subseteq y$, 
\item $\Sk^{V_\kappa(\kappa, A)}(V_{\kappa_z}(\kappa_z,z),\in)\subseteq V_{\kappa_y}(\kappa_y,y)$ and
\item $|y|=y\cap\kappa=\kappa_y$. 
\end{enumerate}

That such a $y$ can be found follows from Lemma \ref{lemma_for_skolem} and the fact that $\{z\in P_\kappa A\st |z|=z\cap\kappa=\kappa_z\}$ is unbounded. Suppose $f[P_{\kappa_x}x]\subseteq P_{\kappa_x}x$. Since $x\cap\kappa\in\kappa$, it follows by Lemma \ref{lemma_make_closure_point_regular} that $\kappa_x=x\cap\kappa$ is a weakly inaccessible cardinal. 

Next we will argue that conditions (1) and (4) imply that $|x|=\kappa_x$. Notice that the set $D=_{\defn}\{z\in P_\kappa A\st |z|=z\cap\kappa=\kappa_z\}$ is a Jech club, meaning that $D\in\NS_{\kappa, A}^*$. Since $z\subsetneq f(z)\in D$ for every $z\in P_\kappa A$ we may apply Lemma \ref{lemma_jech_clubs_are_weak_clubs} to see that $C_f\subseteq D$ and hence $x\in D$, which implies $|x|=x\cap\kappa=\kappa_x$.

Since $|x|=x\cap\kappa=\kappa_x$ is a weakly inaccessible cardinal, we may fix a bijection $b$ from the set of successor ordinals less than $\kappa_x$ to $x$. We recursively define a $\subsetneq$-increasing sequence $\<x(i)\st i<\kappa_x\>$ in $P_{\kappa_x}x$ and an elementary chain $\<M_i\st i<\kappa_x\>$ of substructures of $(V_\kappa(\kappa, A),\in)$ as follows. Choose $x(0)\in P_{\kappa_x}x$ and let $M_0=\Sk^{V_\kappa(\kappa, A)}(V_{\kappa_{x(0)}}(\kappa_{x(0)},x(0)),\in)$. By (3) and the fact that $f[P_{\kappa_x}x]\subseteq P_{\kappa_x}x$, it follows that $M_0\subseteq V_{\kappa_y}(\kappa_y,y)$ for some $y\in P_{\kappa_x}x$ with $x(0)\subseteq y$. Given $x(i)$ and $M_i$, let $x(i+1)$ be the $\lhd$-least element of $P_{\kappa_x}x$ such that $x(i)\cup\{b(i+1)\}\subseteq x(i+1)$ and $M_i\subseteq V_{\kappa_{x(i+1)}}(\kappa_{x(i+1)},x(i+1))$. Define $M_{i+1}=\Sk^{V_\kappa(\kappa, A)}(V_{\kappa_{x(i+1)}}(\kappa_{x(i+1)},x(i+1)),\in)$. If $i<\kappa_x$ is a limit let $x(i)=\bigcup_{j<i}x(j)$ and $M_i=\bigcup_{j<i}M_j$. It follows by induction, applying the elementary chain lemma at limit stages, that for every $j<\kappa_x$ we have $i<j$ implies $M_i\prec M_j$. Thus $\<M_i\st i<\kappa_x\>$ is indeed an elementary chain of substructures of $(V_\kappa(\kappa, A),\in)$. Since $b(i+1)\in x(i+1)$ for all $i<\kappa_x$, and $b$ is a bijection from the successor ordinals less than $\kappa_x$ to $x$, it follows that 

\begin{align}
x=\bigcup_{i<\kappa_x}x(i). \label{equation_continuity}
\end{align}
Thus $\kappa_x=\bigcup_{i<\kappa_x}\kappa_{x(i)}$. Furthermore we have

\begin{align*}
\bigcup_{i<\kappa_x}M_i&=\bigcup_{i<\kappa_x}V_{\kappa_{x(i)}}(\kappa_{x(i)},x(i))\tag{by construction}\\
	&=\bigcup_{y\prec x}V_{\kappa_y}(\kappa_y,y)\tag{use  (\ref{equation_continuity}) and $\kappa_x$ inaccessible}\\
	&=V_{\kappa_x}(\kappa_x,x) \tag{Lemma \ref{lemma_Abe_basic_properties_of_hierarchy}}
\end{align*}
\noindent Since $\<M_i\st i<\kappa_x\>$ is an elementary chain, each $M_i$ is an elementary substructure of $(V_{\kappa_x}(\kappa_x,x),\in)$ and hence $(V_{\kappa_x}(\kappa_x,x),\in)\prec V_{\kappa}(\kappa, A)$ by the Tarski-Vaught test.
 


\end{proof}

\begin{lemma}\label{lemma_transitive_part}
If $\delta$ is inaccessible and $A$ is a set of ordinals with $\delta\leq |A|$, then $V_\delta(\delta,A)\cap V_\delta=V_\delta$. In particular, if $x\in P_\kappa\lambda$ and $\kappa_x$ is inaccessible then $V_{\kappa_x}(\kappa_x,x)\cap V_{\kappa_x}=V_{\kappa_x}$.
\end{lemma}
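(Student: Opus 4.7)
The equality $V_\delta(\delta,A)\cap V_\delta = V_\delta$ breaks into two inclusions: $V_\delta(\delta,A)\cap V_\delta \subseteq V_\delta$ is immediate from the definition of intersection, so the entire content of the lemma is the inclusion $V_\delta \subseteq V_\delta(\delta,A)$. My plan is to prove this by transfinite induction on $\alpha \leq \delta$, showing more strongly that $V_\alpha \subseteq V_\alpha(\delta,A)$ for every $\alpha \leq \delta$.

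The base case $\alpha = 0$ is trivial since $V_0 = \emptyset \subseteq A = V_0(\delta,A)$. For the successor step, suppose $\alpha+1 \leq \delta$ and $V_\alpha \subseteq V_\alpha(\delta,A)$. If $x \in V_{\alpha+1} = P(V_\alpha)$, then $x \subseteq V_\alpha \subseteq V_\alpha(\delta,A)$ by the inductive hypothesis. This is the only place where inaccessibility is genuinely used: since $\delta$ is a strong limit and $\alpha < \delta$, we have $|V_\alpha| < \delta$, hence $|x| \leq |V_\alpha| < \delta$, and therefore $x \in P_\delta(V_\alpha(\delta,A)) \subseteq V_{\alpha+1}(\delta,A)$. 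The limit case is immediate from the definitions of both hierarchies, since each is obtained by taking unions over stages below.

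Taking the union at $\alpha = \delta$ gives $V_\delta \subseteq V_\delta(\delta,A)$, which completes the proof of the first assertion. For the ``in particular'' clause, one simply specializes to $\delta = \kappa_x$ and $A = x$; since $x\in P_\kappa\lambda$ is a subset of the ordinal $\lambda$, it is a set of ordinals, and the hypothesis that $\kappa_x$ is inaccessible is exactly what the first assertion requires. The only mild subtlety worth emphasizing in the writeup is the role of strong inaccessibility in the successor step, as without it the cardinality bound $|V_\alpha| < \delta$ could fail and subsets of $V_\alpha$ would not automatically lie in $P_\delta(V_\alpha(\delta,A))$. No genuine obstacle arises beyond that.
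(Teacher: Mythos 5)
Your proof is correct and follows the same route the paper indicates: the paper's proof is a one-line remark that it suffices to show $V_\alpha(\delta,A)\cap V_\alpha=V_\alpha$ for all $\alpha<\delta$ by an easy induction, which is exactly the induction you carry out in detail. Your identification of where inaccessibility is used (the bound $|V_\alpha|<\delta$ in the successor step, so that subsets of $V_\alpha$ land in $P_\delta(V_\alpha(\delta,A))$) is the right observation and fills in the detail the paper leaves implicit.
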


\begin{proof}
It suffices to show that for every $\alpha<\delta$ we have $V_\alpha(\delta,A)\cap V_\alpha=V_\alpha$, which can be done using an easy induction argument.
\end{proof}

\begin{lemma}\label{lemma_restrict_P_kappa_lambda}
For $x\in P_\kappa\lambda$ with $1\leq x\cap \kappa=\kappa_x\in \kappa$ we have $P_\kappa\lambda\cap V_{\kappa_x}(\kappa_x,x)=P_{\kappa_x}x$.
\end{lemma}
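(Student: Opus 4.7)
The plan is to prove the set equality by showing both containments; the forward containment is essentially definitional, while the reverse goes by analyzing the stage at which $y$ enters the two-cardinal hierarchy.

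For $P_{\kappa_x}x\subseteq P_\kappa\lambda\cap V_{\kappa_x}(\kappa_x,x)$: if $y\subseteq x$ with $|y|<\kappa_x$, then $y\subseteq\lambda$ and $|y|<\kappa_x<\kappa$, so $y\in P_\kappa\lambda$; moreover $y\in P_{\kappa_x}(V_0(\kappa_x,x))\subseteq V_1(\kappa_x,x)\subseteq V_{\kappa_x}(\kappa_x,x)$.

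For the reverse containment, the key observation is that $x\cap\kappa=\kappa_x$ is an equality of sets, which simultaneously gives $\kappa_x\subseteq x$ and tells us that $x$ contains no ordinals in the interval $[\kappa_x,\kappa)$. A preliminary induction then shows that every ordinal in $V_{\kappa_x}(\kappa_x,x)$ belongs to $x$: either it lies in $V_0=x$ already, or it first appears at some successor stage as a subset of $V_\gamma$ of size less than $\kappa_x$, in which case, because $\kappa_x$ is a cardinal, the ordinal is itself less than $\kappa_x$ and therefore lies in $\kappa_x\subseteq x$. Now given $y\in V_{\kappa_x}(\kappa_x,x)\cap P_\kappa\lambda$, pick $\alpha<\kappa_x$ minimal with $y\in V_\alpha(\kappa_x,x)$; minimality forces $\alpha=0$ or $\alpha=\gamma+1$. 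If $\alpha=0$ then $y\in x$ is an ordinal with $|y|<\kappa$, which by the preliminary observation on $x\cap\kappa$ forces $y<\kappa_x$, whence $y\subseteq\kappa_x\subseteq x$ and $|y|=y<\kappa_x$. If $\alpha=\gamma+1$ then $y\in P_{\kappa_x}(V_\gamma)$ already gives $|y|<\kappa_x$, and applying the preliminary observation to each ordinal element of $y$ (which lies in $V_\gamma$ and is below $\lambda$) yields $y\subseteq x$. In either case $y\in P_{\kappa_x}x$.

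The only subtle point is the repeated reliance on the \emph{set-theoretic} equality $x\cap\kappa=\kappa_x$, as opposed to the mere cardinality statement $|x\cap\kappa|=\kappa_x$: this is precisely what both places $\kappa_x$ inside $x$ and excludes ordinals in $x\cap[\kappa_x,\kappa)$ which would otherwise furnish elements of $P_\kappa\lambda\setminus P_{\kappa_x}x$ inside $V_0$. No further structural hypotheses (inaccessibility, Mahloness, etc.) are needed for the argument.
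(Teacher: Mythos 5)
Your proof is correct and follows essentially the same route as the paper's: both containments are handled by induction on the stages of the hierarchy $V_\alpha(\kappa_x,x)$, with the forward inclusion read off from $V_1(\kappa_x,x)=P_{\kappa_x}x\cup x$. The one difference is that you isolate and prove the auxiliary fact that every ordinal of $V_{\kappa_x}(\kappa_x,x)$ lies in $x$ (new ordinals appearing at successor stages have size ${<}\kappa_x$, hence are below the cardinal $\kappa_x$ and so lie in $\kappa_x=x\cap\kappa\subseteq x$), which is exactly what is needed to justify the successor-step containment $P_\kappa\lambda\cap P_{\kappa_x}(V_\alpha(\kappa_x,x))\subseteq P_{\kappa_x}x$ that the paper's proof asserts without comment; this makes your write-up somewhat more complete than the original.
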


\begin{proof}
Notice that $V_1(\kappa_x,x)=(P_{\kappa_x}x)\cup x\subseteq V_{\kappa_x}(\kappa_x,x)$. Hence $P_{\kappa_x}x\subseteq P_{\kappa}\lambda\cap V_{\kappa_x}(\kappa_x,x)$. For the converse, it suffices to prove by induction that for every $\alpha<\kappa_x$ we have $P_\kappa\lambda\cap V_{\alpha}(\kappa_x,x)\subseteq P_{\kappa_x}x$. For $\alpha=0$ notice that $P_\kappa\lambda\cap V_0(\kappa_x,x)=(P_{\kappa}\lambda)\cap x= (x\cap \kappa)=\kappa_x\subseteq P_{\kappa_x}x$. Assuming that $P_\kappa\lambda\cap V_\alpha(\kappa_x,x)\subseteq P_{\kappa_x}x$, let us consider $P_\kappa\lambda\cap V_{\alpha+1}(\kappa_x,x)$. Since $P_\kappa\lambda\cap P_{\kappa_x}(V_\alpha(\kappa_x,x))\subseteq P_{\kappa_x}x$ it follows that $P_\kappa\lambda\cap V_{\alpha+1}(\kappa_x,x)=P_\kappa\lambda\cap P_{\kappa_x}(V_\alpha(\kappa_x,x))\subseteq P_{\kappa_x}x$. The limit case is trivial.
\end{proof}

As mentioned in Section \ref{section_introduction}, the next theorem suggests that one should use strong stationarity instead of stationarity when generalizing the notion of $1$-club subset of $\kappa$ to that of $P_\kappa\lambda$.

\begin{theorem}\label{lemma_Pi_1_0_ideal}
If $\kappa$ is Mahlo and $A$ is a set of ordinals with $\kappa\leq |A|$, then $S\subseteq P_\kappa A$ is in $\NSS_{\kappa, A}^+$ if and only if $S$ is $\Pi^1_0$-indescribable in $P_\kappa A$ (i.e. first-order indescribable); in other words,
\[\Pi^1_0(\kappa, A)=\NSS_{\kappa, A}.\]\marginpar{\tiny Actually we need $\Pi^1_0(\kappa_x,x)=\NSS_{\kappa_x,x}$!}
\end{theorem}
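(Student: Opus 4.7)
My plan is to establish the two inclusions of $\Pi^1_0(\kappa,\lambda) = \NSS_{\kappa,\lambda}$ separately, leveraging Lemma \ref{lemma_weak_club_of_substructures} for one direction and Lemma \ref{lemma_restrict_P_kappa_lambda} for the other.

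For $\NSS_{\kappa,\lambda}^+ \subseteq \Pi^1_0(\kappa,\lambda)^+$, that is, strongly stationary implies $\Pi^1_0$-indescribable, I will suppose $S \in \NSS_{\kappa,\lambda}^+$ and fix a predicate $R \subseteq V_\kappa(\kappa,\lambda)$ together with a first-order sentence $\varphi$ such that $(V_\kappa(\kappa,\lambda), \in, R) \models \varphi$. Lemma \ref{lemma_weak_club_of_substructures} produces an $f:P_\kappa\lambda \to P_\kappa\lambda$ for which $C_f$ is contained in $\{x : (V_{\kappa_x}(\kappa_x,x), \in, R \cap V_{\kappa_x}(\kappa_x,x)) \prec (V_\kappa(\kappa,\lambda), \in, R)\}$; inspecting the proof of that lemma, the same $f$ can be arranged to force $|x| = x \cap \kappa = \kappa_x$ for each $x \in C_f$. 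Strong stationarity of $S$ then furnishes some $x \in S \cap C_f$, and elementarity transfers $\varphi$ down into the smaller structure.

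For the reverse inclusion I will argue the contrapositive: assuming $S \in \NSS_{\kappa,\lambda}$, I produce $R, \varphi$ witnessing that $S$ is not $\Pi^1_0$-indescribable. Fixing $f:P_\kappa\lambda \to P_\kappa\lambda$ with $S \cap C_f = \emptyset$, I will use two predicates (permissible by the remark following Definition \ref{definition_indescribable}): set $R_1 = P_\kappa\lambda \subseteq V_1(\kappa,\lambda)$ and $R_2 = \{(a, f(a)) : a \in P_\kappa\lambda\} \subseteq V_3(\kappa,\lambda)$, where pairs are Kuratowski-coded. The sentence
$$\varphi \equiv \bigl(\forall z\, (R_1(z) \to \exists w\, (z,w) \in R_2)\bigr) \wedge \exists z\, R_1(z)$$
is first-order and plainly holds in $(V_\kappa(\kappa,\lambda), \in, R_1, R_2)$. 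For any $x \in S$ with $x \cap \kappa = \kappa_x$, I need to check that $\varphi$ fails in the reflected structure: when $\kappa_x = 0$, $V_{\kappa_x}(\kappa_x,x) = x$ consists of ordinals $\geq \kappa$, so $R_1$ restricts to $\emptyset$ and the existential conjunct fails; when $\kappa_x \geq 1$, Lemma \ref{lemma_restrict_P_kappa_lambda} yields $R_1 \cap V_{\kappa_x}(\kappa_x,x) = P_{\kappa_x}x$, and since $x \notin C_f$ I can pick $a \in P_{\kappa_x}x$ with $f(a) \notin P_{\kappa_x}x = P_\kappa\lambda \cap V_{\kappa_x}(\kappa_x,x)$, forcing the pair $(a, f(a))$ out of $V_{\kappa_x}(\kappa_x,x)$ and hence out of $R_2 \cap V_{\kappa_x}(\kappa_x,x)$, so $a$ has no $R_2$-image.

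The crux will be Lemma \ref{lemma_restrict_P_kappa_lambda}, which equates $P_\kappa\lambda \cap V_{\kappa_x}(\kappa_x,x)$ with $P_{\kappa_x}x$ and thus lets the sentence $\varphi$ pinpoint exactly the failure of $x \in C_f$. The one technicality I foresee is that, for very small (finite) $\kappa_x$, Kuratowski pairs might not fit inside $V_{\kappa_x}(\kappa_x,x)$ to begin with; but this can only make $\varphi$ fail more readily in such reflected structures, so the argument remains intact.
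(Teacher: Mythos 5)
Your proof is correct and follows essentially the same route as the paper: the forward direction is verbatim the paper's appeal to Lemma \ref{lemma_weak_club_of_substructures}, and your reverse direction is just the contrapositive of the paper's argument, reflecting the first-order statement "every element of $P_\kappa\lambda$ has an image under $f$" via Lemma \ref{lemma_restrict_P_kappa_lambda}. Your extra conjunct $\exists z\, R_1(z)$ and the attention to the $\kappa_x=0$ and pair-coding edge cases tidy up details the paper glosses over, but the underlying argument is the same.
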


\begin{proof}
Suppose $S$ is in $\NSS_{\kappa, A}^+$, $R\subseteq V_\kappa(\kappa, A)$ and let $\varphi$ be a first order sentence with $(V_\kappa(\kappa, A),\in,R)\models\varphi$. By Lemma \ref{lemma_weak_club_of_substructures}, there is a weak club $C_f$ such that $x\in C_f$ implies $(V_{\kappa_x}(\kappa_x,x),\in, R\cap V_{\kappa_x}(\kappa_x,x))\elemsub (V_\kappa(\kappa, A),\in,R)$. If we choose $x\in C_f\cap S\neq\emptyset$ then $(V_{\kappa_x}(\kappa_x,x),\in, R\cap V_{\kappa_x}(\kappa_x,x))\models\varphi$.

Conversely, suppose $S$ is $\Pi^1_0$-indescribable, i.e. $S\in \Pi^1_0(\kappa, A)^+$, and let $C_f\subseteq P_\kappa A$ be in $\NSS_{\kappa, A}^*$ where $f:P_\kappa A\to P_\kappa A$. Since $V_1(\kappa, A)=(P_\kappa A)\cup A$, it follows that $f\subseteq V_3(\kappa, A)\subseteq V_{\kappa}(\kappa, A)$. We have 
\begin{align}
(V_\kappa(\kappa, A),\in,f,P_\kappa A)\models (\forall y\in P_\kappa A)(\exists z\in P_\kappa A) (f(y)=z).\label{label_formula}
\end{align}
Since $S$ is $\Pi^1_0$-indescribable we may fix an $x\in S$ with $x\cap\kappa=\kappa_x$ to which the formula in (\ref{label_formula}) reflects. Since $x\cap \kappa=\kappa_x$, we may apply Lemma \ref{lemma_restrict_P_kappa_lambda} to obtain $P_\kappa A\cap V_{\kappa_x}(\kappa_x,x)=P_{\kappa_x}x$. Since $f\cap V_{\kappa_x}(\kappa_x,x)$, it follows that
\[(V_{\kappa_x}(\kappa_x,x),\in,f\restrict P_{\kappa_x}x,P_{\kappa_x}x)\models (\forall y\in P_{\kappa_x}x)(\exists z\in P_{\kappa_x}x) ((f\restrict P_{\kappa_x}x)(y)=z).\] 
Therefore $x\in S\cap C_f$.
\end{proof}
\marginpar{\tiny Need to check that $P_\kappa\lambda\cap V_{\kappa_x}(\kappa_x,x)=P_{\kappa_x}x$ and that $f\cap V_{\kappa_x}(\kappa_x,x)$ makes sense by using some coding mechanism, unless $f$ is literally a subset of $V_\kappa(\kappa,\lambda)$ using the definition of ordered pairs.}

\section{Indescribability, $n$-clubs and weak compactness}\label{section_n_clubs}

First, we show that under reasonable assumptions, a set $S\subseteq P_\kappa\lambda$ is $\Pi^1_n$-indescribable if and only if $S\cap C\neq\emptyset$ for all $n$-clubs $C\subseteq P_\kappa\lambda$. Let us define the notion of $n$-club subset of $P_\kappa\lambda$. Recall that Definition \ref{definition_1_club} states $C\subseteq P_\kappa\lambda$ is \emph{$1$-club} if and only if 

\begin{enumerate}
\item $C\in \NSS_{\kappa,\lambda}^+$ and
\item $C$ is \emph{$1$-closed}, that is, for every $x\in P_\kappa\lambda$, if $\kappa_x$ is an inaccessible\footnote{Notice that we could replace ``inaccessible'' with ``Mahlo'' here to obtain an equivalent definition because $\NSS_{\kappa_x,x}^+\neq\emptyset$ and $\kappa_x$ inaccessible implies $\kappa_x$ is Mahlo, by Theorem \ref{cpl_minimal_strongly_normal_ideal}.} cardinal and $C\cap P_{\kappa_x}x\in\NSS_{\kappa_x,x}^+$ then $x\in C$.
\end{enumerate}
We generalize Hellsten's \cite[Section 2.4]{MR2026390} notion of $n$-club subset of a cardinal to the two-cardinal context as follows.

\begin{definition}
Suppose $\kappa$ is an inaccessible cardinal and $A$ is a set of ordinals with $\kappa\leq |A|$. A set $C\subseteq P_\kappa A$ is \emph{$0$-club} in $P_\kappa A$ if and only if it is a weak club in $P_\kappa A$. For $n<\omega$, we say that $C\subseteq P_\kappa A$ is \emph{$(n+1)$-club in $P_\kappa A$} if and only if $C\in \Pi^1_n(\kappa, A)^+$ and whenever $x\in P_\kappa A$ is such that $C\cap P_{\kappa_x}x\in \Pi^1_n(\kappa_x,x)^+$ and $\kappa_x$ is inaccessible, then we have $x\in C$.
\end{definition}

\begin{lemma}\label{lemma_weak_clubs_contain_1_clubs}
Suppose $\kappa\leq \lambda$ are cardinals where $\kappa$ is Mahlo and $n<\omega$. If $P_\kappa\lambda$ is $\Pi^1_n$-indescribable so that $\Pi^1_n(\kappa,\lambda)$ is a proper ideal, then every $n$-club subset of $P_\kappa\lambda$ is $(n+1)$-club.\end{lemma}

\begin{proof}
For $n=0$ we must show that if $C\subseteq P_\kappa\lambda$ is a weak club, then it is a $1$-club. Suppose $f:P_\kappa\lambda\to P_\kappa\lambda$ is a function with $C=C_f=\{x\in P_\kappa\lambda\st f[P_{\kappa_x}x]\subseteq P_{\kappa_x}x\}$. Clearly $C_f\in \NSS_{\kappa,\lambda}^+$. Suppose $x\in P_\kappa\lambda$ is such that $C_f\cap P_{\kappa_x}x\in \NSS_{\kappa_x,x}^+$ and $\kappa_x$ is inaccessible. Suppose $x\notin C_f$. Then there is some $y\in P_{\kappa_x}x$ such that $f(y)\notin P_{\kappa_x}x$. Since $\hat{y}=_{\defn}\{z\in P_{\kappa_x}x\st y\in P_{\kappa_z}z\}\in \NS_{\kappa_x,x}^*$, it follows from Lemma \ref{lemma_jech_clubs_are_weak_clubs}, that there is a $g:P_{\kappa_x}x\to P_{\kappa_x}x$ such that $C_g\subseteq\hat{y}$. Since $C_f\cap P_{\kappa_x}x$ has nontrivial intersection with every weak club subset of $P_{\kappa_x}x$, we conclude that there is some $w\in (C_f\cap P_{\kappa_x}x)\cap C_g$. Since $w\in C_g\subseteq\hat{y}$ we have $y\in P_{\kappa_w}w$, but since $f(y)\notin P_{\kappa_x}x$ we also have $f(y)\notin P_{\kappa_w}w$ and thus $w\notin C_f$.

Suppose $n>0$ and $C\subseteq P_\kappa\lambda$ is an $n$-club. Since $\Pi^1_n(\kappa,\lambda)$ is a proper ideal, we see that $C\in \Pi^1_n(\kappa,\lambda)^+$. To show that $C$ is an $(n+1)$-club, suppose $x\in P_\kappa\lambda$ is such that $C\cap P_{\kappa_x}x\in \Pi^1_n(\kappa_x,x)^+$ and $\kappa_x$ is inaccessible. Then $C\cap P_{\kappa_x}x\in \Pi^1_{n-1}(\kappa_x,x)^+$, and hence $x\in C$ since $C$ is $n$-club.
\end{proof}

Let us consider the following generalization of a standard fact (see \cite[Corollary 6.9]{Kanamori:Book}).

\begin{lemma}\label{lemma_sentence_ind}
For every $n<\omega$ there is a $\Pi^1_{n+1}$ sentence $\varphi_n$ such that for any inaccessible cardinal $\kappa$ and any set of ordinals $A$ with $\kappa\leq|A|$ we have $S$ is $\Pi^1_n$-indescribable in $P_\kappa A$ if and only if $(V_\kappa(\kappa,A),\in,S,P_\kappa A)\models\varphi_n$.
\end{lemma}

\begin{proof}
For $n=0$, since the $\Pi^1_0$-indescribability of $S$ is equivalent to its strong stationarity (by Theorem \ref{lemma_Pi_1_0_ideal}), we let $\varphi_0$ be the natural $\Pi^1_1$ description of the strong stationarity of $S$; that is, $\varphi_0$ is the $\Pi^1_1$ statement ``for all functions $f:P_\kappa A\to P_\kappa A$ there is an $x\in S$ such that $f[P_{\kappa_x}x]\subseteq P_{\kappa_x}x$''.

Suppose $n>0$. As noted in \cite[Section 4]{MR1635559}, there is a \emph{universal} $\Pi^1_n$ formula $\psi_{1,n}(X,Y)$ where $X$ a second-order variable and $Y$ a first-order variable, in the sense that for any $\Pi^1_n$ formula $\varphi(X)$ there is a $k<\omega$ such that whenever $\delta$ is an inaccessible cardinal, $A$ is a set of ordinals with $|A|\geq\kappa$ and $R\subseteq V_\delta(\delta,A)$ we have
\[(V_\delta(\delta,A),\in)\models\varphi[R]\text{ if and only if } (V_\delta(\delta,A),\in)\models\psi_{1,n}[R,k].\]
Since $\psi_{1,n}$ is $\Pi^1_n$, it follows that the statement
\[\forall X\forall Y(\psi_{1,n}[X,Y]\rightarrow\exists x\in S ((V_{\kappa_x}(\kappa_x,x),\in)\models\psi_{1,n}[X\cap V_{\kappa_x}(\kappa_x,x), Y])),\]
which we denote by $\varphi_n$, is $\Pi^1_{n+1}$. It is straightforward to see that $\varphi_n$ satisfies the conclusion of the lemma.
\end{proof}

Generalizing \cite[Theorem 2.4.2]{MR2026390} we obtain the following result which was mentioned in Section \ref{section_introduction}.

\begin{theorem12}
Suppose $\kappa\leq\lambda$ are cardinals with $\lambda^{<\kappa}=\lambda$, $n<\omega$ and $P_\kappa\lambda$ is $\Pi^1_n$-indescribable. Then $S\subseteq P_\kappa\lambda$ is $\Pi^1_n$-indescribable if and only if for all $n$-clubs $C\subseteq P_\kappa\lambda$ we have $S\cap C\neq\emptyset$.
\end{theorem12}

\begin{proof}
The cases $n=0$ and $n=1$ follow directly from Theorem \ref{lemma_Pi_1_0_ideal}. 

Suppose $1\leq n<\omega$. For the forward direction, suppose $S\subseteq P_\kappa\lambda$ is $\Pi^1_n$-indescribable and suppose $C\subseteq P_\kappa\lambda$ is an $n$-club. Since $C\in \Pi^1_{n-1}(\kappa,\lambda)^+$, it follows that $(V_\kappa(\kappa,\lambda),\in,C,P_\kappa\lambda)\models\varphi_{n-1}\land \sigma$ where $\varphi_{n-1}$ is the $\Pi^1_n$ sentence from Lemma \ref{lemma_sentence_ind} and $\sigma$ is the $\Pi^1_1$ sentence from Lemma \ref{lemma_sentence} asserting that $\kappa$ is inaccessible. Since $S\in \Pi^1_n(\kappa,\lambda)^+$ there is an $x\in S$ such that $x\cap \kappa=\kappa_x$ and
\[(V_{\kappa_x}(\kappa_x,x),\in,C\cap P_{\kappa_x}x,P_{\kappa_x}x)\models\varphi_{n-1}\land\sigma.\]
This implies $C\cap P_{\kappa_x} x$ is in $\Pi^1_{n-1}(\kappa_x,x)^+$ and $\kappa_x$ is inaccessible. Thus $x\in C$ since $C$ is $n$-club.

Conversely, suppose $S$ intersects every $n$-club subset of $P_\kappa\lambda$. Let $R\subseteq V_\kappa(\kappa,\lambda)$ and let $\varphi=\forall X\psi(X)$ be a $\Pi^1_n$ sentence where $\psi(X)$ is a $\Sigma^1_{n-1}$ formula such that $(V_\kappa(\kappa,\lambda),\in,R)\models \varphi$. It suffices to show that 
\[D=\{x\in P_\kappa\lambda\st (V_{\kappa_x}(\kappa_x,x),\in,R\cap V_{\kappa_x}(\kappa_x,x))\models\varphi\}\]
is $n$-club. 

First let us show that $D\in\Pi^1_{n-1}(\kappa,\lambda)^+$. Suppose not, then
\[E=P_\kappa\lambda\setminus D=\{x\in P_\kappa\lambda\st(V_{\kappa_x}(\kappa_x,x),\in,R\cap V_{\kappa_x}(\kappa_x,x))\models\exists X\lnot\psi(X)\}\]
is in $\Pi^1_{n-1}(\kappa,\lambda)^*$. By our inductive hypothesis, this implies that $E$ contains an $(n-1)$-club subset of $P_\kappa\lambda$, and since $(n-1)$-clubs are $n$-clubs we see that $E$ contains an $n$-club. Since $\Pi^1_n(\kappa,\lambda)$ is a proper ideal, $E$ is therefore $\Pi^1_n$-indescribable. Thus, from our assumption that $(V_\kappa(\kappa,\lambda),\in,R)\models\forall X\psi(X)$, we may conclude that there is an $x\in E$ such that $(V_{\kappa_x}(\kappa_x,x),\in,R\cap V_{\kappa_x}(\kappa_x,x))\models\forall X\psi(X)$, a contradiction.

Next, we show that for every $x\in P_\kappa\lambda$, if $D\cap P_{\kappa_x}x\in\Pi^1_{n-1}(\kappa_x,x)^+$ then $x\in D$. Suppose $D\cap P_{\kappa_x}x\in\Pi^1_{n-1}(\kappa_x,x)^+$ but $x\notin D$. Then $(V_{\kappa_x}(\kappa_x,x),\in,R\cap V_{\kappa_x}(\kappa_x,x))\models\lnot\psi[A]$ for some $A\subseteq V_{\kappa_x}(\kappa_x,x)$. Since $\lnot\psi[A]$ is a $\Pi^1_{n-1}$ sentence, it follows that there is some $y\in D\cap P_{\kappa_x}x$ such that
\[(V_{\kappa_y}(\kappa_y,y),\in,R\cap V_{\kappa_y}(\kappa_y,y),A\cap V_{\kappa_y}(\kappa_y,y))\models\lnot\psi[A],\]
a contradiction. 
\end{proof}

From Theorem \ref{theorem_n_club} we can easily show that $n$-clubs are measure one with respect to any supercompactness ultrafilter.

\begin{corollary}\label{corollary_1_clubs_are_normal_measure_one}
Suppose $\kappa\leq\lambda$ are cardinals with $\lambda^{<\kappa}=\lambda$ and $\kappa$ is $\lambda$-supercompact. If $U$ is a normal fine $\kappa$-complete nonprincipal ultrafilter on $P_\kappa\lambda$ then for all $n<\omega$ if $C\subseteq P_\kappa\lambda$ is $n$-club then $C\in U$.
\end{corollary}

\begin{proof}
Let $j:V\to N$ be the ultrapower by $U$. So the critical point of $j$ is $\kappa$, $j(\kappa)>\lambda$ and $j"\lambda\in N$. It is easy to see that every $0$-club is in $U$.


For $n>0$, suppose $C$ is $n$-club in $P_\kappa\lambda$. Then by elementarity, in $N$, $j(C)$ is $n$-club in $j(P_\kappa\lambda)$. It will suffice to show that the set $j(C)\cap P_{j(\kappa)_{j"\lambda}}j"\lambda=j(C)\cap P_\kappa j"\lambda$ is a $\Pi^1_{n-1}$-indescribable subset of $P_\kappa j"\lambda$ in $N$, because this implies $j"\lambda\in j(C)$. By Theorem \ref{theorem_n_club}, it will suffice to show that, in $N$, $j(C)\cap P_\kappa j"\lambda$ intersects every $(n-1)$-club subset of $P_\kappa j"\lambda$. In $N$, fix an $(n-1)$-club $D\subseteq P_\kappa j"\lambda$. Since $j\restrict P_\kappa\lambda\to P_\kappa j"\lambda$ is a bijection and $j$ is a supercompactness ultrapower, it follows that $j^{-1}[D]$ is an $(n-1)$-club subset of $P_\kappa\lambda$. Since $C$ is an $n$-club subset of $P_\kappa\lambda$ in $V$, it follows that $C$ is $\Pi^1_{n-1}$-indescribable, and thus there is some $x\in C\cap j^{-1}[D]$. Then we have $j(x)=j"x\in j(C)\cap D\cap P_\kappa j"\lambda$. Thus, in $N$, $j(C)\cap P_\kappa j"\lambda$ is $\Pi^1_{n-1}$-indescribable and so $j"\lambda\in j(C)$.
\end{proof}

\begin{corollary}\label{corollary_normal_measure_one_sets_are_weakly_compact}
Suppose that $\kappa$ is $\lambda$-supercompact where $\kappa\leq\lambda$ and $\lambda^{<\kappa}=\lambda$. If $U$ is a normal fine $\kappa$-complete nonprincipal ultrafilter on $P_\kappa\lambda$ then for all $n<\omega$ we have
\[\Pi^1_n(\kappa,\lambda)^*\subseteq U\subseteq \Pi^1_n(\kappa,\lambda)^+.\]
\end{corollary}

Next, let us establish Theorem \ref{theorem_characterization_embedding}, which we restate here for the reader's convenience.

\begin{theorem14}
Suppose $\kappa\leq\lambda$ are cardinals such that $\kappa$ is inaccessible and $\lambda^{<\kappa}=\lambda$. Then a set $W\subseteq P_\kappa\lambda$ is $\Pi^1_1$-indescribable if and only if it is weakly compact.
\end{theorem14}

To do this we will use the filter characterization of weakly compact subsets of $P_\kappa\lambda$ due to Schanker \cite{MR2989393} given above in Lemma \ref{lemma_basic_embedding_characterizations}(3), which strongly resembles a filter characterization of $\Pi^1_1$-indescribable sets due to Carr \cite{MR808767}.

\begin{definition}[\cite{MR808767}]
The \emph{normal ultrafilter property for $X\in \NS_{\kappa,\lambda}^+$}, written $\NUP_{\kappa,\lambda,X}$ states that for any $\kappa$-complete field $B$ of subsets of $P_\kappa\lambda$ such that $|B|=\lambda$, $X\in B$ and $(\forall\alpha<\lambda)(\widetilde{\{\alpha\}}=_{\defn}\{x\in P_\kappa\lambda\st\alpha\in x\}\in B)$, and for any collection $G=\{g_\alpha\st\alpha<\lambda\}$ of regressive functions on $P_\kappa\lambda$ such that $(\forall\alpha<\lambda)(\forall\beta<\lambda)(g_\alpha^{-1}(\{\beta\})\in B)$, there is a $\kappa$-complete ultrafilter $U$ in $B$ such that $X\in U$, $(\forall\alpha<\lambda)(\widetilde{\alpha}\in U)$ and every function in $G$ is constant on a set in $U$.
\end{definition}

For the reader's convenience, let us recall that Carr showed that, under reasonable assumptions, $\NUP_{\kappa,\lambda,X}$ is equivalent to $X$ being $\Pi^1_1$-indescribable by using the following generalization of a characterization of weakly compact cardinals due to Shelah \cite{MR550384}.

\begin{definition}[\cite{MR808767}]
We say that $X\subseteq P_\kappa\lambda$ has the \emph{$\lambda$-Shelah property} if and only if for every sequence of functions $\<f_x\st x\in X\>\in\prod\{^x 2\st x\in X\}$ 
\[(\exists f:\lambda\to \lambda)(\forall x\in P_\kappa\lambda) (\exists y\in X\cap\widetilde{x})(f_y\restrict x=f\restrict x)\]
where $\widetilde{x}=\{z\in P_\kappa\lambda\st x\subseteq y\}$.
\end{definition}

\begin{theorem}[Theorem 3.5 and Theorem 4.7 in \cite{MR808767}]\label{theorem_carr}
Suppose $\kappa\leq\lambda$ are cardinals, $\kappa$ is inaccessible and $\lambda^{<\kappa}=\lambda$. For every set $X\subseteq P_\kappa\lambda$ we have
\[
\text{$X$ is $\Pi^1_1$-indescribable}\iff\text{$X$ is $\lambda$-Shelah}\\
	\iff\text{$\NUP_{\kappa,\lambda,X}$}.\]
\end{theorem}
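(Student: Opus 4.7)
The plan is to establish the cyclic chain $\NUP_{\kappa,\lambda,X} \Rightarrow X\text{ is }\Pi^1_1\text{-indescribable} \Rightarrow X\text{ is }\lambda\text{-Shelah} \Rightarrow \NUP_{\kappa,\lambda,X}$, adapting the classical Kunen--Shelah proof of the analogous equivalence for weakly compact cardinals to the two-cardinal setting and exploiting the coding apparatus that $\lambda^{<\kappa}=\lambda$ provides inside $V_\kappa(\kappa,\lambda)$.

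For $\NUP \Rightarrow \Pi^1_1$-indescribability, given $R \subseteq V_\kappa(\kappa,\lambda)$ and a true $\Pi^1_1$-sentence $\varphi = \forall S\,\psi(S)$ over $(V_\kappa(\kappa,\lambda),\in,R)$, I would assemble a $\kappa$-complete field $B$ of subsets of $P_\kappa\lambda$ of size $\lambda$ containing $X$, each $\widehat{\{\alpha\}}$ for $\alpha<\lambda$, and sets coding $R$ together with Skolem data for $\psi$; collect the corresponding Skolem witnesses into a family $G$ of regressive functions. $\NUP_{\kappa,\lambda,X}$ produces a $\kappa$-complete fine $B$-ultrafilter $U$ containing $X$ on which every $g \in G$ is constant. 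Form the ultrapower of a transitive $M \models \ZFC^-$ of size $\lambda$ containing $R,X,B,G$ by $U$ to obtain $j : M \to N$ with $\crit(j) = \kappa$, $j(\kappa) > \lambda$, and $[\id]_U = j"\lambda \in j(X)$; a \L{}o\'s-style argument, combined with Lemma \ref{lemma_restrict_P_kappa_lambda} and Lemma \ref{lemma_transitive_part}, then locates an $x \in X$ with $(V_{\kappa_x}(\kappa_x,x),\in,R \cap V_{\kappa_x}(\kappa_x,x)) \models \varphi$.

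For $\Pi^1_1$-indescribability $\Rightarrow \lambda$-Shelah, fix $\<f_x : x \in X\>$ with $f_x \in {}^x 2$. I would form the two-cardinal analogue of the Shelah tree whose nodes at level $x$ are those $s \in {}^x 2$ for which the matching set $\{y \in X \cap \hat{x} : f_y \restrict x = s\}$ is cofinal in $\hat{x}$ (or, more robustly, remains $\Pi^1_1$-indescribable). The failure of the $\lambda$-Shelah property amounts to the absence of a level-$\lambda$ branch through this tree, which is a first-order assertion over $(V_\kappa(\kappa,\lambda),\in,X,\<f_x\>)$; reflecting its negation via $\Pi^1_1$-indescribability at some $x \in X$ supplies a branch at level $\kappa_x$ that, combined with $1$-closedness furnished by Theorem \ref{theorem_characterizations}, can be patched upward to a global $f$. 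I expect the main obstacle here to be calibrating ``large'' for the tree levels so that branch-existence stays $\Pi^1_1$-expressible and the reflected data patch together coherently.

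For $\lambda$-Shelah $\Rightarrow \NUP_{\kappa,\lambda,X}$, enumerate $B=\{B_\alpha : \alpha<\lambda\}$ and $G=\{g_\alpha : \alpha<\lambda\}$. For each $y \in P_\kappa\lambda$ let $B \restrict y$ be the subalgebra generated by $\{B_\alpha : \alpha \in y\}$, of size less than $\kappa$, and by a routine Boolean-algebraic argument choose a $\kappa$-complete ultrafilter on $B \restrict y$ containing the local trace of $X$ and each $\widehat{\{\beta\}}$ with $\beta \in y$ that constantises every $g_\alpha$ with $\alpha \in y$; code this ultrafilter as $f_y : y \to 2$ via a fixed bijection. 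Apply $\lambda$-Shelah to $\<f_y\>$ to obtain $f : \lambda \to 2$, and decode $f$ into $U = \{B_\alpha \in B : f \text{ declares } B_\alpha \in U\}$. The Shelah matching condition---that for every $x \in P_\kappa\lambda$ some $y \in X \cap \hat{x}$ agrees with $f$ on $x$---supplies precisely the coherence needed to verify that $U$ is a $\kappa$-complete fine ultrafilter in $B$ containing $X$ on which each $g_\alpha$ is constant. The main difficulty is arranging the coding scheme $f_y$ uniformly enough that the decoding from the global $f$ yields a genuinely consistent ultrafilter rather than merely a coherent family of local approximations.
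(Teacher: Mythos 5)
The paper offers no proof of Theorem \ref{theorem_carr}: it is imported verbatim from Carr \cite{MR808767} (Theorems 3.5 and 4.7), so there is nothing internal to compare against and your proposal must be judged on its own terms. Two of your three legs are sound in outline and essentially follow Carr: $\NUP_{\kappa,\lambda,X}\Rightarrow\Pi^1_1$-indescribability via the ultrapower of a suitable transitive $M$, transporting $\Pi^1_1$ truth along the isomorphism $j\restrict V_\kappa(\kappa,\lambda)\colon V_\kappa(\kappa,\lambda)\to V_\kappa(\kappa,j"\lambda)^N$; and $\lambda$-Shelah $\Rightarrow\NUP_{\kappa,\lambda,X}$ by coding into $f_y$ the principal local data $\{\alpha\in y : y\in B_\alpha\}$ together with the values $g_\alpha(y)$ for $\alpha\in y$, then decoding the Shelah witness (this is exactly Carr's Theorem 3.5, and your closing caveat about the coding is the right place to focus).

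The genuine gap is the middle leg, $\Pi^1_1$-indescribability $\Rightarrow\lambda$-Shelah. First, ``absence of a level-$\lambda$ branch'' is not a first-order assertion: a branch is a function $f\colon\lambda\to 2$, a second-order object over $V_\kappa(\kappa,\lambda)$, so ``no branch'' is $\Pi^1_1$ (and the list $\<T_x\st x\in P_\kappa\lambda\>$ must be adjoined as a predicate, since ``the matching set is $\Pi^1_1$-positive'' is not first-order there). Second, you propose to ``reflect its negation,'' but the negation is the $\Sigma^1_1$ statement ``a branch exists'' --- precisely the conclusion you seek --- and $\Pi^1_1$-indescribability only reflects true $\Pi^1_1$ statements downward. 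Third, ``patching the local branch upward to a global $f$'' is the original problem in disguise: a branch at level $x_0$ is a function $x_0\to 2$, and extending it coherently through all of $P_\kappa\lambda$ is exactly branch-existence. The correct use of your tree is: set $T_x=\{s\in{}^x2\st\{y\in X\cap\hat{x}\st f_y\restrict x=s\}\in\Pi^1_1(\kappa,\lambda)^+\}$; each level is nonempty because $X\cap\hat{x}$ is $\Pi^1_1$-positive and splits into $2^{|x|}<\kappa$ matching sets while the ideal is $\kappa$-complete (Abe's result quoted before Lemma \ref{lemma_sentence}); the levels cohere, i.e.\ $s\in T_x$ and $z\subseteq x$ imply $s\restrict z\in T_z$, since the matching set only grows under restriction. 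Now assume toward a contradiction that no branch exists; this $\Pi^1_1$ statement reflects to some $x_0$ with $x_0\cap\kappa=\kappa_{x_0}$, where any $s_0\in T_{x_0}$ refutes it outright, because coherence makes $s_0$ a branch of the restricted list $\<T_z\st z\in P_{\kappa_{x_0}}x_0\>$ (Lemma \ref{lemma_restrict_P_kappa_lambda} identifies the restricted levels). No upward patching occurs. Alternatively, bypass the tree as Carr does: prove $\Pi^1_1$-indescribability $\Rightarrow\NUP_{\kappa,\lambda,X}$ by reflecting ``no suitable ultrafilter on $B$ exists'' to some $x_0\in X$ and refuting it with $\{b\in B\st x_0\in b\}$, which contains the trace of $X$ precisely because $x_0\in X$; then $\NUP\Rightarrow\lambda$-Shelah is a routine $\kappa$-completeness computation.
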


\begin{proof}[Proof of Theorem \ref{theorem_characterization_embedding}] By Theorem \ref{theorem_carr} and Theorem \ref{lemma_basic_embedding_characterizations}, it suffices to show that Carr's filter property $\NUP_{\kappa,\lambda,W}$ is equivalent to Schanker's filter property Theorem \ref{lemma_basic_embedding_characterizations}(3), but it is easy to see that Schanker's filter property is a slight reformulation of Carr's filter property. Notice that if $\mathcal{A}$ is a collection of subsets of $P_\kappa\lambda$ is in Theorem \ref{lemma_basic_embedding_characterizations}(3), then by the inaccessibility of $\kappa$, there is a $\kappa$-complete collection $B$ of subsets of $P_\kappa\lambda$. Also notice that in Carr's statement of $\NUP_{\kappa,\lambda,W}$, the assertion that $U$ is a $\kappa$-complete ultrafilter in $U$ means the same thing as Schanker's statement that $U$ is a $\kappa$-complete filter \emph{measuring all sets in $B$}.
\end{proof}

\begin{corollary}\label{corollary_filter_base}
Suppose $\kappa\leq\lambda$ are cardinals and $\lambda^{<\kappa}=\lambda$. If $P_\kappa\lambda$ is weakly compact then the following hold.
\begin{align*}
\NWC_{\kappa,\lambda}&=\Pi^1_1(\kappa,\lambda)=\{Z\subseteq P_\kappa\lambda\st \text{$Z\cap C=\emptyset$ for some $1$-club $C\subseteq P_\kappa\lambda$}\}\\
\NWC_{\kappa,\lambda}^+&=\Pi^1_1(\kappa,\lambda)^+=\{W\subseteq P_\kappa\lambda\st \text{$W\cap C\neq\emptyset$ for every $1$-club $C\subseteq P_\kappa\lambda$}\}\\
\NWC_{\kappa,\lambda}^*&=\Pi^1_1(\kappa,\lambda)^*=\{C\subseteq P_\kappa\lambda\st \text{$C$ contains a $1$-club}\}
\end{align*}
\end{corollary}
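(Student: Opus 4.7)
The plan is to read off all three equalities directly from Theorem \ref{theorem_characterizations}, which has already established the equivalence of weak compactness and $1$-stationarity for subsets of $P_\kappa\lambda$ under the hypotheses $\lambda^{<\kappa}=\lambda$ and $P_\kappa\lambda$ weakly compact. No new combinatorics is needed; the only work is to track the standard three-way translation among an ideal $I$, its collection of positive sets $I^+$, and its dual filter $I^*$.

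First I would dispatch the second equality, since it is the closest to the definitions. By definition $W\in \NWC_{\kappa,\lambda}^+$ means $W$ is not in $\NWC_{\kappa,\lambda}$, i.e.\ $W$ is weakly compact. Applying Theorem \ref{theorem_characterizations} (the equivalence of (2) and (3)) converts this to the condition that $W\cap C\neq\emptyset$ for every $1$-club $C\subseteq P_\kappa\lambda$, which is exactly the desired description of $\NWC_{\kappa,\lambda}^+$.

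Next, the first equality follows by negating the second. A set $Z$ lies in $\NWC_{\kappa,\lambda}$ iff $Z\notin \NWC_{\kappa,\lambda}^+$, and by the characterization of $\NWC_{\kappa,\lambda}^+$ just obtained this happens iff there exists a $1$-club $C\subseteq P_\kappa\lambda$ with $Z\cap C=\emptyset$. Finally, the third equality follows from the first by complementation: $C\in \NWC_{\kappa,\lambda}^*$ iff $P_\kappa\lambda\setminus C\in \NWC_{\kappa,\lambda}$ iff there is a $1$-club $D\subseteq P_\kappa\lambda$ with $(P_\kappa\lambda\setminus C)\cap D=\emptyset$, which is equivalent to $D\subseteq C$, i.e.\ $C$ contains a $1$-club.

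Since the entire content of the corollary is a bookkeeping translation of Theorem \ref{theorem_characterizations}, there is no genuine obstacle; the only thing to be careful about is to invoke the hypothesis $\lambda^{<\kappa}=\lambda$ together with weak compactness of $P_\kappa\lambda$ (which are exactly what Theorem \ref{theorem_characterizations} needs) so that the pivotal implication ``$1$-stationary $\Rightarrow$ weakly compact'' is available on both sides of each equality.
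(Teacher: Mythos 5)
Your proposal is correct and matches the paper's intent exactly: the paper states this corollary without proof as an immediate consequence of Theorem \ref{theorem_characterizations}, and your bookkeeping translation between $\NWC_{\kappa,\lambda}$, $\NWC_{\kappa,\lambda}^+$, and $\NWC_{\kappa,\lambda}^*$ via the equivalence of weak compactness and $1$-stationarity is precisely the argument being left to the reader.
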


\section{Applications}\label{section_applications}

\subsection{On a question of Cox-L\"ucke}\label{section_cox_lucke}

The following question was posed in Cox-L\"ucke. See Section \ref{section_introduction} for relevant background and definitions.
\begin{question}\cite[Question 7.4]{MR3620068}\label{question_cox_lucke}
Assume $(\kappa^+)^{<\kappa}=\kappa^+$. Let $\kappa$ be an inaccessible cardinal such that there is a normal filter $\mathcal{F}$ on $P_\kappa\kappa^+$ with the property that every partial order of cardinality $\kappa^+$ that satisfies the $\kappa$-chain condition is $\mathcal{F}$-layered. Must $\kappa$ be measurable?
\end{question}

The answer is no. The proof of the following lemma is very similar to that of \cite[Lemma 4.3]{MR3620068}.

\begin{lemma}\label{lemma_cox_lucke}
Suppose $P_\kappa\lambda$ is weakly compact. Then every partial order of cardinality at most $\lambda$ that satisfies the $\kappa$-chain condition is $\NWC_{\kappa,\lambda}^*$-layered.
\end{lemma}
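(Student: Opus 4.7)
The plan is to argue by contradiction. Fix a $\kappa$-c.c.\ partial order $\P$ with $|\P| \leq \lambda$ and a surjection $s : \lambda \to \P$, and set
\[W = \{a \in P_\kappa\lambda : s[a] \notin \text{Reg}_\kappa(\P)\}.\]
To show $W \in \NWC_{\kappa,\lambda}$, I will suppose that $W$ is weakly compact and derive a contradiction. Coding $\P$ and $s$ as subsets of $\lambda$ and applying Lemma \ref{lemma_basic_embedding_characterizations}(2), I will produce a transitive $M \models \ZFC^-$ of size $\lambda$ with $\P, s, W, \lambda \in M$ and $M^{<\kappa} \cap V \subseteq M$, a transitive $N$, and an elementary embedding $j : M \to N$ with $\crit(j) = \kappa$, $j(\kappa) > \lambda$, and $j"\lambda \in j(W)$. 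Since every antichain of $\P$ of $V$-size less than $\kappa$ lies in $M$ by closure and $\P$ is truly $\kappa$-c.c.\ in $V$, $M$ correctly sees that $\P$ is $\kappa$-c.c.

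Next I will unfold the statement $j"\lambda \in j(W)$. Because $s$ is a surjection onto $\P$,
\[j(s)[j"\lambda] = \{j(s(\alpha)) : \alpha < \lambda\} = j"\P,\]
so $j"\lambda \in j(W)$ asserts that, in $N$, $j"\P$ is \emph{not} a regular suborder of $j(\P)$ of $N$-cardinality less than $j(\kappa)$. I will derive a contradiction by establishing the opposite. The cardinality condition is immediate from $|j"\P| \leq \lambda < j(\kappa)$. For regularity, I will first observe that $j\restrict\P : \P \to j"\P$ is an order-isomorphism between the $M$-poset $\P$ and the sub-poset $j"\P$ of $j(\P)$ in $N$, by elementarity of $j$. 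Hence incompatibility in $j"\P$ pulls back along $j\restrict\P$ to incompatibility in $\P$ inside $M$, which by elementarity of $j$ pushes forward to incompatibility in $j(\P)$ inside $N$.

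For preservation of maximal antichains, I will let $A$ be a maximal antichain of $j"\P$ in $N$ and set $B = \{p \in \P : j(p) \in A\}$, so that $A = j"B$. Transferring maximality through the order-isomorphism $j\restrict\P$ as above, $B$ is a maximal antichain of $\P$ in $M$. By the $\kappa$-c.c.\ of $\P$ in $M$, $|B|^M < \kappa = \crit(j)$, and consequently $j(B) = j"B = A$. Elementarity of $j$ now yields that $A = j(B)$ is a maximal antichain of $j(\P)$ in $N$, completing the proof that $j"\P$ is a regular suborder of $j(\P)$ in $N$ and contradicting $j"\lambda \in j(W)$. The main delicacy will be tracking absoluteness between $V$, $M$, and $N$: one must ensure that $M$ witnesses the $\kappa$-c.c.\ of $\P$, that the small antichain $B$ really lies in $M$, and that ``regular suborder'' and ``maximal antichain'' transfer faithfully through $j$. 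The $\kappa$-c.c.\ hypothesis enters at exactly one point, where $|B|^M < \crit(j)$ forces $j(B) = j"B$, so that the elementary translation of ``$B$ is a maximal antichain of $\P$'' in $M$ lands on the given antichain $A$ itself in $N$.
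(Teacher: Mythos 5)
Your argument is correct and is essentially the paper's proof (which follows Cox--L\"ucke's Lemma 4.3): the core step in both is that $j(s)[j''\lambda]=j''\P$ is a regular suborder of $j(\P)$ in $N$, because $j\restriction\P$ is an isomorphism onto $j''\P$ and any maximal antichain of $j''\P$ pulls back to a maximal antichain $B$ of $\P$, which by the $\kappa$-c.c.\ has size below $\crit(j)$, lies in $M$ by the closure of $M$, and so is sent by $j$ to a maximal antichain $j(B)=j''B$ of $j(\P)$. The only cosmetic difference is that you phrase this contrapositively (assuming the complement is weakly compact and deriving a contradiction) where the paper argues directly that $j''\lambda\in j(X)$ for $X=\{x\st s[x]\in\mathop{\rm Reg}_\kappa(\P)\}$, hence $X\in\NWC_{\kappa,\lambda}^*$.
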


\newcommand{\Reg}{\mathop{\rm Reg}}

\begin{proof}
Fix a surjection $s:\lambda\to \P$. We must show that $X=\{x\in P_\kappa\lambda\st s[x]\in\Reg_\kappa(\P)\}\in\NWC_{\kappa,\lambda}^*$. Let $M\models\ZFC^-$ be transitive of size $\lambda^{<\kappa}$ with $\lambda,X,\P,s,\Reg_\kappa(\P),\ldots\in M$ and let $j:M\to N$ be an elementary embedding with critical point $\kappa$ such that $j(\kappa)>\lambda$ and $j"\lambda\in N$.

Just as in Cox-L\"ucke, $j[\P]$ is a suborder of $j(\P)$ and $j\restrict\P:\P\to j[\P]$ is an isomorphism of partial orders. If $A$ is a maximal antichain of $j[\P]$ then $j^{-1}[A]$ is a maximal antichain of $\P$ and hence $|A|<\kappa$. Since $\crit(j)=\kappa$, it follows by elementarity that $A=j[j^{-1}[A]]=j(j^{-1}[A])$ is a maximal antichain of $j(\P)$. Hence, in $N$, $j[\P]$ is a regular suborder of $j(\P)$. Since $j(s)[j"\lambda]=j[s[\lambda]]=j[\P]\in\Reg_{j(\kappa)}(j(\P))^N=j(\Reg_\kappa(\P))$ we conclude that $j"\lambda\in j(X)$. Thus $X\in\NWC_{\kappa,\lambda}^*$.
\end{proof}

Recall that, as a matter of terminology, $P_\kappa\lambda$ is weakly compact if and only if $\kappa$ is nearly $\lambda$-supercompact. Schanker proved that if the near $\kappa^+$-supercompactness of $\kappa$ is indestructible by the forcing to add $\kappa^+$ Cohen subsets of $\kappa$, then the near $\kappa^+$-supercompactness of $\kappa$ is indestructible by the forcing to add any number of Cohen subsets of $\kappa$. This allowed Schanker to then show \cite[Theorem 4.10 (2)]{MR2989393} that if $\kappa$ is nearly $\kappa^+$-supercompact and $2^\kappa=\kappa^+$ then there is a forcing extension $V[G]$ in which $\kappa$ is nearly $\kappa^+$-supercompact and the $\GCH$ fails first at $\kappa$, hence $\kappa$ is not measurable. Translating Schanker's results into our terminology we obtain the following.

\begin{proposition}
Suppose $P_\kappa\kappa^+$ is weakly compact and $\GCH$ holds. There is a cofinality-preserving forcing $\P$ such that if $G\subseteq\P$ is generic then in $V[G]$ the following hold.
\begin{enumerate}
\item $\GCH$ fails first at $\kappa$, hence $(\kappa^+)^{<\kappa}=\kappa^+$ and $\kappa$ is not measurable.
\item $(P_\kappa\kappa^+)^{V[G]}$ is weakly compact, hence $\mathcal{F}=_{\defn}(\NWC_{\kappa,\kappa^+}^*)^{V[G]}$ is a nontrivial normal ideal and by Lemma \ref{lemma_cox_lucke}, every partial order of cardinality at most $\kappa^+$ that satisfies the $\kappa$-chain condition is $\mathcal{F}$-layered.
\end{enumerate}
\end{proposition}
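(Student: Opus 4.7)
The plan is to directly invoke Schanker's Theorem 4.10(2) of \cite{MR2989393} and then apply Lemma \ref{lemma_cox_lucke}. By Lemma \ref{lemma_basic_embedding_characterizations}, the hypothesis that $P_\kappa\kappa^+$ is weakly compact is equivalent to $\kappa$ being nearly $\kappa^+$-supercompact in Schanker's sense, so Schanker's theorem applies verbatim after a translation of terminology. Specifically, I would let $\P = \P_0 * \dot{\Add}(\kappa,\kappa^{++})$, where $\P_0$ is Schanker's lottery preparation chosen so as to make near $\kappa^+$-supercompactness indestructible under ${<}\kappa$-directed closed forcing of cardinality at most $\kappa^+$.

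Under $\GCH$, the preparation $\P_0$ has size $\kappa$ and is $\kappa$-c.c., while $\Add(\kappa,\kappa^{++})$ is ${<}\kappa$-directed closed and $\kappa^+$-c.c.; hence $\P$ preserves cofinalities. In $V[G]$ we have $2^\kappa = \kappa^{++}$ from the Cohen forcing, while $\GCH$ is preserved below $\kappa$ because nontrivial stages of $\P_0$ occur only at certain inaccessibles and the tail forcing contributes no new bounded subsets of $\kappa$. Thus $\GCH$ fails first at $\kappa$, and in particular $(\kappa^+)^{<\kappa}=\kappa^+$.

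For the non-measurability of $\kappa$ in $V[G]$: if $\kappa$ were measurable with normal ultrapower $j\colon V[G]\to M$, then $M\models 2^\kappa > \kappa^+$, and by elementarity applied to the statement ``there is a cardinal below $j(\kappa)$ at which $\GCH$ fails,'' the set $\{\alpha<\kappa : 2^\alpha > \alpha^+\}$ would be unbounded in $\kappa$, contradicting $\GCH$ below $\kappa$ in $V[G]$. For the remaining part of item (2), Schanker's indestructibility yields that $(P_\kappa\kappa^+)^{V[G]}$ is weakly compact. Consequently $\mathcal{F}=(\NWC_{\kappa,\kappa^+}^*)^{V[G]}$ is a nontrivial normal filter, and applying Lemma \ref{lemma_cox_lucke} inside $V[G]$ with $\lambda=\kappa^+$ shows that every $\kappa$-c.c.\ partial order of cardinality at most $\kappa^+$ is $\mathcal{F}$-layered.

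The main obstacle, which we import as a black box from Schanker, is the indestructibility argument itself: given any embedding $j\colon M\to N$ witnessing the weak compactness of a set $W\subseteq P_\kappa\kappa^+$, one must lift $j$ through $\P$, and this requires building a suitable master condition for the tail $j(\P)\restrict[\kappa,j(\kappa))$ inside the target model. The lottery preparation $\P_0$ is engineered precisely so that such a master condition can always be constructed. Once this lifting succeeds, items (1) and (2) follow cleanly from the translation above together with Lemma \ref{lemma_cox_lucke}.
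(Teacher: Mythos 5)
Your proposal matches the paper's treatment: the paper gives no proof of this proposition at all, simply observing that it is Schanker's Theorem 4.10(2) translated into the weakly-compact-subset terminology (using that $P_\kappa\kappa^+$ weakly compact is by definition the same as $\kappa$ nearly $\kappa^+$-supercompact) combined with Lemma \ref{lemma_cox_lucke}, which is exactly what you do. Your added details (the preparation followed by $\Add(\kappa,\kappa^{++})$, the reflection argument for non-measurability) are correct elaborations of what the paper leaves implicit in the citation.
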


This answers Question \ref{question_cox_lucke} and establishes Theorem \ref{theorem_answer} in the case that $\lambda=\kappa^+$. The case in which $\lambda\geq\kappa^{++}$ requires more work: the usual reflection arguments \cite[Theorem 4.10 (3)]{MR2989393} show if $\kappa$ is nearly $\kappa^{++}$-supercompact then $\GCH$ cannot fail first at $\kappa$. However, by carrying out a delicate argument using the \emph{lottery preparation} \cite{MR1736060} and the fact that in forcing extensions $V\subseteq V[G]$ satisfying the \emph{$\delta$-approximation and cover properties} for some $\delta<\kappa$, definable elementary embeddings $h:V[G]\to N$ with critical point $\kappa$ must lift ground model embeddings \cite[Corollary 8]{MR2063629}, Schanker proved the following.
\begin{theorem}[Schanker \cite{MR2989393}]
If $\kappa$ is nearly $\lambda$-supercompact for some $\lambda\geq 2^\kappa$ such that $\lambda^{<\lambda}=\lambda$, then there exists a forcing extension preserving all cardinals and cofinalities above $\kappa$ where $\kappa$ is nearly $\lambda$-supercompact but not measurable. Furthermore, in this extension $2^\kappa=\lambda^+$, and if the $\SCH$ hold below $\kappa$ in the ground model, then no cardinals or cofinalities were collapsed.
\end{theorem}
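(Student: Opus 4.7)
The plan is a two-stage forcing: a lottery preparation $\P_\kappa$ of length $\kappa$, followed by $\dot\Q = \Add(\kappa,\lambda^+)$. For the preparation, take an Easton-support iteration where at each inaccessible stage $\gamma<\kappa$ satisfying $f"\gamma\subseteq\gamma$ (for a suitable Menas-style guessing function $f\colon\kappa\to V_\kappa$) we force with the lottery sum of all $\gamma$-directed closed posets of rank below the next closure point of $f$. This is arranged so that the near $\lambda$-supercompactness of $\kappa$ becomes indestructible under subsequent $\kappa$-directed closed forcing of size at most $\lambda^+$, in particular under $\dot\Q$. The tail $\dot\Q$ is ${<}\kappa$-closed and $\kappa^+$-c.c.\ of size $\lambda^+$ (using $\lambda^{<\lambda}=\lambda$), so $\P_\kappa*\dot\Q$ preserves cardinals $\geq\kappa^+$; assuming $\SCH$ below $\kappa$ in $V$, the standard analysis of the lottery preparation preserves cardinals $\leq\kappa$ as well. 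In the extension $V[G*H]$ one obtains $2^\kappa=\lambda^+$.

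To verify near $\lambda$-supercompactness survives, fix $A\subseteq\lambda$ in $V[G*H]$ with $\P_\kappa*\dot\Q$-name $\dot A\in V$. Choose a transitive $M\models\ZFC^-$ in $V$ of size $\lambda$ with $M^{<\lambda}\cap V\subseteq M$ and $\lambda,\dot A,\P_\kappa*\dot\Q,f\in M$. Apply the indestructibility of near $\lambda$-supercompactness of $\kappa$ in $V$ to obtain $j\colon M\to N$ with $\crit(j)=\kappa$, $j(\kappa)>\lambda$, $j"\lambda\in N$, and $j(f)(\kappa)>\lambda$. Factor $j(\P_\kappa)\cong\P_\kappa*\dot{\P}'$ where $\P'$ is ${<}\lambda^+$-closed in $N[G]$; since $|N[G]|=\lambda$ and $V[G]$ is closed enough, diagonally construct a generic $G'\in V[G]$ for $\P'$ over $N[G]$ and lift $j$ to $M[G]\to N[G*G']$. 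Then form the master condition $m=\bigcup j"H$ inside $j(\Q)^{N[G*G']}$, which is legitimate since $|j"H|\leq\lambda<j(\kappa)$; a generic $\hat H\ni m$ over $N[G*G']$ yields a lift $j\colon M[G*H]\to N[G*G'*\hat H]$ with $j"\lambda\in N[G*G'*\hat H]$, witnessing that $\kappa$ remains nearly $\lambda$-supercompact.

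To show $\kappa$ is not measurable in $V[G*H]$, invoke the $\kappa$-approximation and cover properties of $\P_\kappa*\dot\Q$ together with the Hamkins lifting theorem of \cite{MR2063629}: any elementary embedding $h\colon V[G*H]\to N$ with critical point $\kappa$ definable in $V[G*H]$ must restrict to a ground-model embedding $j\colon V\to M\subseteq V$ with the same critical point. A normal measure on $\kappa$ in $V[G*H]$ would therefore produce such a ground-model $j$. Tune the lottery preparation so that the generic $G$ encodes a specific combinatorial object below $\kappa$ (for instance, a particular pattern of $\GCH$-failure recorded in the lottery outcomes) that cannot reflect to $M$ under $j$: since $j(\P_\kappa)\restrict\kappa+1$ agrees with $G$, $j$ is forced to preserve the coded pattern, but the $\SCH$ constraints on $V$ combined with the rigidity of the encoding contradict the existence of a normal $V$-measure whose ultrapower matches $j$.

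The principal obstacle is this last step. When $\lambda=\kappa^+$, the failure of $\GCH$ first at $\kappa$ alone rules out measurability by ordinary ultrapower reflection (since $P(\kappa)^{V[G*H]}\subseteq M$ and $M\models 2^\kappa=\kappa^+$ by elementarity). For $\lambda\geq\kappa^{++}$, however, near $\kappa^{++}$-supercompactness in $V$ forbids $\GCH$ from first failing at $\kappa$, so $\GCH$ must be broken below $\kappa$ and the direct reflection argument is unavailable; instead, the non-measurability must be extracted from the interaction between the Hamkins descent to $V$ and the carefully chosen combinatorial structure of the lottery preparation, which is the delicate heart of Schanker's proof.
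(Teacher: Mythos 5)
First, a framing point: the paper does not actually prove this theorem --- it is imported verbatim from Schanker \cite{MR2989393}, with only a one-sentence indication that the proof combines the lottery preparation with Hamkins' approximation-and-cover theorem. Your architecture (lottery preparation, then $\Add(\kappa,\lambda^+)$, then the Hamkins descent to rule out measurability) matches that indication, but two steps of your reconstruction have genuine gaps.

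The first is the master condition. You set $m=\bigcup j"H$ for $H$ generic on $\Add(\kappa,\lambda^+)$ and justify it by ``$|j"H|\leq\lambda<j(\kappa)$.'' That cardinality claim is false: $\bigcup H$ is a function with domain of size $\lambda^+$, so $\bigcup j"H$ has size $\lambda^+$, while a near-$\lambda$-supercompactness embedding only gives $j(\kappa)>\lambda$; indeed the target $N$ can be taken transitive of size $\lambda$, so $j(\kappa)<\lambda^+$ and no set of size $\lambda^+$ can even be an element of $N[G*G']$, let alone a condition of $\Add(j(\kappa),j(\lambda^+))^{N[G*G']}$. Relatedly, $\Add(\kappa,\lambda^+)$ has cardinality $\lambda^+$ and therefore does not fit inside your transitive $M$ of size $\lambda$ in the first place. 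The repair --- and what Schanker actually does, as the paper itself describes for the $\lambda=\kappa^+$ case --- is to prove indestructibility under $\Add(\kappa,\lambda)$ only, and then transfer to adding any number of Cohen subsets by a factoring argument: a nice name for a subset of $\lambda$ involves only $\lambda$ many coordinates, so $\Add(\kappa,\lambda^+)\cong\Add(\kappa,S)\times\Add(\kappa,\lambda^+\setminus S)$ with $|S|\leq\lambda$, and only the first factor needs to be absorbed into the lifting argument.

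The second and more serious issue is non-measurability, which is the entire content of the theorem beyond indestructibility. You correctly observe that for $\lambda\geq\kappa^{++}$ the cheap reflection argument (``$\GCH$ fails first at $\kappa$'') is unavailable, but your substitute --- ``tune the lottery preparation so that $G$ encodes a combinatorial object that cannot reflect to $M$ under $j$'' --- is a placeholder, not a proof: you never specify the object, why the descended ground-model embedding $j\colon V\to M$ must respect it, or what contradiction with $\SCH$ results. Your own closing paragraph concedes that this ``delicate heart'' is missing. As written, the proposal establishes (modulo the first repair) that $\kappa$ remains nearly $\lambda$-supercompact with $2^\kappa=\lambda^+$, but not that $\kappa$ is non-measurable, so the statement is not proved. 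The cardinal-preservation and $\SCH$ clauses are likewise asserted rather than derived, though those are routine.
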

Again, by translating the previous theorem of Schanker's to our terminology, and applying Lemma \ref{lemma_cox_lucke} we obtain Theorem \ref{theorem_answer}.




\subsection{Two-cardinal indescribability and generic embeddings}\label{section_generic_embeddings}

Let us recall the following standard fact concerning generic ultrapowers (see \cite{MR2768692} for more information).

\begin{lemma}[Folklore]\label{lemma_normal_generic_ultrapower}
Suppose $\kappa$ is regular, $\kappa\leq\lambda$ with $\lambda^{<\kappa}=\lambda$, and $I$ is a $\kappa$-complete normal fine ideal on $P_\kappa\lambda$. If $G\subseteq P(P_\kappa\lambda)/I$ is generic and $j:V\to M=V^{P_\kappa\lambda}/G\subseteq V[G]$ is the corresponding generic ultrapower then the following conditions hold.
\begin{enumerate}
\item $G$ extends the filter $I^*$ dual to $I$.
\item $\crit(j)=\kappa$ and $j(\kappa)>\lambda$.
\item $[\id]_G=j"\lambda\in M$ and thus for all $X\in P(P_\kappa\lambda)^V$ we have $X\in G$ if and only if $j"\lambda\in^M j(X)$.
\item For every function $f:P_\kappa\lambda\to V$ in $V$ we have $j(f)(j"\lambda)=[f]_G$.
\item $M$ is wellfounded up to $(\lambda^+)^V$.
\end{enumerate}
\end{lemma}

The previous lemma easily yields the following standard result.

\begin{proposition}[Folklore]
Suppose $\kappa$ is regular, $\kappa\leq\lambda$ and $\lambda^{<\kappa}=\lambda$. A set $S\subseteq P_\kappa\lambda$ is stationary if and only if there is a generic elementary embedding $j:V\to M\subseteq V[G]$ with critical point $\kappa$ such that $j(\kappa)>\lambda$ and $j"\lambda\in j(S)\cap M$.
\end{proposition}

\begin{proof}
Suppose $S$ is stationary and let $G\subseteq P(\kappa)/(\NS_{\kappa,\lambda}\restrict S)$ be generic. Since $\NS_{\kappa,\lambda}\restrict S$ is a $\kappa$-complete normal ideal on $P_\kappa\lambda$ we have $\crit(j)=\kappa$ and $[\id]_G=j"\lambda$. Hence $j"\lambda\in M$. Since $\lambda=j(f)(j"\lambda)=[f]_G$ where $f(x)=\ot(x)$ and $j(\kappa)=[c_\kappa]_G$ we have $\lambda<j(\kappa)$. Clearly $S\in (\NS_{\kappa,\lambda}\restrict S)^*\subseteq G$ and hence $\kappa\in j(S)$.

Conversely, suppose that $j:V\to M\subseteq V[G]$ is a generic elementary embedding with critical point $\kappa$ such that $j(\kappa)>\lambda$ and $j"\lambda\in j(S)\cap M$, where $j$ is obtained by forcing with some poset $\P$. Fix a club $C\subseteq P_\kappa\lambda$ in $V$. Then
\[I=\{X\in P(P_\kappa\lambda)\st\ \forces_\P j"\lambda\notin j(X)\}\]
is a normal ideal on $P_\kappa\lambda$ in $V$. This implies that $C\in I^*$, in other words, $\forces_\P$ $j"\lambda\in j(C)$. Thus $M\models j(S)\cap j(C)\not=\emptyset$ and by elementarity $S\cap C\not=\emptyset$.
\end{proof}

The following lemma is a straightforward generalization of standard fact about $\Pi^m_n$-indescribable filter on a cardinal $\kappa$ (taking $n=m=1$ in the next lemma yields a result proven above; see the proof of Theorem \ref{theorem_n_club} in Section \ref{section_n_clubs}).

\begin{lemma}\label{lemma_in_the_filter}
Suppose $\kappa\leq\lambda$ are cardinals and $P_\kappa\lambda$ is $\Pi^m_n$-indescribable. Further suppose that $R_1,\ldots,R_k\subseteq V_\kappa(\kappa,\lambda)$ where $k<\omega$ and $\varphi$ is a $\Pi^m_n$ sentence. If $(V_\kappa(\kappa,\lambda),\in,R_1,\ldots,R_k)\models\varphi$ then the set
\[\{x\in P_\kappa\lambda\st (V_{\kappa_x}(\kappa_x,x),\in,R_1\cap V_{\kappa_x}(\kappa_x,x),\ldots,R_k\cap V_{\kappa_x}(\kappa_x,x))\models\varphi\}\]
is in the filter $\Pi^m_n(\kappa,\lambda)^*$.
\end{lemma}

Next, we will provide a characterization of $\Pi^m_n$-indescribable subsets of $P_\kappa\lambda$ in terms of generic embeddings.

\begin{proposition} Suppose $n,m<\omega$, $\kappa$ is regular, $\lambda\geq\kappa$ is a cardinal with $\lambda^{<\kappa}=\lambda$ and $S\subseteq P_\kappa\lambda$. The following are equivalent.
\begin{enumerate}
\item $S$ is $\Pi^m_n$-indescribable in $P_\kappa\lambda$
\item There is a generic embedding $j:V\to M\subseteq V[G]$ with critical point $\kappa$ such that $\crit(j)=\kappa$, $j(\kappa)>\lambda$, $j"\lambda\in j(S)\cap M$ and for every $\Pi^m_n$-sentence $\varphi$ over $(V_\kappa(\kappa,\lambda),\in,R_1,\ldots, R_k)$ where $R_1,\ldots,R_k\in P(V_\kappa(\kappa,\lambda))^V$ it follows that
\[((V_\kappa(\kappa,\lambda),\in,R_1,\ldots,R_k)\models\varphi)^V\]
implies
\[((V_\kappa(\kappa,j"\lambda),\in, j(R_1)\cap V_\kappa(\kappa,j"\lambda),\ldots,j(R_k)\cap V_\kappa(\kappa,j"\lambda))\models\varphi)^M.\]
\end{enumerate}
\end{proposition}

\begin{proof}
To see that (1) implies (2), suppose $S\subseteq P_\kappa\lambda$ is $\Pi^m_n$-indescribable. Let $G$ be generic for the poset $P(P_\kappa\lambda)/(\Pi^m_n(\kappa,\lambda)\restrict S)-\{[\emptyset]\}$. Since $G$ extends the filter $(\Pi^m_n(\kappa,\lambda)\restrict S)^*$ and $S\in(\Pi^m_n(\kappa,\lambda)\restrict S)^*$, it follows that $S\in G$. Thus, by Lemma \ref{lemma_normal_generic_ultrapower}, if we let $j:V\to M=V^{P_\kappa\lambda}/G\subseteq V[G]$ be the generic ultrapower obtained from $G$, then $j"\lambda\in j(S)\cap M$. Furthermore, if $\varphi$ and $R_1,\ldots, R_k$ are as in the statement of the proposition and \[((V_\kappa(\kappa,\lambda),\in,R_1,\ldots,R_k)\models\varphi)^V,\]
then it follows by Lemma \ref{lemma_in_the_filter} and Lemma \ref{lemma_normal_generic_ultrapower} that 
\[((V_\kappa(\kappa,j"\lambda),\in, j(R_1)\cap V_\kappa(\kappa,j"\lambda),\ldots,j(R_k)\cap V_\kappa(\kappa,j"\lambda))\models\varphi)^M.\]

Conversely, if (2) holds then it follows by elementarity that (1) holds.
\end{proof}

\subsection{Two-cardinal weakly compact diamond}\label{section_diamond}
First, as an application of the $1$-club characterization of weak compactness in Corollary \ref{corollary_filter_base}, we will show that if $\kappa$ is large enough then for every $\lambda\geq\kappa$ with $\lambda^{<\kappa}=\lambda$ and every weakly compact $W\subseteq P_\kappa\lambda$, $\wcdiamond_{\kappa,\lambda}(W)$ holds (see Definition \ref{definition_weakly_compact_diamond}).

\begin{remark}
In what follows we will identify subsets $X\subseteq \lambda$ with functions $X:\ot(X)\to \lambda$ enumerating the elements of $X$ in increasing order; in other words, $X(\alpha)$ denotes the $\alpha$-th element of $X$ where $\alpha<\ot(X)$.
\end{remark}

\begin{proposition}
Suppose $\kappa$ is supercompact and $\lambda\geq\kappa$ is a cardinal with $\lambda^{<\kappa}=\lambda$. If $W\subseteq P_\kappa\lambda$ weakly compact then $\wcdiamond_{\kappa,\lambda}(W)$ holds.
\end{proposition}

\begin{proof}

Suppose $\kappa$ is supercompact and let $\ell:\kappa\to V_\kappa$ be a Laver function \cite{MR0472529}, that is, for any $\lambda$ and any $x\in H_{\lambda^+}$, there is a $\lambda$-supercompactness embedding $j:V\to M$ with critical point $\kappa$, $j(\kappa)>\lambda$, $j"\lambda\in M$ and $j(\ell)(\kappa)=x$. 

Fix $\lambda\geq\kappa$ with $\lambda^{<\kappa}=\lambda$ and fix a weakly compact set $W\subseteq P_\kappa\lambda$. For each $z\in W$ with $z\cap\kappa\in\kappa$ and $\ell(z\cap\kappa)\subseteq\ORD$ define $a_z=\{z(\beta)\st\beta<\ot(z)\land \ell(z\cap\kappa)(\beta)=1\}$ where $\ell(z\cap\kappa)(\beta)$ is the $\beta$-th element of $\ell(z\cap\kappa)$. Otherwise let $a_z$ be arbitrary. Let us argue that the set $E_X=\{z\in P_\kappa\lambda\st X\cap z=a_z\}$ is weakly compact. 

Fix a $1$-club $C\subseteq P_\kappa\lambda$. By Corollary \ref{corollary_filter_base}, it will suffice to show that $j"\lambda\in j(C)\cap j(E_X)$ where $j$ is a $\lambda$-supercompactness embedding.\footnote{Let us emphasize that, in this context $j"\lambda\in j(E_X)$ does not directly imply $E_X$ is weakly compact because $j$ is a supercompactness embedding, but the fact that $E_X$ is weakly compact follows from Corollary \ref{corollary_filter_base} or Corollary \ref{corollary_normal_measure_one_sets_are_weakly_compact}.} Take $j:V\to M$ to be a $\lambda$-supercompactness embedding with $j(\ell)(\kappa)=E_X$. We certainly have $j"\lambda\in j(C)$. Let $j(\<a_z\st z\in P_\kappa\lambda\>)=\<\bar{a}_z\st z\in j(P_\kappa\lambda)\>$. Since $\bar{a}_{j"\lambda}=\{j"\lambda(\beta)\st \beta<\lambda\land j(\ell)(\kappa)(\beta)=1\}=j"X=j(X)\cap j"\lambda$, it follows that $j"\lambda\in j(E_X)$.
\end{proof}

When arguing that ultrapower embeddings $j:V\to M$ by normal fine $\kappa$-complete measures on $P_\kappa\lambda$ can be extended to forcing extensions by an Easton-support iteration $\P$, one often uses a function $f:\kappa\to\kappa$ satisfying $j(f)(\kappa)>\lambda$ to ensure that the tail of $j(\P)$ will be sufficiently closed. The same is true when lifting elementary embeddings witnessing the weak compactness of subsets of $P_\kappa\lambda$.

\begin{definition}\label{definition_menas_property}
Suppose $P_\kappa\lambda$ is weakly compact. We say that a function $f:\kappa\to\kappa$ has the \emph{Menas property for weakly compact subsets of $P_\kappa\lambda$} if and only if for every weakly compact $W\subseteq P_\kappa\lambda$ and every $A\subseteq\lambda$ there is a transitive $M\models\ZFC^-$ of size $\lambda^{<\kappa}$ with $\lambda,A,W,f\in M$, a transitive $N$ and an elementary embedding $j:M\to N$ with critical point $\kappa$ such that $j(\kappa)>\lambda$, $j"\lambda\in j(W)$ and $j(f)(\kappa)>\lambda$.
\end{definition}

The proof of the following lemma is essentially the same as that of \cite[Lemma 3.3]{MR2989393}

\begin{lemma}
Suppose $P_\kappa\lambda$ is weakly compact. Then there is a function $f:\kappa\to\kappa$ with the Menas property for weakly compact subsets of $P_\kappa\lambda$.
\end{lemma}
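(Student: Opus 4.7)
The plan is to follow the recipe of \cite[Lemma 3.3]{MR2989393}, adapted to this two-cardinal setting. Define $f:\kappa\to\kappa$ by letting $f(\alpha)$ be the least cardinal $\gamma$ with $\alpha\le\gamma<\kappa$ such that $P_\alpha\gamma$ fails to be weakly compact (in the sense of Definition \ref{definition_weakly_compact} together with Remark \ref{remark_P_kappa_X}), if such a $\gamma$ exists; otherwise set $f(\alpha)=0$. This is a well-defined map from $\kappa$ to $\kappa$.

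To verify the Menas property, I would fix a weakly compact $W\subseteq P_\kappa\lambda$ and a set $A\subseteq\lambda$. The first preliminary step is to show that weak compactness of $P_\kappa\lambda$ propagates downward: for every cardinal $\mu$ with $\kappa\le\mu\le\lambda$, the set $P_\kappa\mu$ is weakly compact. This follows from the equivalence with $\Pi^1_1$-indescribability provided by Theorem \ref{theorem_characterizations}, together with a natural projection from $P_\kappa\lambda$ onto $P_\kappa\mu$. Using $\lambda^{<\kappa}=\lambda$, I would bundle $A$, $W$, $f$, and explicit witnesses to the weak compactness of each such $P_\kappa\mu$ into a single coding set $A^*\subseteq\lambda$. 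Next I would take a transitive $M\models\ZFC^-$ of size $\lambda^{<\kappa}$ with $M^{<\kappa}\cap V\subseteq M$ and $\lambda,A^*,W,f\in M$, and apply the weak compactness of $W$ to obtain $j:M\to N$ with $\crit(j)=\kappa$, $j(\kappa)>\lambda$ and $j"\lambda\in j(W)$.

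It then remains to verify $j(f)(\kappa)>\lambda$. By elementarity, $j(f)(\kappa)$ is the least cardinal $\gamma$ with $\kappa\le\gamma<j(\kappa)$ such that $P_\kappa\gamma$ is not weakly compact as computed inside $N$, defaulting to $0$ if no such $\gamma$ exists. The witnesses coded into $A^*$, together with the elementarity of $j$, ensure that $N$ sees $P_\kappa\mu$ as weakly compact for every $\mu$ with $\kappa\le\mu\le\lambda$, so the minimal such $\gamma$, if it exists at all, must strictly exceed $\lambda$.

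The main obstacle I foresee is ruling out the degenerate case $j(f)(\kappa)=0$, i.e., producing some $\gamma$ with $\lambda<\gamma<j(\kappa)$ for which $N$ sees $P_\kappa\gamma$ as not weakly compact. Here one exploits the smallness of $N$: since $|N|^V=\lambda^{<\kappa}=\lambda$ while $j(\kappa)$ is strictly larger than $\lambda$, the model $N$ cannot contain enough candidate transitive submodels and enough ultrapower-style embeddings to certify near $\gamma$-supercompactness of $\kappa$ for all $\gamma<j(\kappa)$. A standard counting/cofinality argument carried out inside $N$ then produces some cardinal $\gamma$ with $\lambda<\gamma<j(\kappa)$ such that $P_\kappa\gamma$ is not weakly compact in $N$, which forces $j(f)(\kappa)\le\gamma$ to be at least $\lambda^+$ but in any case strictly above $\lambda$. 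Combined with the previous paragraph this gives $j(f)(\kappa)>\lambda$ and completes the verification.
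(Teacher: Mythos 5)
You should first note that the paper itself does not write this argument out --- it only says the proof is ``essentially the same as that of Schanker's Lemma 3.3'' --- so your proposal has to stand on its own. Your choice of $f$ (least $\gamma$ with $P_\alpha\gamma$ not weakly compact) is the right starting point, but both halves of your verification have real gaps. For the half that is supposed to force any failure seen by $N$ above $\lambda$: a ``witness to the weak compactness of $P_\kappa\mu$'' must supply, for \emph{every} $A\subseteq\mu$, a suitable model and embedding, so it is an object of size $2^\mu>\lambda$ and cannot be bundled into a set $A^*\subseteq\lambda$. More importantly, even if $M$ contained such witnesses, elementarity of $j$ only converts $M$'s beliefs about $\kappa$ into $N$'s beliefs about $j(\kappa)$; it gives no control over whether $N\models$ ``$P_\kappa\mu$ is weakly compact,'' a statement that quantifies over all $A\in P(\mu)^N$ and requires the witnessing models and embeddings to live \emph{inside} $N$. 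Nothing in your construction puts them there.

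The second gap is the ``counting/cofinality argument'' used to rule out $j(f)(\kappa)=0$: no such argument exists. A transitive $\ZFC^-$ model of external cardinality $\lambda$ can perfectly well satisfy ``$\kappa$ is nearly $\gamma$-supercompact for every cardinal $\gamma$'' --- its internal witnesses are just internal objects, and the external size of $N$ places no constraint on its first-order theory (compare: a countable transitive model can believe in a supercompact cardinal). So counting in $V$ does not produce a $\gamma\in(\lambda,j(\kappa))$ at which $N$ sees a failure. The reduction your write-up never makes, and which is where the actual work lies, is this: since $\kappa=\kappa_{j"\lambda}$ and $\lambda=\ot(j"\lambda)$, the conclusion $j(f)(\kappa)>\lambda$ is literally equivalent to $j"\lambda\in j\bigl(\{x\in P_\kappa\lambda\st f(\kappa_x)>\ot(x)\}\bigr)$. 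One therefore needs to produce $f$ for which $W\cap\{x\st f(\kappa_x)>\ot(x)\}$ is weakly compact (equivalently, $\{x\st f(\kappa_x)\le\ot(x)\}\in\NWC_{\kappa,\lambda}$), and then use the freedom in Lemma \ref{lemma_basic_embedding_characterizations}(3) to build the generating $M$-ultrafilter so that it contains this set. Your proposal replaces this combinatorial step with direct control over the first-order theory of $N$ at its critical point, which is exactly what one cannot get for free; as it stands the argument does not go through.
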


\begin{theorem}
Suppose $W\subseteq P_\kappa\lambda$ is weakly compact and $\GCH$ holds. There is a cofinality-preserving forcing extension $V[G]$ in which $W$ is a weakly compact subset of $(P_\kappa\lambda)^{V[G]}$ and $\wcdiamond_{\kappa,\lambda}(W)$ holds.
\end{theorem}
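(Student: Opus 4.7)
The plan is to force with a natural ${<}\kappa$-closed, $\kappa^+$-c.c.\ poset $\mathbb{P}$ that adds a candidate sequence $\vec a=\<a_z\st z\in P_\kappa\lambda\>$ with $a_z\subseteq z$, and then to lift embeddings witnessing the weak compactness of $W$ in $V$ to $V[G]$ via a master condition argument that freely prescribes the generic value at the ``new'' coordinate $j"\lambda$.

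Let $\mathbb{P}$ consist of partial functions $p$ with $\dom(p)\in P_\kappa(P_\kappa\lambda)$ and $p(z)\subseteq z$ for every $z\in\dom(p)$, ordered by reverse inclusion. The ${<}\kappa$-closure is immediate from unions. Under $\GCH$ we have $|\mathbb{P}|=\lambda^{<\kappa}=\lambda$; a $\Delta$-system argument combined with the observation that, for any fixed root $r$ of size $<\kappa$, there are at most $\prod_{z\in r}2^{|z|}\leq\kappa$ possible restrictions $p\restrict r$, shows that $\mathbb{P}$ is $\kappa^+$-c.c. Hence all cardinals and cofinalities are preserved, and in $V[G]$ we may set $\vec a=\bigcup G$.

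Fix $A\subseteq\lambda$ in $V[G]$ and an arbitrary $1$-club $C\subseteq P_\kappa\lambda$ in $V[G]$, with names $\dot A,\dot C\in V$, and fix a Menas function $f:\kappa\to\kappa$ for weakly compact subsets of $P_\kappa\lambda$. Using Lemma \ref{lemma_basic_embedding_characterizations} in $V$, choose a transitive $M^*\models\ZFC^-$ of size $\lambda$ with $(M^*)^{<\kappa}\cap V\subseteq M^*$ and $\lambda,W,\mathbb{P},\dot A,\dot C,f\in M^*$, together with $j^*:M^*\to N^*$ of critical point $\kappa$ satisfying $j^*(\kappa)>\lambda$, $j^*"\lambda\in j^*(W)$, and $j^*(f)(\kappa)>\lambda$. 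The crucial observation is that $j^*"\lambda\notin j^*"\dom(p)$ for any $p\in\mathbb{P}$, because any element of $j^*"\dom(p)$ has the form $j^*(z)=j^*"z$ with $|z|<\kappa$, whereas $|j^*"\lambda|=\lambda$. Consequently, in $V[G]$ we may define the function
\[
q\;=\;\Bigl(\bigcup_{p\in G}j^*(p)\Bigr)\cup\bigl\{\langle j^*"\lambda,\;j^*"A\rangle\bigr\},
\]
which is a well-defined element of $j^*(\mathbb{P})$ (viewed in $N^*$) serving as a lower bound of $j^*"G$, with $q(j^*"\lambda)=j^*"A=j^*(A)\cap j^*"\lambda$. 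Build an $N^*$-generic filter $H\subseteq j^*(\mathbb{P})$ containing $q$ in $V[G]$ by diagonalizing against the $\leq\lambda$ many dense subsets of $j^*(\mathbb{P})$ lying in $N^*$, using ${<}j^*(\kappa)$-closure of $j^*(\mathbb{P})$ in $N^*$ together with the slack provided by $j^*(f)(\kappa)>\lambda$ to produce lower bounds at limit stages. This yields a lift $j:M^*[G]\to N^*[H]$ with $j(\vec a)_{j"\lambda}=j^*"A=j(A)\cap j"\lambda$, so $j"\lambda\in j(E_A)$ where $E_A=\{z\in W\st a_z=A\cap z\}$; we also have $j"\lambda\in j(W)$ automatically since $W\in V$.

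To verify $j"\lambda\in j(C)$ we apply $1$-closure of $j(C)$ inside $N^*[H]$: any function $h:P_\kappa(j"\lambda)\to P_\kappa(j"\lambda)$ in $N^*[H]$ transports via the restriction $j^*\restrict P_\kappa\lambda\in N^*$ (which exists by the analogue of Lemma \ref{lemma_normal_embedding}(4) for embeddings from Lemma \ref{lemma_basic_embedding_characterizations}) to a function $g:P_\kappa\lambda\to P_\kappa\lambda$ in $V[G]$ satisfying $z\in C_g$ iff $j^*"z\in C_h$. The $V[G]$-strong stationarity of $C$ then produces $z\in C\cap C_g$, and hence $j^*"z\in j(C)\cap C_h\cap P_\kappa(j"\lambda)$ in $N^*[H]$; this shows $j(C)\cap P_\kappa(j"\lambda)\in(\NSS_{\kappa,j"\lambda}^+)^{N^*[H]}$, so $j"\lambda\in j(C)$. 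Combining, $j"\lambda\in j(W\cap C\cap E_A)$, and by elementarity of $j$ we conclude $W\cap C\cap E_A\neq\emptyset$ in $V[G]$. Since $C$ was arbitrary, Theorem \ref{theorem_characterizations} gives $E_A\in\NWC_{\kappa,\lambda}^+$ in $V[G]$, yielding $\wcdiamond_{\kappa,\lambda}(W)$; the same argument with the $E_A$ factor dropped shows $W$ itself remains weakly compact in $V[G]$. The main obstacle is constructing $H$ in $V[G]$: although $j^*(\mathbb{P})$ is ${<}j^*(\kappa)$-closed in $N^*$, the diagonalization has length $\lambda\geq\kappa$ and $V[G]$ only inherits ${<}\kappa$-closure from $V$, so one must carefully keep the intermediate decreasing sequence inside $N^*$, which is precisely where the Menas property for $f$ and the size and closure of $N^*$ are used.
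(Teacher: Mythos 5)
There is a fatal gap at the master-condition step. You claim that
\[
q=\Bigl(\bigcup_{p\in G}j^*(p)\Bigr)\cup\bigl\{\langle j^*"\lambda,\;j^*"A\rangle\bigr\}
\]
``is a well-defined element of $j^*(\mathbb{P})$ (viewed in $N^*$).'' It is not: the conditions of the poset $j^*(\mathbb{P})$, as computed in the transitive model $N^*$, are elements of $N^*\subseteq V$, whereas $q$ is constructed from the $V$-generic filter $G$ and in fact encodes $G$ (apply $(j^*)^{-1}$ pointwise to recover $\bigcup G$, hence $G$), so $q\notin V$ and a fortiori $q\notin N^*$. Without a lower bound for $j^*"G$ inside $j^*(\mathbb{P})^{N^*}$ (or inside a generic extension of $N^*$ to which you have access), there is no way to guarantee an $N^*$-generic $H$ with $j^*"G\subseteq H$, and the lift does not exist. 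This is not a repairable detail of your setup: your $\mathbb{P}$ is essentially an $\Add(\kappa,\lambda)$-like forcing, and by Kunen's classical example a single unprepared Cohen-type forcing at $\kappa$ can outright destroy weak compactness, so the conclusion that $W$ remains weakly compact in $V[G]$ can fail for the poset as you defined it. Two smaller symptoms of the same problem: the Menas function $f$ does nothing in a single-step forcing (its entire purpose is to push the first nontrivial stage of $j(\P_\kappa)$ past $\lambda$ so that the \emph{tail of an iteration} is sufficiently closed), and the closure of $N^*$ that you invoke when diagonalizing holds only for sequences in $V$, not for sequences built in $V[G]$.

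The paper's proof is built precisely to avoid this obstruction: it forces with an Easton-support iteration $\P_{\kappa+1}=\P_\kappa*\dot\Q_\kappa$ whose stage-$\kappa$ component is $\Add(\kappa,1)$, reads the diamond sequence off the Cohen generic $h_\kappa$ via $a_z=\{z(\beta)\st h_\kappa(z\cap\kappa+\beta)=1\}$, and uses the factorization $j(\P_\kappa)\cong\P_\kappa*\dot\Q_\kappa*\dot\P'_{\kappa,j(\kappa)}$ so that $H_\kappa$ is literally absorbed into the generic for $j(\P_\kappa)$; consequently $h_\kappa\in N[\hat G_{j(\kappa)}]$ and the master condition $m\supseteq h_\kappa$ with $m(\kappa+\alpha)=X(\alpha)$ \emph{is} an element of the target model. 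Your idea of prescribing the value at the new coordinate $j"\lambda$ (here, at the ordinals in $[\kappa,\lambda)$) to encode $j(A)\cap j"\lambda$ is exactly the right instinct and matches the paper, but it only works after the preparation makes the ground-model generic visible to $N$. If you want to keep your poset $\mathbb{P}$, you would still need to precede it by an Easton iteration of the analogous posets below $\kappa$ and rerun the lifting argument through the iteration.
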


\begin{proof}
Let $f:\kappa\to\kappa$ be a function with the Menas property for weakly compact subsets of $P_\kappa\lambda$. Let $\P_{\kappa+1}=\<\P_\alpha,\dot{\Q}_\beta\st\alpha\leq\kappa+1,\beta\leq\kappa\>$ be the Easton-support iteration such that if $\gamma\leq\kappa$ is inaccessible and $f"\gamma\subseteq\gamma$ then $\dot{\Q}_\gamma$ is a $\P_\gamma$-name for the forcing to add a single Cohen subset to $\gamma$, and otherwise $\dot{\Q}_\gamma$ is a $\P_\gamma$-name for trivial forcing. Let $G_{\kappa+1}\cong G_\kappa*H_\kappa\subseteq \P_\kappa*\dot{\Q}_\kappa$ be generic over $V$ and let $h_\kappa=\bigcup H_\kappa:\kappa\to 2$. 

We will identify each $z\in P_\kappa\lambda$ with a function $z:\ot(z)\to\lambda$ enumerating its elements in increasing order; in other words, $z(\alpha)$ denotes the $\alpha$-th element of $z$ where $\alpha<\ot(z)$. For each $z\in P_\kappa\lambda$ with $z\cap\kappa\in \kappa$ we define $a_z=\{z(\beta)\st\beta<\ot(z)\land h_\kappa(z\cap\kappa+\beta)=1\}$ and let $\vec{a}=\<a_z\st z\in P_\kappa\lambda\>$. Standard arguments show that $\P_{\kappa+1}$ preserves cofinalities under $\GCH$, so it remains to show that, in $V[G_{\kappa+1}]$, $W$ remains weakly compact and that $\<a_z\st z\in P_\kappa\lambda\>$ is a weakly compact diamond sequence on $W$.

Fix $X\in P(\lambda)^{V[G_{\kappa+1}]}$. It will suffice to show that $E_X(W)=\{z\in W\st X\cap z=a_z\}\in\NWC_{\kappa,\lambda}^{V[G_{\kappa+1}]}$. Fix $A\in P(\lambda)^{V[G_{\kappa+1}]}$ and let $\dot{A},\dot{X},\dot{E}_{X}(W),\dot{h}_\kappa,\dot{\vec{a}}\in H_{\lambda^+}^V$ be $\P_{\kappa+1}$-names for the appropriate sets. We assume that $\dot{A}$ and $\dot{X}$ are nice names for subsets of $\lambda$. Working in $V$, let $M\models\ZFC^{-}$ be transitive of size $\lambda$ with $\lambda,\dot{A},\dot{X},\dot{E}_X(W),\dot{h}_\kappa,\dot{\vec{a}},\P_{\kappa+1},\ldots\in M$. Since $W$ is weakly compact in $V$ there is a $j:M\to N$ such that $\crit(j)=\kappa$, $j(\kappa)>\lambda$, $j(f)(\kappa)>\lambda$ and $j"\lambda\in j(W)$, as in Lemma \ref{lemma_normal_embedding}.

We now show that standard arguments allow us to lift $j$ to have domain $M[G_\kappa]$. Since $N^{<\lambda}\cap V\subseteq N$ we have $j(\P_\kappa)\cong \P_\kappa*\dot{\Q}_\kappa*\dot{\P}'_{\kappa,j(\kappa)}$ where $\dot{\P}'_{\kappa,j(\kappa)}$ is a $\P_\kappa*\dot{\Q}_\kappa$-name for the tail of the iteration $j(\P_\kappa)$ as defined in $N$. Since $j(f)(\kappa)>\lambda$, it follows that the next stage of nontrivial forcing in $j(\P_\kappa)$ after $\kappa$ occurs beyond $\lambda$. Thus, it follows that in $N[G_\kappa*H_\kappa]$, the forcing $\P'_{\kappa,j(\kappa)}=_{\defn}(\dot{\P}'_{\kappa,j(\kappa)})_{G_\kappa*H_\kappa}$ is ${<}\lambda$-closed. Since $|N[G_\kappa*H_\kappa]|^{V[G_\kappa*H_\kappa]}\leq\lambda$, the poset $\P_{\kappa,j(\kappa)}$ has at most $\lambda$-dense sets in $N[G_\kappa*H_\kappa]$. The model $N[G_\kappa*H_\kappa]$ is closed under ${<}\lambda$-sequences in $V[G_\kappa*H_\kappa]$ and thus, working in $V[G_\kappa*H_\kappa]$ we may build a filter $G'_{\kappa,j(\kappa)}\in V[G_\kappa*H_\kappa]$ which is generic for $\P'_{\kappa,j(\kappa)}$ over $N[G_\kappa*H_\kappa]$. Since the critical point of $j$ is $\kappa$ it follows that $j[G_\kappa]\subseteq G_\kappa*H_\kappa*G'_{\kappa,j(\kappa)}$ and thus $j$ lifts to $j:M[G_\kappa]\to N[\hat{G}_{j(\kappa)}]$ where $\hat{G}_{j(\kappa)}=G_\kappa*H_\kappa*G'_{j(\kappa)}$.

We define $m:\lambda\to 2$ by letting $m\restrict\kappa=h_\kappa$ and $m(\kappa+\alpha)=X(\alpha)$ for $\kappa+\alpha<\lambda$, where we are identifying $X$ with it's characteristic function; that is, $X(\xi)=1$ if and only if $\xi\in X$. Since $h_\kappa$ is clearly in $N[\hat{G}_{j(\kappa)}]$, to check that $m\in j(\Q_\kappa)$ it will suffice to show that $X\in N[\hat{G}_{j(\kappa)}]$. Since $\dot{X}$ is a nice $\P_{\kappa+1}$-name for a subset of $\lambda$, it follows that $\dot{X}=\bigcup_{\alpha<\lambda}\{\alpha\}\times A_\alpha$ where $A_\alpha$ is an antichain of $P_{\kappa+1}$ for each $\alpha<\lambda$. Since $j"\lambda\in N$ we have $j"\dot{X}=\bigcup_{\alpha<\lambda}\{j(\alpha)\}\times j"A_\alpha\in$, and since $j\restrict \P_{\kappa+1}\in M$, it follows that $\dot{X}\in M$. Hence we have $X=\dot{X}_{G_{\kappa+1}}\in N[\hat{G}_{j(\kappa)}]$.

Since $m$ is a condition in $j(\Q_\kappa)$ we may build a filter $\hat{H}_{j(\kappa)}\subseteq j(\Q_\kappa)$ with $m\in\hat{H}_{j(\kappa)}$ which is generic over $N[\hat{G}_{j(\kappa)}]$. Since $j"H_\kappa\subseteq \hat{H}_{j(\kappa)}$, we may lift $j$ to $j:M[G_{\kappa}*H_\kappa]\to N[\hat{G}_{j(\kappa)}*\hat{H}_{j(\kappa)}]$. 

Since $\vec{a}\in M[G_\kappa*H_\kappa]$ we may let $j(\vec{a})=\<\bar{a}_z\st z\in j(P_\kappa\lambda)\>$. 
Since $m\in \hat{H}_{j(\kappa)}$, it follows that $j(h_\kappa)(\kappa+\beta)=X(\beta)$ for all $\beta<\lambda$. By definition $a_z=\{z(\beta)\st\beta<\ot(z)\land h_\kappa(z\cap\kappa+\beta)=1\}$, thus by elementarity 
\begin{align*}
j(\vec{a})(j"\lambda)=\bar{a}_{j"\lambda}&=\{(j"\lambda)(\beta)\st\beta<\ot(j"\lambda)\land j(h_\kappa)(j"\lambda\cap j(\kappa)+\beta)=1\}\\
	&=\{j(\beta)\st\beta<\lambda\land j(h_\kappa)(\kappa+\beta)=1\}\\
	&=\{j(\beta)\st\beta<\lambda\land X(\beta)=1\}\\
	&=j(X)\cap j"\lambda.
\end{align*}
Thus $j"\lambda\in j(E_X(W))$.
\end{proof}

Standard arguments can be used to prove the following.

\begin{proposition}
$\wcdiamond_{\kappa,\lambda}(W)$ implies that $\NWC_{\kappa,\lambda}\restrict W$ is not $\lambda$-saturated.
\end{proposition}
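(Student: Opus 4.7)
The plan is to convert the diamond sequence directly into a large antichain in the quotient Boolean algebra $P(W)/(\NWC_{\kappa,\lambda}\restrict W)$. Fix a witnessing sequence $\langle a_z \st z\in P_\kappa\lambda\rangle$ for $\wcdiamond_{\kappa,\lambda}(W)$ and, for each $A\subseteq\lambda$, set
\[
S_A=\{z\in W\st a_z=A\cap z\}.
\]
By the definition of $\wcdiamond_{\kappa,\lambda}(W)$, every $S_A$ lies in $\NWC_{\kappa,\lambda}^+$, so each $S_A$ represents a nonzero element of the quotient $P(W)/(\NWC_{\kappa,\lambda}\restrict W)$.

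The key step is to verify that distinct parameters give incompatible classes, i.e.\ $S_A\cap S_B\in\NWC_{\kappa,\lambda}$ whenever $A\neq B$. Pick any $\alpha\in A\bigtriangleup B$. If $z\in S_A\cap S_B$ contained $\alpha$, then $A\cap z=a_z=B\cap z$ would force $\alpha$ to be on the same side of both $A$ and $B$, a contradiction; hence
\[
S_A\cap S_B\subseteq\{z\in P_\kappa\lambda\st \alpha\notin z\}.
\]
The right-hand side belongs to $\NWC_{\kappa,\lambda}$ because $\NWC_{\kappa,\lambda}$ is fine. This fineness is immediate from the embedding definition (any witnessing $j\colon M\to N$ with $\alpha\in\lambda\in M$ gives $j(\alpha)\in j"\lambda$), and it also follows from Corollary~\ref{corollary_NSS_contained_in_NWC} together with the fact that $\NSS_{\kappa,\lambda}$ extends the fine ideal $\NS_{\kappa,\lambda}$ once $\kappa$ is Mahlo, which holds since $P_\kappa\lambda$ is weakly compact.

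Combining these two points, the family $\{[S_A]\st A\subseteq\lambda\}$ is a collection of pairwise incompatible nonzero elements of $P(W)/(\NWC_{\kappa,\lambda}\restrict W)$; since $A\mapsto[S_A]$ is injective (two classes with product $0$ cannot both be a common nonzero element), this antichain has cardinality $2^\lambda\geq\lambda^+>\lambda$. Therefore $\NWC_{\kappa,\lambda}\restrict W$ is not $\lambda$-saturated. There is no real obstacle here; the only point that needs a line of care is the fineness of $\NWC_{\kappa,\lambda}$, which is why invoking either the embedding characterization or the inclusion $\NSS_{\kappa,\lambda}\subseteq\NWC_{\kappa,\lambda}$ is useful.
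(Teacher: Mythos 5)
Your proof is correct and is exactly the ``standard argument'' the paper alludes to without writing out: the sets $S_A$ for $A\subseteq\lambda$ are positive by the definition of $\wcdiamond_{\kappa,\lambda}(W)$, pairwise almost disjoint modulo the ideal via the fineness of $\NWC_{\kappa,\lambda}$, and hence form an antichain of size $2^\lambda>\lambda$ below $W$. Your justification of fineness (either directly from the embedding definition or via $\NS_{\kappa,\lambda}\subseteq\NSS_{\kappa,\lambda}\subseteq\NWC_{\kappa,\lambda}$) is the one point that needed checking, and you handled it correctly.
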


\subsection{Indescribable semimorasses}\label{section_semimorasses}

For cardinals $\kappa\leq\lambda$, a $(\kappa,\lambda)$-semimorass is a subset $\mu\subseteq P_\kappa\lambda$ which is well-founded with respect to $\subsetneq$ and satisfies certain homogeneity properties (see Definition \ref{definition_semimorass} below). The reader may consult \cite{MR1320107} or \cite{MR3696070} for some additional information and applications of semimorasses. If $\mu\subseteq P_\kappa\lambda$ and $x\in P_\kappa\lambda$, we define
\[\mu\restrict x=\{y\in \mu\st y\subsetneq x\}.\]

\begin{definition}\label{definition_semimorass}
Let $\kappa$ and $\lambda$ be cardinals with $\kappa\leq\lambda$. A \emph{$(\kappa,\lambda)$-semimorass} is a family $\mu\subseteq P_\kappa\lambda$ which satisfies the following properties.
\begin{enumerate}
\item $\mu$ is well-founded with respect to $\subsetneq$.
\item $\mu$ is \emph{locally small}, that is, for all $x\in\mu$, $|\mu\restrict x|<\kappa$.
\item $\mu$ is \emph{homogeneous}, that is, if $x,y\in\mu$ and $\rk(x)=\rk(y)$ then $x$ and $y$ have the same order type and $\mu\restrict y=\{f_{x,y}[z]\st z\in\mu\restrict x\}$, where $f_{x,y}:x\to y$ is the unique order-preserving isomorphism from $x$ to $y$.
\item $\mu$ is directed with respect to $\subseteq$, that is, for all $x,y\in \mu$ there is $z\in \mu$ such that $x,y\subseteq z$.
\item $\mu$ is \emph{locally semi-directed}, that is, for all $x\in\mu$ either
\begin{enumerate}
\item $\mu\restrict x$ is directed, or
\item there are $x_1,x_2\in\mu$ such that $\rk(x_1)=\rk(x_2)$ and $x=x_1*x_2$, that is, $x$ is the amalgamation of $x_1$ and $x_2$ with respect to $\mu$.
\end{enumerate}
\item $\mu$ \emph{covers} $\lambda$, that is, $\bigcup\mu=\lambda$.
\item $\mu$ has height $\kappa$.
\end{enumerate}
\end{definition}

Koszmider \cite{MR1320107} proved that if $\kappa^{<\kappa}=\kappa$ then there is a ${<}\kappa$-directed closed $\kappa^+$-c.c. poset $\K_{\kappa,\kappa^+}$ such that if $g\subseteq \K_{\kappa,\kappa^+}$ is a generic filter over $V$ then $\mu=\bigcup g$ is a stationary $(\kappa,\kappa^+)$-semimorass. Pereira \cite{MR3640048} used an Easton-support iteration of Koszmider forcings to prove that if $\kappa$ is $\kappa^+$-supercompact and $\GCH$ holds then there is a cofinality-preserving forcing extension $V[G]$ in which there is a normal fine $\kappa$-complete nonprincipal ultrafilter $U\in V[G]$ on $(P_\kappa\kappa^+)^{V[G]}$ which contains a $(\kappa,\kappa^+)$-semimorass $\mu\in U$. It is an easy consequence of Corollary \ref{corollary_normal_measure_one_sets_are_weakly_compact} and Pereira's result that if $\kappa$ is $\kappa^+$-supercompact and $\GCH$ holds then there is a cofinality-preserving forcing extension in which there is a $(\kappa,\kappa^+)$-semimorass $\mu\subseteq P_\kappa\kappa^+$ which is $\Pi^1_n$-indescribable for all $n<\omega$.

\begin{corollary}\label{corollary_weakly_compact_semimorass}
If $\kappa$ is $\kappa^+$-supercompact and $\GCH$ holds then there is a cofinality-preserving forcing extension $V[G]$ in which there is a $(\kappa,\kappa^+)$-semimorass $\mu\subseteq P_\kappa\lambda$ which is $\Pi^1_n$-indescribable for all $n<\omega$.
\end{corollary}

\section{Questions}\label{section_questions}

\subsection{Shooting $1$-clubs}\label{subsection_shooting_1_clubs}

In many ways $1$-club shooting forcings, and more generally $n$-club shooting forcings, are more well-behaved than club shooting. For example, Hellsten proved \cite{MR2026390} that if $W\subseteq\kappa$ is any weakly compact set and $\GCH$ holds then there is a cofinality-preserving forcing extension in which $W$ contains a $1$-club and all weakly compact subsets of $W$ are preserved, whereas the forcing to shoot a club through a given stationary set $S\subseteq\kappa$ may collapse cardinals unless $S$ contains the singular cardinals less than $\kappa$. Similarly, in \cite{CodyGitmanLambieHanson} it is shown that if $W\subseteq \kappa$ is any $\Pi^1_n$-indescribable set and $\GCH$ holds then there is a cofinality-preserving forcing extension in which $W$ contains an $n$-club and all $\Pi^1_n$-indescribable subsets of $W$ are preserved. Can these results be generalized to the two-cardinal context?
\begin{question}\label{question_1_club_shooting}
Suppose $W\subseteq P_\kappa\lambda$ is weakly compact and $\GCH$ holds. Is there a cofinality-preserving forcing extension in which $W$ contains a $1$-club and all weakly compact subsets of $W$ remains weakly compact?
\end{question}

The work of Gitik \cite{MR820120} seems to be relevant to answering Question \ref{question_1_club_shooting}, however this remains open. When attempting to answer Question \ref{question_1_club_shooting}, the author considered various Easton-support iterations, and in an attempt to build master conditions for such forcings, the author was led to the following related questions (Question \ref{question_weird_weakly_compact_set} and Question \ref{question_special_semimorass} below).

\begin{question}\label{question_weird_weakly_compact_set}
Is it consistent that there is a weakly compact set $W\subseteq P_\kappa\kappa^+$ which does not contain a $1$-club and 
\[\text{for all $x\in W$ if $y\subseteq x$ and $|y|\geq\kappa_x$ then $y\in W$?}\]
\end{question}

Koszmider (see \cite[Proposition 4.3]{MR3696070} or \cite[Proposition 10]{MR1320107}) has shown that if $\mu$ is a $(\kappa,\lambda)$-semimorass (see Definition \ref{definition_semimorass} above) then it satisfies the following non-reflection property:
\[\text{for every proper subset $X\subsetneq\lambda$ with $|X|\geq\kappa$ we have $\mu\cap P_\kappa X\in\NS_{\kappa,X}$.}\tag{K}\]
Combined with Corollary \ref{corollary_weakly_compact_semimorass}, this shows the consistency of the existence of a set $W\subseteq P_\kappa\kappa^+$ which is $\Pi^1_n$-indescribable for all $n<\omega$ and which satisfies Koszmider's non-reflection property (K). When attempting to build master conditions for various forcings, the author arrived at the following question concerning additional non-reflection properties of semimorasses.

\begin{question}\label{question_special_semimorass}
Is it consistent that there is a weakly compact set $W\subseteq P_\kappa\kappa^+$ such that $W$ is a \emph{special} $(\kappa,\kappa^+)$-semimorass, meaning that it is a $(\kappa,\kappa^+)$-semimorass and it satisfies the non-reflection property: for every proper subset $X\subsetneq\kappa^+$ with $|X|\geq\kappa$ we have $\mu\cap P_{\kappa_X}X\in\NS_{\kappa_X,X}$?
\end{question}

\subsection{The $P_\kappa\lambda$-weakly compact reflection principle}\label{subsection_reflection}

Schanker proved that if $\kappa$ is $\kappa^+$-supercompact cardinal and $\GCH$ holds then there is a forcing extension in which $\kappa$ remains nearly $\kappa^+$-supercompact and the $\GCH$ fails first at $\kappa$, hence $\kappa$ is not $\kappa^+$-supercompact or even measurable. Can we obtain a similar forcing result while preserving $\GCH$?

\begin{question}\label{question_GCH_and_near_supercompactness}
If $\kappa$ is $\kappa^+$-supercompact and $\GCH$ holds, is there a cofinality-preserving forcing extension in which $\kappa$ remains nearly $\kappa^+$-supercompact, $\GCH$ holds and $\kappa$ is not $\kappa^+$-supercompact? Phrased in our preferred terminology: if $P_\kappa\kappa^+$ is weakly compact and $\GCH$ holds, is there a cofinality-preserving forcing extension $V[G]$ in which $(P_\kappa\kappa^+)^{V[G]}$ is weakly compact, $\GCH$ holds and $\kappa$ is not $\kappa^+$-supercompact?
\end{question}

In order to address this question, let us use the following definition.

\begin{definition}
Suppose $\kappa$ is inaccessible and $X$ is a set of ordinals with $\kappa\leq|X|$ and $|X|^{<\kappa}=|X|$. We say that a set $W\subseteq P_\kappa X$ is \emph{weakly compact} if and only if it is $\Pi^1_1$-indescribable as a subset of $P_\kappa X$ via Definition \ref{definition_indescribable}. 
\end{definition}

Under the assumptions of the previous definition, the collection $\NWC_{\kappa,X}=\Pi^1_1(\kappa,X)$ is a strongly normal ideal on $P_\kappa X$ (see Section \ref{section_indescribability}). If $\kappa$ is $\kappa^+$-supercompact, then it follows that for every weakly compact $W\subseteq P_\kappa\kappa^+$ there is an $x\in P_\kappa\kappa^+$ such that $W\cap P_{\kappa_x}x\in\NWC_{\kappa_x,x}^+$.

\begin{definition}
For cardinals $\kappa\leq\lambda$, we say that $W\subseteq P_\kappa\lambda$ is a \emph{non-reflecting weakly compact set} if and only if $W$ is weakly compact and for all $x\in P_\kappa\lambda$ the set $W\cap P_{\kappa_x}x$ is not a weakly compact subset of $P_{\kappa_x}x$. We say that the  \emph{$P_\kappa\lambda$-weakly compact reflection principle} holds and write $\Refl_{\WC}(\kappa,\lambda)$ if and only if every weakly compact $W\subseteq P_\kappa\lambda$ reflects at some $x\in P_\kappa\lambda$.
\end{definition}

Hence, one could answer Question \ref{question_GCH_and_near_supercompactness} in the affirmative by showing that if $\kappa$ is $\kappa^+$-supercompact then there is a forcing extension $V[G]$ in which there is a non-reflecting weakly compact subset of $(P_\kappa\kappa^+)^{V[G]}$ and $(P_\kappa\kappa^+)^{V[G]}$ is weakly compact. However, it seems that subtle issues involved with building master conditions prevent one from using the usual forcing techniques.

\begin{question}
Suppose $P_\kappa\kappa^+$ is weakly compact and $\GCH$ holds. Is there a forcing extension $V[G]$ in which  $(P_\kappa\kappa^+)^{V[G]}$ is weakly compact, cofinalities are preserved and there is a weakly compact set $W\subseteq (P_\kappa\kappa^+)^{V[G]}$ such that for all $x\in (P_\kappa\kappa^+)^{V[G]}$ we have $(W\cap P_{\kappa_x}x\notin \NWC_{\kappa_x,x}^+)^{V[G]}$?
\end{question}


The proof of the following is a straightforward application of the fact that $\Pi^1_1$-indescribability can be expressed by a $\Pi^1_2$-sentence (see Lemma \ref{lemma_sentence_ind}).

\begin{proposition}\label{proposition_refl}
Suppose $\kappa\leq\lambda$ are cardinals with $\lambda^{<\kappa}=\lambda$ and $P_\kappa\lambda$ is $\Pi^1_2$-indescribable. Then $\Refl_{\WC}(\kappa,\lambda)$ holds.
\end{proposition}

\begin{question}
Is it consistent that $P_\kappa\lambda$ is weakly compact where $\kappa\leq\lambda$ and $\lambda^{<\kappa}=\lambda$, $\Refl_{\WC}(\kappa,\lambda)$ holds and $P_\kappa\lambda$ is not $\Pi^1_2$-indescribable?
\end{question}

\subsection{Alternative $1$-clubs}\label{subsection_1_clubs}

\begin{question}
One can formulate a notion of $1$-club subset of $P_\kappa\lambda$ using Jech's $\NS_{\kappa,\lambda}$ instead of $\NSS_{\kappa,\lambda}$. In other words, we define $C\subseteq P_\kappa\lambda$ to be Jech-$1$-club if and only if $C\in\NS_{\kappa,\lambda}^+$ and whenever $x\in P_\kappa\lambda$ is such that $\kappa_x$ is inaccessible and $C\cap P_{\kappa_x}x\in\NS_{\kappa_x,x}^+$ we have $x\in C$. What is the relationship between $1$-club and $1'$-club subsets of $P_\kappa\lambda$ when $\kappa$ is Mahlo?
\end{question}


\end{document}